\documentclass[9pt]{amsart}

\usepackage{graphicx}

\def\R{{\mathbb R}}

\def\N{{\mathbb N}}

\def\Z{{\mathbb Z}}
\def\1{{1\!\!\!1}}
\def\Id{{\mathbb I}}

\def\E{{\mathbb E}}
\def\eps{{\epsilon}}
\def\P{{\mathbb P}}

\def\cal{\mathcal}

\def\supp{{\rm{supp}}}
\def\eps{\varepsilon}

\newcommand{\be}{\begin{equation}}
\newcommand{\ee}{\end{equation}}
\setcounter{tocdepth}{1}
\numberwithin{equation}{section}

\newtheorem{theorem}{Theorem}
\newtheorem{prop}{Proposition}[section]
\newtheorem{cor}{Corollary}[section]
\newtheorem{defi}{Definition}[section]
\newtheorem{lemma}{Lemma}[section]

\title{Harmonic functions of random walks in a semigroup via ladder heights.}
\author{Irina Ignatiouk-Robert}
\address{
{Universit\'e de Cergy-Pontoise,}
{D\'epartement de math\'ematiques,}
{2, Avenue Adolphe Chauvin,}
{95302 Cergy-Pontoise Cedex,}
{France}}
\date{\today}
\email{Irina.Ignatiouk@u-cergy.fr}
\keywords{Harmonic function, random walk, exit time, renewal function} 
\subjclass{60J45, 31C05, 60J10}

\date{Received: date / Accepted: date}

\begin{document}
\begin{abstract} We investigate  harmonic functions and the  convergence of the sequence of ratios $(\P_x(\tau_\vartheta {>} n)/\P_e(\tau_\vartheta {>} n))$  for a random walk on a countable group killed up on the time $\tau_\vartheta$ of the first exit from some semi-group with an identity element $e$. Several results  of  classical renewal theory  for one dimensional random walk killed  at the first exit from the positive half-line are extended to a multi-dimensional setting. For this purpose, an analogue of the ladder height process and the corresponding renewal function $V$ are  introduced.  The results are applied to multidimensional  random walks $(X(t))$ killed upon the times of first exit from a convex cone. Our approach combines large deviation estimates  and an extension of Choquet-Deny theory.
\end{abstract}
\maketitle

\section{Introduction}

An explicit description of all harmonic functions for a transient countable Markov chain is in general a non trivial problem.  Markov chains associated to  homogeneous random walks are one of the few examples when  such a complete description   can be obtained. This is done via the classical method of  Choquet-Deny theory,  see for example Sawyer~\cite{Sawyer}. 

For a non homogeneous transient Markov chain with a transition kernel $P$, the method of  Choquet-Deny theory does not work. To find all harmonic functions one has  either to solve the equation $Ph{=}h$ in a straightforward way via analytical methods, or to investigate the Martin boundary of the process, by identifying all possible limits of the Martin kernel. The classical references for the second approach are  Doob~\cite{Doob}, Hunt~\cite{Hunt} and Dynkin~\cite{Dynkin:01}. Both approaches are non-trivial and there are also few examples when all harmonic functions can be found. 

A simple but important example of a non-homogeneous Markov chain is a random walk on some group $({\cal E},\star)$ killed up on the time $\tau_\vartheta$ of the first exit from some semi-group $E{\subset}{\cal E}$. The harmonic functions of such a random walk are related to a random walk conditioned on the event $\{\tau_\vartheta {>}n\}$ when $n\to\infty$, i.e.   to stay in $E$: if the sequence of ratios $(\P_x(\tau_\vartheta {>} n)/ \P_e(\tau_\vartheta{>}n))$ converges to some harmonic function $(h(x))$, the corresponding conditioned random walk is the $h$-transform of the original killed random walk. Random walks conditioned to stay in cones have been investigated in a large number of references, see for instance  Bertoin and Doney~\cite{Bertoin-Doney} and  Denisov and Wachtel~\cite{Denisov-Wachtel} and the references therein.  

For one{-}dimensional random walks with a non-negative drift, killed up on  the time of the first exit from a half line, the harmonic functions and the corresponding random walk conditioned to stay in the half line can be represented in terms of the renewal function based on the ladder heights by using  the Wiener-Hopf factorization.  The method of the Wiener-Hopf factorization is a quite powerful  technique. It   does not require  any moment conditions  in the one{-}dimensional case, see for instance Spitzer~\cite{Spitzer}, Feller~\cite{Feller} and  Bertoin and Doney~\cite{ Bertoin-Doney}.  For multidimensional random walks,  a similar  approach seems to be unlikely because there is, up to now,  no equivalent of  a convenient Wiener-Hopf factorization. There are nevertheless some results in this domain,  Mogulskii and Pecherskii in~\cite{Mogulskii-Pecherskii} and Greenwood and Shaked~\cite{Greenwood:1, Greenwood:2} where some factorization relations are established  to get the asymptotic behavior of some  first passage times.  It is not clear however how these factorization relations can be used to investigate the harmonic functions and  the corresponding random walk conditioned to stay in a cone. 

We review briefly several approaches used to identify the set of  harmonic functions and to investigate the limit of the sequence $
(\P_x(\tau_\vartheta {>} n)/ \P_e(\tau_\vartheta{>}n))$ 
for multidimensional random walks.

To identify the Martin boundary for integer-valued two-dimensional random walks  when the size of the jumps of each coordinate is at most $1$,  complex analysis methods on elliptic curves has been proposed by Kurkova and Malyshev~\cite{Kurkova-Malyshev},  Kurkova and Raschel~\cite{Kurkova-Raschel} and Raschel~\cite{Raschel:1}.  This approach does not seem to have an extension in higher dimensions or for random walks with unbounded jumps.  

A large deviations approach has been proposed in Ignatiouk and Loree~\cite{Ignatiouk-Loree}  to identify the Martin boundary of two{-}dimensional, non centered, random walks killed up on  the first exit from the positive quadrant $\N\times\N$. For centered random walks, such a method does not seem to work. Moreover, to apply such a method for a non centered random walk in $\Z_+^d$,  one has to identify the harmonic functions for centered random walks in $\Z_+^{k}$ for any $k {<} d$, see Ignatiouk~\cite{Ignatiouk:08}.

Recently,  Bouaziz et al.~\cite{Sami} proved the uniqueness of the positive harmonic function, up to a multiplicative constant,  for a centered random walk killed up on the time of the first exit from $\Z_+^d$. The result has been obtained  under the following conditions: the jumps of the random walk must be bounded and their distribution has to  satisfy some ellipticity condition. 
A large number of related results were obtained for random walks in Weyl chambers, see for instance  Gessel and Zeilberger~\cite{Gessel}, Grabiner and Magyar~\cite{Grabiner}, Koenig and Schmid~\cite{Koenig} and Raschel~\cite{Raschel:2, Raschel:3}, and the references therein.  These results  concern some specific  random walks, when the steps of the process cannot jump over the boundary of the chamber and  when the set of all possible steps is invariant with respect to some reflections.

For centered random walks killed up on the time of the first exit from some general cone, under some moment condition on the jumps,  a harmonic function has been constructed and the asymptotic of the tail distribution of the exit time has been obtained by Denisov and Wachtel~\cite{Denisov-Wachtel}, and in a recent paper of Raschel and Tarrago \cite{Raschel:4} the asymptotic behavior of the Green function was obtained and a uniqueness (up to a multiplicative constant) of a positive harmonic function was proved. For non{-}centered random walks satisfying a Cramer condition the results of  Denisov and Wachtel~\cite{Denisov-Wachtel} have been  extended by Duraj~\cite{Duraj}. These results use the diffusion approximation of random walks and seem to be impossible to extend for random walks with heavy tailed jumps.

The purpose of the present paper is to extend the notions of  the ladder heights and of the corresponding renewal function to a multi-dimensional setting, and to obtain  results similar to those in the classical fluctuation theory for one dimensional random walks without any assumption on the tail distributions of the jumps.  For this we combine the large deviation estimates  with a modified method  of Choquet-Deny theory. 

We consider a substochastic transient random walk on a countable semi-group $(E,{\star})$ with  identity element $e$. The transition probabilities of our random walk are assumed to satisfy the inequalities $p(x{\star} u, y{\star} u) {\geq} p(x,y)$ for all $x$, $y$, $u{\in}E$. A homogeneous random walk on $\Z^d$ killed up on the time of the first exit from some convex cone with a vertex at the origin $0$ is therefore a particular case. 

To define the ladder height process and the corresponding renewal function $V$,  an additional absorbing state $\vartheta$ is added so that our random walk $(X(t))$ has a stochastic kernel,   $\tau_\vartheta$ is defined as the hitting time of  $\vartheta$. For  a random walk on $\Z^d$ killed up on the time of the first exit from the cone, this additional state $\vartheta$ is in fact identified with the points of $\Z^d$ outside of the cone and $\tau_\vartheta$ represents the time of the first exit from the cone. When the transition probabilities of our random walk satisfy the identities$p(x{\star} u, y{\star} u) {=} p(x,y)$ for all $x$, $y$, $u{\in}E$, the ladder height process $(H_n)$ has a simple probabilistic interpretation: for a sequence of stopping times $(t_n)$ with $t_0{=} 0$ and  $t_n$ defined as the first time after $t_{n-1}$ when the random walk $(X(t))$ exits from the set $E{\star}X(t_{n-1})$, in distribution $H_n{=}X(t_n)$ if $t_n {<}{+}\infty$ and $H_n{=}\vartheta$ otherwise. In the general case, the ladder height process is defined by positive operators represented in terms of the differences $p(x{\star} u, y {\star} u){-} p(x,y)$.

For a random walk in $\Z$, killed up on  the first exit from $\Z_+$, i.e. when $({\cal E},{\star}){=} (\Z, {+})$, our definition of the ladder height process $(H_n)$ coincides with the classical definition. However, in contrast to the ladder height process for a one dimensional random walk, for a multidimensional random walk, the ladder height process is no longer  decreasing. This is the main technical difficulty in our approach. 

The renewal function $V$ corresponding to the ladder height process is defined then by $V(x){=}\E_x({\cal T}_\vartheta)$ where ${\cal T}_\vartheta$ denotes the first time when the process $(H_n)$ hits the absorbing state $\vartheta$. When the stopping time $\tau_\vartheta$ is not integrable  we prove that the renewal function $V$  is harmonic for the random walk $(X(t))$, and that the sequence of functions 
\be\label{eq0-00}
\P_x(\tau_\vartheta > n)/\P_e(\tau_\vartheta > 0),  \quad x\in E. 
\ee
converges to $V$. 
When the  stopping time $\tau_\vartheta$  is integrable, an analogue of the Wald identity 
\[
\E_x(\tau_\vartheta) = V(x) \E_e(\tau_\vartheta), \quad \forall x\in E, 
\]
is obtained, and we show that 
\[
V(x) ~\leq~ \liminf_{n\to\infty} \P_x(\tau_\vartheta > n)/\P_e(\tau_\vartheta > n).
\]
The harmonic functions are investigated in a particular case, when the hitting probabilities of the random walk $(X(t))$ are slowly varying, i.e. when for any $u\in E$ and uniformly on $x\in E$,
\begin{align*}
\lim_{n\to\infty} \frac{1}{n} &\log \P_x(X(t) = x \star u^{\star n}, \; \text{for some $t\geq 1$}) = \\
&\quad \lim_{n\to\infty} \frac{1}{n} \log \P_{x\star u^{\star n}} (X(t) = x, \; \text{for some $t\geq 1$}) ~=~ 0,
\end{align*} 
by using a modified method of Choquet-Deny theory. 

In the classical homogeneous setting the main idea of the Choquet-Deny theory is the following: to identify the set of  harmonic functions one has to determine the {\em minimal} harmonic functions. A harmonic function $h$ is minimal if, for any  other harmonic function $\tilde{h}$, the inequality $\tilde{h}{\leq}h$ implies that $\tilde{h}{=}ch$ for some constant $c$. If a harmonic function $h$ is minimal for an irreducible homogeneous random walk on  a group $({\cal E},\star)$, then for any $u{\in}{\cal E}$, the function $\tilde{h}(x){=} h(x{\star} u)$ is harmonic and by Harnack's inequality,  satisfies the inequality $\delta_u\tilde{h}{\leq}h$ for some constant $\delta_u{>}0$. Hence, for any $x{\in} E$, $h(x{\star}u){=}\gamma_uh(x)$ for some constant $\gamma_u{>} 0$. The last relation gives a characterization of all minimal harmonic functions. For a homogeneous random walk in $\Z^d$ these are exponential functions.

In our setting,  Choquet-Deny method is modified in the following way:  for  any minimal harmonic function $h$ and  $u{\in}E$, we show that the function $x{\mapsto}h(x{\star}u)$ is super-harmonic  and its harmonic component,  in the Riesz decomposition, is given by  $\gamma_u h$ for some constant $\gamma_u{>}0$. Assuming that the hitting probabilities of the random walk are slowly varying, we prove that $\gamma_u{=}1$. In this way, we get some functional relations proving that any harmonic function $h$ of $(X(t))$ is superharmonic for the ladder height process  $(H_n)$ with a  potential, in the Riesz decomposition, component given by  $h(e)V$ and a harmonic component $\tilde{h}$ given by 
\[
\tilde{h}(x) = \lim_{n\to\infty} \E_x(h(H_n), \; {\cal T}_\vartheta >n), \quad x\in E.  
\]
Using this result together with the results obtained for the renewal function $V$, we conclude that any harmonic function $h$ of $(X(t))$ is 
\begin{itemize}
\item[--]  proportional to the renewal function $V$ if the exit time $\tau_\vartheta$ is not integrable;
\item[--] bounded below by $h(e) V$, so that the difference $h{-}h(0)V$ is a non trivial harmonic function of the ladder height process $(H_n)$, otherwise. 
\end{itemize}  

Our  results are applied next for a random walk $(X(t))$ on $\Z^d$ killed up on the time of the first exit from the closure of some open convex cone having a vertex at $0$. We show that for  a centered irreducible random walk, or more generally, if generating function of the distribution of the jump of the random walk achieves its strict minimum  at the origin $0\in\R^d$, the hitting probabilities are then slowly varying. This result is obtained by using large deviation estimates for truncated random walks. 

The functional relations obtained for the harmonic functions provide a probabilistic interpretation of the values $h(x)/h(x{\star}u)$ in terms of the corresponding $h${-}transform: if the transition probabilities of the random walk $(X(t))$  satisfy the identity $p(x{\star}u, y{\star}u) {=} p(x,y)$ for all $x$, $y$, $u{\in}E$ and the hitting probabilities are slowly varying, then for any harmonic function $h$ and $x$, $u{\in}E$, the probability that the $h${-}transform process starting at $x{\star}u$ never exits from the set $E{\star}u$ is equal to $h(x)/h(x{\star}u)$. This  property could be of interest in view of applications to random walks in $\Z^d$ conditioned to stay in some convex cone. Denisov and Wachtel~\cite{Denisov-Wachtel} have proved that, for a centered random walk in a general cone, under some moment conditions on the distribution of the jumps, the sequence of ratios~\eqref{eq0-00} converges  to some harmonic function ${\cal V}$. The corresponding random walk conditioned on the sequence of events $(\{\tau_\vartheta{>}n\})$ to stay in the cone, is in this case the ${\cal V}$-transform of the original random walk $(X(t))$. 

Remark finally that,  in higher dimensions, a new phenomenon occurs: in contrast to a centered one dimensional random walk, the exit time $\tau_\vartheta$ can be integrable and, in this case, the limiting harmonic function $${\cal V}{=} \lim_n\P_\cdot(\tau_\vartheta{>} n)/\P_0(\tau_\vartheta{>} n)$$ obtained by Denisov and Wachtel~\cite{Denisov-Wachtel} is not equal to the renewal function $V$.

\section{Preliminaries.} 
Before formulating our  results we recall well known results in fluctuation theory for one dimensional random  walks. The classical references are here the books of Spitzer~\cite{Spitzer} and Feller~\cite{Feller}. 

Denote by $(S(t))$  a homogeneous irreducible and aperiodic random walk  on $\Z$ and let $(X(t))$ be a copy of $(S(t))$ killed up on the first exit from $\Z_+ = \{k\in\Z: k \geq 0\}$. The strict descending ladder height process $(H_k,t_k)$ for the random walk $(S(t))$ is defined by   
\be\label{eq1-0}
t_0 = 0, \quad t_{k+1} = \inf\{n > t_k :  S(n) <  S(t_k) \}  \quad \text{and} \quad H_k =   S(t_k), \quad k\geq 0.
\ee
$(H_k)$ is therefore a substochastic random walk on $\Z$ with transition probabilities $p_H(x,y) = \P_0( S(\tau) = y-x)$ where $\tau =\inf\{ t > 0 : S(t) < 0\}$. The random walk $(H_k)$ is stochastic if $\P_0$-almost surely $\tau < \infty$. If  $\P_0(\tau = \infty) > 0$ it is convenient to introduce an absorbing state $\vartheta$ for $(H_k)$ by letting 
\[
p_H(x,\vartheta) ~=~ 1 - \sum_{y\in \Z_+} p_H(x,y), \quad x\in \Z_+.
\]
The renewal function $V$ associated with $(H_k)$ is then defined by  
\[
V(x) ~=~ \sum_{k=0}^\infty \P_x(H_k \in \Z_+), \quad x\in\Z_+
\]
or equivalently, 
\[
V(x) ~=~ \E_x({\cal T}), \quad x\in\Z_+.
\]
where $\cal{T}$ denotes the first time when the Markov chain $H_n$ exits from the set $\Z_+$: 
\[
\cal{T} = \inf\{n > 0 : H_n \not\in\Z_+\} ~=~\inf\{n>0 : H_n\in \{\vartheta\}\cup \Z_-\}, \quad \Z_- = \Z\setminus\Z_+. 
\]
If $\P_0(\tau = +\infty) > 0$, i.e. when  the random walk $(S(t))$ drifts to $+\infty$, the function $V$ satisfies the equality 
\be\label{e0-1}
V(x)  ~=~  \P_x(\tau = +\infty)/\P_0(\tau = +\infty), \quad \forall x\in\Z_+.
\ee
If the reflected random walk $(-S(t))$ drifts to $+\infty$, i.e. when $(S(t))$ drifts to $-\infty$, the function $x\to \E_x(\tau)$ is finite everywhere on $\Z$ and 
\be\label{e0-2}
V(x) ~=~ {\E_x(\tau)}/{\E_0(\tau)}, \quad \forall x\in\Z_+.
\ee
Finally, the random walk $(S(t))$ is called oscillating if it  neither drifts to $+\infty$ nor to $+\infty$. In this case, $\E_x(\tau) = +\infty$ and  
\be\label{e0-3}
\lim_{n\to\infty} ~\P_x(\tau > n)/\P_0(\tau>n) = V(x), \quad \forall x\in\Z_+.
\ee
Moreover, if a random walk $(S(t))$ either drifts to $+\infty$ or oscillates, the function $V$ is harmonic  for the killed random walk $(X(t))$ and any harmonic function of $(X(t))$ is proportional to $V$. 

The purpose of the present paper is to extend the notion of the  ladder height process and to obtain similar results for  random walks in semi-groups and in particular, for centered random walks in $\Z^d$  killed up on the first exit from some convex cone. 

\section{Main results} 
We begin our analysis with an application of the Choquet-Deny theory for  a  substochastic Markov chain $(X(t))$ on a countable set $E$  with transition probabilities $p(x,y)$, $x,y\in E$. The  Markov chain $(X(t))$ is assumed to  satisfy  following conditions:

\medskip
{\em 
\begin{enumerate}
\item[(A0)] $(X(t))$ is irreducible and transient on $E$,
\item[(A1)]  the  set of states $E$ of $(X(t))$ is included to some group $({\cal E}, \star)$;  
\item[(A2)]  there is $u\in E$ such that 
\[
E \star u\subset E,
\] 
and for any $x,y\in E$, 
\[
p(x\star u, y \star u) ~\geq~ p(x,y);   
\]
\end{enumerate}
} 
\medskip

The Markov chain $(X(t))$ being substochastic, it is convenient to introduce an additional absorbing state $\vartheta$ by letting
\[
p(x, \vartheta) = 1 - \sum_{y\in E} p(x,y), \quad x\in E. 
\]
$(X(t))$  is then a Markov chain on $E \cup\{\vartheta\}$ stopped when hitting the state $\vartheta$. We denote 
\[
\tau_\vartheta ~=~\inf\{t \geq 1 : X(t) =\vartheta \}. 
\] 
The Green function $G(x,y)$ of the Markov chain  $(X(t))$ is  defined by 
\[
G(x,y) ~=~ \sum_{t=0}^\infty \P_x( X(t) = y) ~=~ \sum_{t=0}^\infty \P_x( X(t) = y, \; t < \tau_\vartheta), \quad x,y\in E,
\]
and the hitting probabilities are denoted by 
\[
Q(x,y) ~=~ \P_{x}\bigl( X(t) = y \; \text{ for some } \; 0 < t < +\infty \bigr), \quad x,y\in E.
\]
For  a non-negative function $\varphi : E \to \R_+$, we let
\[
G\varphi(x) ~=~ \sum_{y\in E} G(x,y)\varphi(y), \quad x\in E, 
\]
and 
\[
P\varphi(x) ~=~ \E_x(\varphi(X(1))) ~=~ \sum_{y\in E} p(x,y) \varphi(y), \quad x\in E,
\]
It is convenient moreover to introduce two operators $\varphi \to T_u\varphi$ and $\varphi \to A_u\varphi$ on the set of non-negative functions $\{\varphi : E \to \R_+\}$, by letting 
\[
T_{u}\varphi (x) ~=~ \varphi(x \star u), \quad x\in E, 
\]
and 
\be\label{e1-2p} 
A_{u} \varphi(x) ~=~  \sum_{y\in E} a_{u}(x,y)\varphi(y), \quad x\in E, 
\ee
with  
\[
a_{u}(x,y) = \begin{cases}p(x\star u, y) - p(x,y\star u^{-1}), &\text{if $y \in E \star u$,} \\
p(x\star u, y) &\text{otherwise.} 
\end{cases} 
\]
Remark that under the hypotheses (A1) and (A2),  $a_{u}(x,y) \geq 0$ for all $x,y\in E$. For  any non-negative function $\varphi : E \to \R_+$, the function $A_{u}\varphi : E \to \R_+\cup\{+\infty\}$ is therefore well defined and  
\[
T_u P\varphi  =  PT_u\varphi + A_u \varphi. 
\]
Recall that for a  Markov chain $(X(t))$, a  non-zero positive function $h: E \to \R_+$ is called harmonic (respectively super harmonic) if $
P h ~=~ h$ (respectively $Ph \leq h$). A  function $g : E \to \R_+$ is called potential for $(X(t))$ if $g = G\varphi$ with some non-negative function $\varphi : E \to \R_+$. Such a  function $\varphi : E \to \R_+$ satisfies then the following relation 
\[
\varphi =  g - P g.
\]
Any potential function is super harmonic, and by the Riesz decomposition theorem, any super harmonic function $f$ is equal to a sum of a harmonic function $h = \lim_n P^n f$ and a potential  function $g = G\varphi$ with $\varphi = (\Id-P) f$, see for instance Woess~\cite{Woess}. 

We  extend any harmonic or super harmonic function $h$ on $E\cup\{\vartheta\}$ by letting 
\[
h(\vartheta) = 0. 
\]
$\1$ denotes throughout this paper the identity constant function, $\1(x) = 1$ for all $x\in E$, and $\Id$ denotes the identity operator : $\Id\varphi = \varphi$ for any function $\varphi:E \to \R$. 

\begin{defi} 
We will say that the hitting probabilities of the random walk $(X(t))$ are slowly varying along the element $u\in E$ if uniformly on $x\in E$, 
\be\label{equation1-2}
\lim_{n\to\infty} \displaystyle{\frac{1}{n}} \log Q(x, x \star u^{\star n}) ~=~ \lim_{n\to\infty} \displaystyle{\frac{1}{n}} \log Q(x \star u^{\star n}, x) = 0,
\ee 
where $u^{\star 1} = u$ and $u^{\star(n+1)} = u^{\star n}\star u$ for $n\geq 1$.
\end{defi} 
Our first result is the following statement.

\medskip 
\begin{theorem}\label{th1} Suppose that (A0)-(A2) are satisfied and let for the given $u\in E$, the hitting probabilities of $(X(t))$ be slowly varying along $u$. 
Then  any harmonic function $h$ satisfies the  following relations 
\be\label{e1-2}
h(y\star u) ~=~ h(y) + G A_u h(y), \quad \forall y\in E. 
\ee
\end{theorem}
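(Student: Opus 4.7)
The definition of $A_u$ encodes the commutation identity $T_u P = P T_u + A_u$, which combined with the harmonicity $Ph=h$ gives
\[
T_u h \;=\; P T_u h + A_u h, \qquad A_u h \ge 0.
\]
Thus $T_u h$ is super-harmonic with charge $A_u h$, and the Riesz decomposition theorem yields $T_u h = \tilde h + G(A_u h)$ where $\tilde h = \lim_n P^n T_u h$ is a non-negative harmonic function. The theorem therefore amounts to proving the equality $\tilde h = h$.

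Following the modified Choquet--Deny strategy advertised in the introduction, I would first reduce to the case when $h$ is a \emph{minimal} harmonic function by writing a general $h$ as a Choquet integral over the minimal elements and exchanging the integral with the operators $T_u$, $P^n$, $G$ and $A_u$ (justified by monotone convergence). For minimal $h$, one exploits the pointwise inequality $\tilde h \le T_u h$: a Harnack-type argument combined with the defining property of minimality forces the harmonic function $\tilde h$ to be a scalar multiple of $h$, say $\tilde h = \gamma_u h$ with $\gamma_u > 0$. This yields the functional equation $h(y\star u) = \gamma_u h(y) + G A_u h(y)$, and iterating it (together with the uniqueness of the Riesz decomposition applied at composite shifts) gives the multiplicative relation $\gamma_{u^{\star n}} = \gamma_u^{\,n}$.

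It remains to prove $\gamma_u = 1$; this is where the slowly varying hypothesis is essential. Iterating the functional equation yields $\gamma_u^{\,n} h(y) \le h(y\star u^{\star n})$, and combining this with the Harnack upper bound $h(y\star u^{\star n})/h(y) \le 1/Q(y, y\star u^{\star n})$ and the assumption $(Q(y, y\star u^{\star n}))^{1/n}\to 1$ gives $\gamma_u \le 1$ almost immediately. The reverse inequality $\gamma_u \ge 1$ is the main obstacle of the proof: it requires the second half of the slowly varying hypothesis, namely $(Q(y\star u^{\star n}, y))^{1/n}\to 1$, together with the Harnack lower bound $h(y\star u^{\star n}) \ge Q(y\star u^{\star n}, y)\,h(y)$ and a careful control of the size of the potential term $h(y\star u^{\star n}) - \gamma_u^{\,n} h(y)$ in the iterated identity. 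The multiplicativity $\gamma_{u^{\star n}} = \gamma_u^{\,n}$ is what converts such sub-exponential one-step estimates into the desired lower bound on $\gamma_u$. Once $\gamma_u=1$ is established for each minimal $h$, the identity $h(y\star u) = h(y) + GA_u h(y)$ extends to an arbitrary harmonic $h$ by integrating the Choquet representation.
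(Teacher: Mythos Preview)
Your outline matches the paper's proof almost exactly: reduce to minimal $h$ via the Poisson--Martin representation, use the commutation $T_uP=PT_u+A_u$ and Riesz decomposition to get $T_{u^{\star n}}h=\gamma^n h+GA_{u^{\star n}}h$ (after checking that $T_u$ preserves potentials, which is needed for the iteration), and then pin down $\gamma=1$ from the slowly varying hypothesis. One technical point you glossed over is that applying minimality requires a bound $\tilde h\le C\,h$, not just $\tilde h\le T_u h$; the paper first extracts from the slowly varying assumption a uniform lower bound $Q(x,x\star u)\ge\delta>0$, whence $T_u h\le \delta^{-1}h$ by Harnack.

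The one place where your plan is genuinely vague is the lower bound $\gamma\ge 1$. ``Careful control of the size of the potential term'' suggests you intend to bound $h(y\star u^{\star n})-\gamma^n h(y)$ from above, but there is no direct way to do this: that potential term can be large. The paper's device is cleaner and avoids any such estimate. From $Q(x\star u^{\star n},x)\ge(1-\varepsilon)^n$ (for $n\ge n_\varepsilon$) and Harnack one gets $T_{u^{\star n}}h-(1-\varepsilon)^n h\ge 0$; this difference is again super-harmonic, so it has its own Riesz decomposition with harmonic part $\gamma'_n h$ for some $\gamma'_n\ge 0$ (minimality again). Comparing with the already-established decomposition $T_{u^{\star n}}h=\gamma^n h+GA_{u^{\star n}}h$ and invoking \emph{uniqueness} of the harmonic part forces $\gamma^n=(1-\varepsilon)^n+\gamma'_n\ge(1-\varepsilon)^n$, hence $\gamma\ge 1-\varepsilon$. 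The point is that the potential term never needs to be estimated; uniqueness of the Riesz splitting does all the work.
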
 
The proof of this result uses the arguments of Choquet-Deny theory and is given in Section~\ref{sec2}. In Section~\ref{applications} we apply our results for homogeneous random walks in $\Z^d$ killed up on  the time of the first exit from some general cone. The results of this section show that under quite general assumptions on the cone, centered random walks satisfy \eqref{equation1-2}. 
\medskip

The functional relations \eqref{e1-2} are the key point of our approach. Before formulating our next results, we rewrite  these relations for the case when (A2) holds with the equality and we obtain a probabilistic interpretation of  \eqref{e1-2} in terms of the corresponding $h$-transform. 
For this  it is convenient to introduce the first time when the process $(X(t))$ exits from $E\star u$ : 
\[
\eta_u = \inf\{t \geq 1:~X(t)\not\in E\star u\}.
\]
Recall that for a given non-zero harmonic function $h$ of $(X(t))$, the $h$-transform $(X_h(t))$ of the process $(X(t))$ is defined as a 
Markov chain with transition probabilities 
\[
p_h(x,y) ~=~ p(x,y) h(y)/h(x), \, x,y \in E.
\]
For any non-zero harmonic function $h$ of $(X(t))$, the transition matrix of the $h$-transform $(X_h(t))$  is stochastic on $E$ and consequently, the process $(X_h(t))$ does not exit from the set $E$. If  the sequence of functions 
\be\label{eq1-50}
f_n(x) = \P_x(\tau_\vartheta > n)/\P_e(\tau_\vartheta > n),  \quad x\in E
\ee
converges   to some non zero harmonic function $h: E\to \R_+$, then the random walk $(X(t))$ conditioned on the sequence of events $(\{\tau_\vartheta > n\})$ to stay in $E$ is the corresponding $h$-transform of $(X(t))$, see for instance Bertoin and Doney~\cite{Bertoin-Doney}.  

\begin{prop}\label{pr-intuition} Suppose that (A0) and (A1) are satisfied and let for some $u\in E$, $E\star u \subset E$ and 
\[
p(x\star u, y \star u) = p(x,y), \quad \forall x,y\in E.
\]
Then  for any non negative function $h: E\to \R_+$, 
\be\label{e1-200}
GA_{u}h (y)  = \E_{y\star u} \bigl(h(X(\eta_u)), \; \eta_u < \tau_\vartheta \leq +\infty \bigr) , \quad \forall y\in E.
\ee
If moreover, the hitting probabilities of $(X(t))$ are slowly varying along the given element $u\in E$,  then for any $x\in E$ and any non zero harmonic function $h$ of $(X(t))$, the quantity $h(x)/h(x\star u)$ is equal to the probability that the corresponding $h$ transform $(X_h(t))$ starting at $x\star u$ never exits from the set $E\star u$. 
\end{prop}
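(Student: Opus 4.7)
The plan is to handle the two assertions in sequence: the first identity is a direct path decomposition under the stronger translation-invariance hypothesis, while the second reduces to a computation in the $h$-transform combined with Theorem~\ref{th1}.

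For \eqref{e1-200}, I first simplify $A_u h$ under the hypothesis $p(x\star u, y\star u) = p(x,y)$. Writing any $y \in E\star u$ as $y = z\star u$ with $z \in E$, the two terms in $a_u(x,y)$ cancel, so $a_u(x,y) = 0$ on $E\star u$ and
\[
A_u h(x) ~=~ \sum_{y\in E\setminus E\star u} p(x\star u, y) h(y) ~=~ \E_{x\star u}\bigl(h(X(1));\; \eta_u = 1,\; \tau_\vartheta > 1\bigr).
\]
Next I exploit the bijection $z \mapsto z\star u$ between $E$ and $E\star u$: the identity $p(a\star u, b\star u) = p(a,b)$ shows that the killed chain started at $y\star u$, restricted to paths staying in $E\star u$ up to time $t-1$, is in distribution the image of the killed chain started at $y$ under this bijection. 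Decomposing $\E_{y\star u}(h(X(\eta_u));\,\eta_u < \tau_\vartheta)$ according to $\eta_u = t$ and applying the Markov property at time $t-1$ then yields
\[
\E_{y\star u}\bigl(h(X(\eta_u));\; \eta_u < \tau_\vartheta\bigr) ~=~ \sum_{t\geq 1}\sum_{z\in E}\P_y(X(t-1){=}z)\, A_u h(z) ~=~ G A_u h(y).
\]

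For the second part, I unfold the definition of the $h$-transform along a path $(x\star u, z_1\star u, \ldots, z_t\star u)$ staying inside $E\star u$. The transition factors $p_h(z_s\star u, z_{s+1}\star u) = p(z_s, z_{s+1}) h(z_{s+1}\star u)/h(z_s\star u)$ telescope in the $h$-ratios and the $p$-factors collapse by translation invariance, giving
\[
\P^h_{x\star u}(\eta_u > t) ~=~ \frac{1}{h(x\star u)}\, \E_x\bigl(h(X(t)\star u);\; t < \tau_\vartheta\bigr).
\]
Now I invoke Theorem~\ref{th1} to substitute $h(X(t)\star u) = h(X(t)) + G A_u h(X(t))$. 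Harmonicity gives $P^t h(x) = h(x)$ for the first term; and since $G A_u h$ is finite (by Theorem~\ref{th1} again), it is a genuine potential for the transient chain $(X(t))$, and Riesz's decomposition forces $P^t(G A_u h)(x) \to 0$ as $t\to\infty$. Passing to the limit yields $\P^h_{x\star u}(\eta_u = +\infty) = h(x)/h(x\star u)$, as required.

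The only delicate point is the path-level identification in the first step: I need to verify carefully that the map $z \mapsto z\star u$ carries the killed law on $E$ bijectively onto the law on $E\star u$ of paths that have not yet exited, so that the exit step at $\eta_u$ is accounted for \emph{exactly} by the operator $A_u$. Once this bookkeeping is in place, the second part is essentially a direct consequence of Theorem~\ref{th1} together with the standard vanishing of $P^t$ on finite potentials of transient chains; the slow-variation hypothesis enters the argument only through the use of Theorem~\ref{th1}.
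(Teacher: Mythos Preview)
Your proof is correct. For the first assertion your argument is essentially the paper's: both observe that under the equality hypothesis $a_u(z,z')$ vanishes on $E\star u$ and reduces to $p(z\star u,z')$ on $E\setminus(E\star u)$, then identify the law of $(X(t))$ started at $y\star u$ and restricted to $\{\eta_u>t\}$ with the law of the original killed chain started at $y$, shifted by $u$. The decomposition on $\eta_u=t$ with Markov at time $t-1$ is exactly what the paper carries out when it expands $GA_u h(y)$ through the identity $G(y,z)=\sum_{t\geq 0}\P_{y\star u}(X(t)=z\star u,\,\eta_u>t)$.

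For the second assertion your route differs slightly from the paper's. The paper works with the complementary \emph{exit} probability: it invokes the standard $h$-transform identity
\[
\P^h_{x\star u}(\eta_u<\infty)=\frac{1}{h(x\star u)}\,\E_{x\star u}\bigl(h(X(\eta_u));\,\eta_u<\tau_\vartheta\bigr),
\]
recognises the right-hand side as $GA_u h(x)/h(x\star u)$ by the first assertion, and then applies Theorem~\ref{th1} to rewrite $GA_u h(x)=h(x\star u)-h(x)$. Your argument instead computes the \emph{survival} probability $\P^h_{x\star u}(\eta_u>t)$ at each fixed time, substitutes $T_u h=h+GA_u h$ from Theorem~\ref{th1}, and lets $t\to\infty$ using that a finite potential satisfies $P^t(GA_u h)\to 0$. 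Both arguments are valid and rest on the same two ingredients (Theorem~\ref{th1} and the path-level translation invariance); the paper's version is a line shorter because it absorbs the limit into the stopping-time formula, while yours is more self-contained in that it unfolds the $h$-transform identity explicitly rather than quoting it.
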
 
\begin{proof} Indeed, in this case, for any $y,z\in E$,
\[
G(y,z) ~=~\sum_{t=0}^\infty \P_{y\star u}(X(t)= z\star u, \; \eta_u > t) 
\]
and according to the definition of the matrix $A_u$,
\[
a_{u}(x,y) = \begin{cases} 0, &\text{if $y \in E \star u$,} \\
p(x\star u, y) &\text{otherwise.} 
\end{cases} 
\]
Hence, 
\begin{align*}
GA_{u}h (y)  &=~ \sum_{z,z' \in E} G(y,z)a_u(z,z')h(z') \\
&=~   \sum_{z\in E} \, \sum_{z'\in E\setminus(E\star u)} \, \sum_{t=0}^\infty \P_{y\star u}(X(t) =  z\star u, \;  t < \eta_u) p(z\star u, z')h(z')  \\
&=~ \E_{y\star u}\bigl(h(X(\eta_u)),\, \eta_u < \tau_\vartheta \leq +\infty\bigr).
\end{align*} 
The first assertion of Proposition~\ref{pr-intuition} is therefore proved. The second assertion is a consequence of Theorem~\ref{th1}. To get this assertion, it is sufficient to notice that by \eqref{e1-2} and \eqref{e1-200}, the probability that the $h$-transform $(X_h(t))$ starting at $x\star u$ ever exits from the set $E\star u$ is equal to 
\[
\frac{1}{h(x\star u)} \E_{x\star u} (h(X(\eta_u)); \, \eta_u < \tau_\vartheta \leq +\infty) ~=~  1 - h(x)/h(x\star u). 
\] 
\end{proof}

Now we  introduce an analogue of the ladder height process $(H_n)$ and extend the result of the classical fluctuation theory formulated in the previous section. From now on the random walk $(X(t))$ is assumed to satisfy  the  following conditions:
\medskip
{\em 
\begin{enumerate}
\item[(B1)]  the state set $E$ of $(X(t))$ is included to some group $({\cal E}, \star)$ and $(E,\star)$ is a semi-group with an identity element $e$; 
\item[(B2)]  for any $x,y, u\in E$, 
\[
p\bigl(x \star u, y \star u\bigr) ~\geq~ p(x,y),  
\]
\end{enumerate} 
} 

\medskip
Remark that under the above assumptions, for any $u\in E$, the previous conditions (A1) and (A2)  are satisfied  and consequently, the operator $A_u$  on the set of non-negative functions $\{\varphi : E \to \R_+\}$ is well defined. 

To introduce the ladder Markov chain $(H_n)$ we need the following lemma. 
\begin{lemma}\label{lemma1-1}  Under the hypotheses (A0), (B1) and (B2), the matrix $$P_H = \left(p_H(x,y), \, x,y\in E\right)$$ with 
\be\label{eq2-3}
p_H(x,y)  =  GA_{x}\1_{\{y\}}(e) ~=~ \sum_{z\in E} G(e,z)a_x(z,y), \quad x,y\in E,  
\ee
is substochastic.
\end{lemma}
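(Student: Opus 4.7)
The plan is to verify both defining properties of a substochastic matrix separately: nonnegativity of each entry, and that each row sum is at most $1$. Nonnegativity is essentially free: assumption (B2) gives $a_x(z,y)\ge 0$ for all $x,y,z\in E$, as was already noted just after the definition \eqref{e1-2p} of $A_u$, and $G(e,z)\ge 0$ since $G$ is a Green kernel, so $p_H(x,y)$ is a sum of nonnegative terms.

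For the row-sum bound I would first swap the order of summation, which is legitimate because every summand is nonnegative, to obtain $\sum_{y\in E}p_H(x,y)=GA_x\1(e)$. The key algebraic reduction uses the operator identity $A_u=T_uP-PT_u$, which is merely the rewriting of the relation $T_uP\phi=PT_u\phi+A_u\phi$ recorded in the excerpt. Applied to $\phi=\1$, together with the observation $T_u\1=\1$ (valid because $(E,\star)$ is a semigroup and $u\in E$, so $z\star u\in E$ and $\1(z\star u)=1$), this yields $A_u\1(z)=P\1(z\star u)-P\1(z)$. Introducing the killing probability $\psi(z):=1-P\1(z)=p(z,\vartheta)\in[0,1]$, one rewrites this further as
\[
A_x\1(z)=\psi(z)-\psi(z\star x)\le \psi(z),
\]
the inequality being immediate since $\psi\ge 0$ everywhere.

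Applying the nonnegative kernel $G$ pointwise then gives $GA_x\1(e)\le G\psi(e)$, and the proof finishes with the probabilistic identification
\[
G\psi(e)=\sum_{z\in E}G(e,z)\,p(z,\vartheta)=\sum_{t=0}^{\infty}\P_e(X(t)\in E,\,X(t+1)=\vartheta)=\P_e(\tau_\vartheta<\infty)\le 1.
\]
I do not anticipate any substantive obstacle. The whole argument hinges on the operator identity for $A_u$ and on the elementary observation that $\psi=\1-P\1$ is the one-step killing probability; no appeal to the transience assumption (A0) is even needed beyond what is already used to define $G$, since the final bound is obtained by pointwise domination rather than by manipulating possibly infinite differences such as $GP\1=G\1-\1$.
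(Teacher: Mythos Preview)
Your argument is correct. Both you and the paper start from the same pointwise identity $A_x\1(z)=P\1(z\star x)-P\1(z)=\psi(z)-\psi(z\star x)$, but the two proofs diverge from there. The paper expands $GA_x\1(e)=\sum_{k\ge 0}P^k(T_xP-PT_x)\1(e)$, telescopes the partial sums in $k$, and after keeping track of the remainder terms arrives at the bound
\[
P_H\1(x)\le T_xP\1(e)=P\1(x)=1-p(x,\vartheta).
\]
You instead bound the integrand pointwise by $A_x\1\le\psi$ and then identify $G\psi(e)=\P_e(\tau_\vartheta<\infty)\le 1$. Your route is shorter and avoids the partial-sum bookkeeping; on the other hand the paper's bound carries a little extra information, namely $p_H(x,\vartheta)\ge p(x,\vartheta)$, i.e.\ the ladder chain is absorbed in one step at least as often as the original walk starting from $x$. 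Either bound suffices for the lemma.
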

\begin{proof} Indeed,  the coefficients of the matrix $P_H$ are non-negative  and for any $x\in E$ 
\begin{align*} 
P_H\1(x) &=~ GA_x\1(e) = \lim_{n\to\infty} \sum_{k=0}^n P^kA_x\1 (e) =  \lim_{n\to\infty} \sum_{k=0}^n P^k(T_xP - PT_x)\1 (e) \\
&=~ \lim_{n\to\infty} \left( \sum_{k=0}^n  P^kT_xP\1 (e)  - \sum_{k=1}^{n+1} P^{k}T_x\1 (e) \right) 
\end{align*} 
The above relation combined with the equality $T_x\1 = \1$ implies that  
\begin{align*} 
P_H\1(x) &=~ \lim_{n\to\infty} \left( \sum_{k=0}^n  P^kT_xP\1 (e)  - \sum_{k=1}^{n+1} P^{k}\1 (e) \right) \\
&=~   T_xP\1(e)  - \lim_{n\to\infty} \left( \sum_{k=1}^n  P^k(Id - T_xP) \1 (e)  +  P^{n+1}\1 (e) \right). 
\end{align*} 
Since $P^{n+1}\1 (e) \geq 0$ and 
\[
 P^k(Id - T_xP) \1 (e) ~=~ \sum_{y\in E} \P_e(X(k) = y)\left( 1 - \sum_{z\in E} p(y\star x,z)\right) ~\geq~ 0,
\]
from the last relation it follows  that  $
P_H\1(x) \leq T_xP\1(e) ~\leq~1$.  
\end{proof} 

\begin{defi}The ladder height process relative to the Markov chain $(X(t))$ is  defined as a  discrete time Markov chain $(H_n)$ on $ E$ with transition matrix $P_H$. 
\end{defi} 
The  Markov chain $(H_n)$ being sub-stochastic on $E$, we introduce an additional absorbing state $\vartheta$. Without any restriction of generality we can keep the same notation of the additional absorbing state as for the random walk $(X(t))$.   We let 
\be\label{eq2-4p}
p_H(x,\vartheta) = 1 - \sum_{y\in E} p_H(x,y), \quad \forall x\in E.
\ee
Before formulating our next result let us give another equivalent definition of the  ladder Markov chain $(H_n)$ in a  particular case, when (B2) holds with the equality. The following statement is a straightforward consequence of Proposition~\ref{pr-intuition} 

\begin{prop}\label{prop1-1} Suppose that (A0) and (B1) are satisfied and let 
\[
p(x \star u, y \star u) = p(x,y), \quad \forall u,x,y\in E. 
\]
Then for a sequence of random times $(t_n)$ defined for $n\in\N$ by 
\be\label{eq1-1001} 
t_0 = 0, \quad \text{and} \quad  t_{k+1} = \begin{cases}\inf\{ n > t_k : X(n) \not\in E \star X(t_k)\}, &\text{if $t_k <+\infty$},\\ +\infty &\text{otherwise,}
\end{cases}
\ee
in distribution 
\be\label{e1-10}
H_k = \begin{cases} X(t_k) &\text{if $t_k < \infty$}\\
\vartheta, &\text{otherwise}.
\end{cases} 
\ee 
\end{prop}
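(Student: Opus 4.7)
The plan is to reduce the claim to a one-step distributional identity via Proposition~\ref{pr-intuition} and then extend it to all $k$ using the strong Markov property of $(X(t))$. Since $(H_n)$ is by construction the Markov chain on $E\cup\{\vartheta\}$ with transition matrix $P_H$, it suffices to prove that the process $(\widetilde{H}_k)$ defined by setting $\widetilde{H}_k=X(t_k)$ on $\{t_k<\infty\}$ and $\widetilde{H}_k=\vartheta$ otherwise is itself a Markov chain with the same transition matrix.

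First I would identify, for every $x,y\in E$, the entry $p_H(x,y)$ with $\P_x(\widetilde{H}_1=y)$. Under $\P_x$ the definition \eqref{eq1-1001} gives $t_1=\inf\{n\geq 1 : X(n)\not\in E\star x\}=\eta_x$. Applying Proposition~\ref{pr-intuition} with $u=x$ to the non-negative function $h=\1_{\{y\}}$ and evaluating at $e$ yields
\[
p_H(x,y) ~=~ GA_x\1_{\{y\}}(e) ~=~ \P_x\bigl(X(\eta_x)=y,\;\eta_x<\tau_\vartheta\leq+\infty\bigr) ~=~ \P_x\bigl(\widetilde{H}_1=y\bigr).
\]
Summing over $y\in E$ and invoking \eqref{eq2-4p} then forces $p_H(x,\vartheta)=\P_x(\widetilde{H}_1=\vartheta)$, so the full one-step law of $(\widetilde{H}_k)$ under $\P_x$ matches the $x$-row of $P_H$.

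Next I would apply the strong Markov property of $(X(t))$ at each stopping time $t_k$. On the event $\{t_k<\infty,\;X(t_k)=x_k\in E\}$, the shifted process $(X(t_k+s))_{s\geq 0}$ is independent of $\mathcal{F}_{t_k}$ with law $\P_{x_k}$, and $t_{k+1}-t_k$ is precisely its first exit time $\eta_{x_k}$ from $E\star x_k$. The first step, applied with $x_k$ in place of $x$, then shows that the conditional distribution of $\widetilde{H}_{k+1}$ given $\mathcal{F}_{t_k}$ is $p_H(x_k,\cdot)$; on $\{\widetilde{H}_k=\vartheta\}$ both processes are absorbed. This yields the Markov property of $(\widetilde{H}_k)$ with transition matrix $P_H$ and hence the distributional identity \eqref{e1-10}.

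I do not foresee a serious obstacle; the only care needed is the bookkeeping around the absorbing state $\vartheta$. Since $\vartheta\not\in E\star x$ for any $x\in E$, one automatically has $t_{k+1}\leq\tau_\vartheta$, and the two conventions sending the chain to $\vartheta$ (namely $t_k=+\infty$ and $X(t_k)=\vartheta$) are consistent with one another. This is presumably what the authors have in mind when calling the statement a direct consequence of Proposition~\ref{pr-intuition}.
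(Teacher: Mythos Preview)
Your proposal is correct and matches exactly the route the paper indicates: the authors only state that the proposition is a straightforward consequence of Proposition~\ref{pr-intuition}, and you supply precisely this by specializing $GA_u h(e)$ with $u=x$, $h=\1_{\{y\}}$ to identify $p_H(x,\cdot)$ with the law of $X(\eta_x)$ under $\P_x$, then propagating via the strong Markov property. The bookkeeping around $\vartheta$ you mention is the only subtlety, and you handle it correctly.
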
 

\medskip 

Now we extend the notion of the renewal function $V: E\cup\{\vartheta\}\to\R_+\cup\{+\infty\}$ . Denote  ${\cal T}_\vartheta = \inf\{ n \geq 0 : H_n = \vartheta\}$ and let 
\be\label{eq-V}
V(\vartheta) =  0 \quad \text{and} \quad V(x) ~=~  \E_x({\cal T}_\vartheta), \quad x \in E. 
\ee
Our next result proves that this function is finite and satisfies the properties similar to those of the renewal function for a one-dimensional random walk. Recall  that under (A0), the Markov chain $(X(t))$ is irreducible in $ E$. The  function $x \to \E_x(\tau_\vartheta)$ is  therefore either finite in $ E$, or infinite everywhere in  $ E$. 
\begin{theorem}\label{th2} 
Under the hypotheses (A0), (B1) and (B2) the following assertions hold :
\begin{enumerate} 
\item[(i)] The ladder height process $(H_n)$ is transient on $E$. 
\item[(ii)] The function $V$ is finite and satisfies the following relations ~:
\be\label{eq2-5b}
V(e) ~=~1
\ee
and 
\be\label{eq2-5a} 
V(x \star u) ~=~ V(x) + GA_uV(x), \quad \forall x,u\in E. 
\ee
\item[(iii)] If $\E_\cdot(\tau_\vartheta) = +\infty$, the function $V$ is  harmonic for the Markov chain $(X(t))$ and for  any $x\in E$,
\be\label{eq2-5}
\lim_{n\to\infty} {\P_x(\tau_\vartheta > n)}/{\P_e(\tau_\vartheta > n)} ~=~ V(x).  
\ee
\item[(iv)] If $\E_\cdot(\tau_\vartheta) < +\infty$,  the function $V$ is  potential for the Markov chain $(X(t))$ and  for  any $x\in E$,  
\be\label{eq2-6}
V(x) ~=~ {\E_x(\tau_\vartheta)}/{\E_e(\tau_\vartheta)}  ~\leq~ \liminf_{n\to\infty} ~{\P_x(\tau_\vartheta > n)}/{\P_e(\tau_\vartheta > n)}.  
\ee
\end{enumerate} 
\end{theorem}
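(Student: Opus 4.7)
The proof rests on the operator identity $T_u P = P T_u + A_u$, which by iteration and evaluation on $\1$ yields the telescoping identity
\[
\P_{x\star u}(\tau_\vartheta > n) - \P_x(\tau_\vartheta > n) ~=~ \sum_{k=0}^{n-1} P^k A_u P^{n-1-k}\1(x), \qquad (\dagger)
\]
and, after summation, the resolvent identity $T_u G = GT_u + GA_u G$. The second key algebraic ingredient, obtained from $T_{x\star u} = T_x T_u$, is the relation $A_{x\star u} = A_x T_u + T_x A_u$. These two identities are essentially the only nontrivial algebraic inputs.

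For parts (i) and (ii), the equality $V(e)=1$ is immediate: a direct check shows $a_e(z,y)=0$ for all $z,y\in E$, so $A_e\equiv 0$, whence $p_H(e,\vartheta)=1$ and $\mathcal{T}_\vartheta \equiv 1$ under $\P_e$. For the functional equation $V(x\star u)=V(x)+GA_u V(x)$, I would work with the truncations $V_n := \E_\cdot(\mathcal{T}_\vartheta\wedge n)\uparrow V$, which satisfy $V_n = \1 + P_H V_{n-1}$. Combining this with $A_{x\star u}=A_xT_u+T_xA_u$ and $T_uG=GT_u+GA_uG$ leads, via a short induction from $Q_0=0$ and the recursion $Q_n = P_H Q_{n-1} - GA_u P_H^{n-1}\1$, to the explicit formula
\[
V_n(x\star u) - V_n(x) - GA_u V_n(x) ~=~ -\sum_{k=0}^{n-1} P_H^k GA_u P_H^{n-1-k}\1(x).
\]
The right-hand side is nonpositive, so the limit $W(x):=V(x\star u)-V(x)-GA_uV(x)$ satisfies $W\leq 0$; passing to the limit using transience of $(H_n)$ (which forces $P_H^{n-1-k}\1\to 0$) together with a dominated-convergence argument forces $W\equiv 0$, yielding both the functional equation and finiteness of $V$. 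Part (i), the transience of $(H_n)$, then follows from $V=G_H\1<+\infty$.

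For part (iv), when $\E_\cdot(\tau_\vartheta)<+\infty$, summing $(\dagger)$ over $n\geq 1$ (justified by Fubini) gives the Wald-type identity
\[
\E_{x\star u}(\tau_\vartheta) - \E_x(\tau_\vartheta) ~=~ GA_u(\E_\cdot\tau_\vartheta)(x),
\]
so $U(x) := \E_x(\tau_\vartheta)/\E_e(\tau_\vartheta)$ also satisfies $U=\1+P_HU$ with $U(e)=1$. Since $V$ is the minimal nonnegative solution of this Poisson equation and $U-V\geq 0$ is $P_H$-harmonic and dominated by $U$, transience of $(H_n)$ forces $U=V$, i.e., $V=\E_\cdot(\tau_\vartheta)/\E_e(\tau_\vartheta)$. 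The potential representation of $V$ for $(X(t))$ is a consequence of $\E_\cdot(\tau_\vartheta)=G\1$, and the $\liminf$ inequality follows from Fatou's lemma applied to $(\dagger)$ divided by $\P_e(\tau_\vartheta>n)$.

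For part (iii), the hardest step is the ratio convergence $f_n(x) := \P_x(\tau_\vartheta>n)/\P_e(\tau_\vartheta>n) \to V(x)$. Dividing $(\dagger)$ by $\P_e(\tau_\vartheta>n)$ and passing to the limit along any convergent subsequence of $(f_n)$ yields any cluster point $h$ satisfying $h(x\star u)-h(x)=GA_u h(x)$ and $h(e)=1$. Combining this with the $P$-superharmonicity of $h$ inherited from $f_n(x) = \sum_y p(x,y)f_{n-1}(y)\rho_n$, where $\rho_n := \P_e(\tau_\vartheta>n-1)/\P_e(\tau_\vartheta>n)$, and the slow-variation relation $\rho_n\to 1$ (which must be established from non-integrability of $\tau_\vartheta$ alone), upgrades $h$ to $P$-harmonicity. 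A uniqueness argument for nonnegative solutions of the functional equation with $h(e)=1$ then forces $h=V$, giving both convergence and $P$-harmonicity of $V$. The principal obstacles in this part are tightness of $(f_n)$, the derivation of $\rho_n\to 1$ from non-integrability alone, and the exchange of limit with the infinite sum in the normalization of $(\dagger)$; these constitute the substantive probabilistic content of the theorem beyond the algebraic identity $(\dagger)$.
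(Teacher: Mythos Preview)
Your algebraic identities are correct and match the paper's, but the proposal has three genuine gaps that prevent it from closing.

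First, your argument for (i)--(ii) is circular. You invoke transience of $(H_n)$ (to make $P_H^{m}\1\to 0$) in order to pass to the limit in the $V_n$--identity, and then derive transience from $V=G_H\1<\infty$. Neither finiteness of $V$ nor $\P_x(\mathcal T_\vartheta<\infty)=1$ has been established at that point; even granting the latter, the sum $\sum_{k=0}^{n-1}P_H^kGA_uP_H^{n-1-k}\1$ has a growing number of terms and no stated dominating function. The paper breaks this circle differently: it first proves $V\le\liminf_n f_n$ by a Fatou argument on the inequality $f_n(x)\ge 1+\sum_{k\le n}P^{k-1}A_xf_n(e)$, combined with the Harnack bound $f_n(x)\le 1/Q(e,x)$ (which gives tightness). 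This yields $V<\infty$ and $PV(e)\le 1$ \emph{before} any functional equation is proved; the identity \eqref{eq2-5a} is then obtained by a coupling of two ladder chains $(H_n,H_n^u)$.

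Second, your route to (iii) hinges on $\rho_n=\P_e(\tau_\vartheta>n-1)/\P_e(\tau_\vartheta>n)\to 1$, which does not follow from non-integrability of $\tau_\vartheta$ alone, and you flag this yourself as an obstacle. The paper avoids this entirely. The crucial observation you are missing is that the functional equation $T_uV=V+GA_uV$ forces $(\Id-P)V$ to be \emph{constant} on $E$: applying $(\Id-P)$ and using $T_uP=PT_u+A_u$ gives $T_u(\Id-P)V=(\Id-P)V$ for every $u$. Together with $PV(e)\le 1=V(e)$ this makes $V$ superharmonic with $(\Id-P)V\equiv c\ge 0$; Riesz decomposition then gives $V=h+cG\1=h+c\,\E_\cdot(\tau_\vartheta)$, so $c=0$ and $V$ is harmonic whenever $\E_\cdot(\tau_\vartheta)=\infty$. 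Convergence of $f_n$ follows by the minimum principle: any subsequential limit $f$ is superharmonic with $f\ge V$ and $f(e)=V(e)=1$, hence $f-V$ is nonnegative superharmonic vanishing at $e$, so $f=V$.

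Third, in (iv) your claim that ``transience of $(H_n)$ forces $U=V$'' is not valid: a transient chain can have nontrivial nonnegative harmonic functions, and $(H_n)$ is not even irreducible (since $p_H(e,\cdot)\equiv 0$), so the minimum principle for $(H_n)$ is unavailable. Your derivation of $U=\1+P_HU$ is correct, but uniqueness needs more. The paper again uses the constancy of $(\Id-P)V$: since $V\le U$ with $U=G\1/G\1(e)$ potential for $(X(t))$, $V$ is potential too, and $(\Id-P)V\equiv c$ forces $V=cG\1$; normalizing at $e$ gives $V=U$. The $\liminf$ inequality in \eqref{eq2-6} is then the Fatou bound $V\le\liminf f_n$ already obtained.
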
 
The proof of this theorem is given in section~\ref{proof-th2}. 

Remark  that under the hypotheses of Theorem~\ref{th1} and Theorem~\ref{th2}, by \eqref{eq2-5a} and \eqref{e1-2}, 
the renewal function $V$ satisfies the same functional relations as any harmonic function $h$. This is the key point of the proof of our next result.

From now on the Green's function of the ladder height process $(H_n)$ will be denoted by 
\[
G_H(x,y) ~=~ \sum_{n=0}^\infty \P_x(H_n = y), \quad x,y\in E.
\]
For $n\geq 1$ and  a non negative function $\varphi :  E\to\R_+$ we introduce 
\[
P_H^n\varphi(x) ~=~ \E_x(\varphi(H_n); \, {\cal T}_\vartheta > n) 
\]
and 
\[
G_H\varphi(x) ~=~ \sum_{n=0}^\infty P_H^n\varphi(x) ~=~ \sum_{n=0}^\infty  \E_x(\varphi(H_n); \, {\cal T}_\vartheta > n).
\]

\begin{theorem}\label{th3} Suppose that the conditions (A0), (B1) and (B2) are satisfied and let  the hitting probabilities of the random walk $(X(t))$ be slowly varying along every  $u\in E$. 
Then  any harmonic function $h$ of  $(X(t))$ is super harmonic for the ladder height process $(H_n)$ and 
\be\label{eq2-7n}
h(x) ~=~h(e) V(x) + \tilde{h}(x), \quad \forall x\in E,  
\ee
with a harmonic, for the ladder height process $(H_n)$, function 
\be\label{eq2-8n}
\tilde{h}(x) = \lim_{n\to\infty} \E_x(h(H_n); \, {\cal T}_\vartheta > n),\quad \forall x\in E. 
\ee
If moreover $\E_\cdot(\tau_\vartheta) = +\infty$, then for any harmonic function $h$, the function \eqref{eq2-8n} is equal to zero and $h$  is proportional to $V$. Otherwise, for any harmonic function $h$, the function \eqref{eq2-8n} is 
non trivial. 
\end{theorem}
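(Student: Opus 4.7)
The plan is to reduce the theorem to the Riesz decomposition on the transient chain $(H_n)$ after establishing the identity $(\Id-P_H)h=h(e)\1$, which couples the two chains via Theorem~\ref{th1}. Specializing Theorem~\ref{th1} to $y=e$ and $u=x$ and using the semigroup identity $e\star x=x$ gives $GA_x h(e)=h(x)-h(e)$. On the other hand, Lemma~\ref{lemma1-1} yields $p_H(x,y)=\sum_z G(e,z)a_x(z,y)$, and Tonelli's theorem (all summands are non-negative) gives
\[
P_H h(x)=\sum_y p_H(x,y)h(y)=\sum_z G(e,z)\sum_y a_x(z,y)h(y)=GA_x h(e).
\]
Combining the two computations, $P_H h(x)=h(x)-h(e)$, so $h$ is super-harmonic for $(H_n)$ with constant charge $h(e)\1$.

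Because $(H_n)$ is transient (Theorem~\ref{th2}(i)), the Riesz decomposition recalled in the Preliminaries, applied now with $P$ replaced by $P_H$ and $G$ replaced by $G_H$, yields
\[
h ~=~ G_H\bigl((\Id-P_H)h\bigr) + \lim_n P_H^n h ~=~ h(e)\,G_H\1 + \tilde{h},
\]
with $\tilde{h}(x)=\lim_n \E_x(h(H_n);\,{\cal T}_\vartheta>n)$ harmonic for $(H_n)$; the monotone limit exists since iteration of $P_H h\le h$ forces $0\le P_H^n h\le h$ for every $n$. Evaluating $G_H\1(x)=\sum_n \P_x({\cal T}_\vartheta>n)=\E_x({\cal T}_\vartheta)=V(x)$ produces \eqref{eq2-7n} and \eqref{eq2-8n}.

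For the dichotomy, suppose first $\E_\cdot(\tau_\vartheta)=+\infty$. Theorem~\ref{th2}(iii) then says $V$ is $(X(t))$-harmonic, so $\tilde{h}=h-h(e)V$ is $(X(t))$-harmonic and non-negative (as a monotone limit of $P_H^n h\ge 0$), with $\tilde{h}(e)=h(e)-h(e)V(e)=0$ by $V(e)=1$. The identity $P^n\tilde{h}(e)=\tilde{h}(e)=0$ combined with the irreducibility of $(X(t))$ forces $\tilde{h}\equiv 0$, since for every $y\in E$ there is $n$ with $\P_e(X(n)=y)>0$. Therefore $h=h(e)V$. Suppose now $\E_\cdot(\tau_\vartheta)<+\infty$. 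Theorem~\ref{th2}(iv) identifies $V$ with a non-trivial potential, and a non-zero potential $V=G\phi$ cannot be $(X(t))$-harmonic, since $PV=V$ would force $\phi=V-PV=0$ and hence $V\equiv 0$. Irreducibility and $h\ne 0$ give $h(e)>0$; were $\tilde{h}$ identically zero, $V=h/h(e)$ would be harmonic, a contradiction. Hence $\tilde{h}$ is non-trivial.

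The technically delicate point is the verification of $(\Id-P_H)h=h(e)\1$: one must match the Green operator evaluated at the identity element $e$ in Lemma~\ref{lemma1-1} with the specialization $y=e$ in Theorem~\ref{th1}, and justify the interchange of summations through positivity of $a_x(\cdot,\cdot)$ and of $h$. Once this identity is in place, the rest of the argument is a standard application of the Riesz decomposition together with the irreducibility of $(X(t))$.
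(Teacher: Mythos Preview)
Your proof is correct and follows essentially the same route as the paper: specialize Theorem~\ref{th1} at $y=e$, $u=x$ to obtain $P_H h = h - h(e)\1$, apply the Riesz decomposition for the transient chain $(H_n)$ to get $h = h(e)G_H\1 + \tilde{h} = h(e)V + \tilde{h}$, and then split into the two cases using Theorem~\ref{th2}(iii)--(iv). Your treatment is in fact slightly more explicit than the paper's in two places: you spell out the Tonelli step linking $P_H h(x)$ with $GA_x h(e)$, and in the case $\E_\cdot(\tau_\vartheta)<+\infty$ you make explicit that $h(e)>0$ (via irreducibility) before concluding that $\tilde{h}\not\equiv 0$, whereas the paper leaves both implicit.
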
 
The proof of this theorem is given in section~\ref{proof-th3}.

\section{Application to a homogeneous random walk}\label{applications} 
In this section, we apply our results for a homogeneous random walk in $Z^d$ killed up on the time of the first exit from a convex cone.  

Consider  a probability measure $\mu$ on  $\Z^d$ and let $(X(t))$ be a substochastic  random walk   on $E$   with transition probabilities  $\P_x(X(1)=y) = p(x,y) = \mu(y-x)$, $x,y\in E$. 
Such a random walk is a copy of a homogeneous random walk $(S(t))$ in $\Z^d$ with transition probabilities $p(x,y) = \mu(y-x)$, killed up on the first exit from $ E$. 
The additional absorbing state $\vartheta$ is then identified with the set $\Z^d\setminus E$: for every $x\in E$ we let 
\[
p(x,\vartheta) = 1 - \sum _{y\in E} \mu(y-x) 
\]
We will assume that 
{\em 
\begin{enumerate}
\item[(C0)] the random walk $(X(t))$ is  irreducible  and transient on $E$;
\item[(C1)]   $0 \in E $ and for any $x,u\in E$, \; $x+u\in E$. 
\end{enumerate} 
}

Remark that under the above assumptions, the conditions (A0) , (B1) and (B2) of the previous section are  satisfied and consequently,  the ladder  process $(H_n)$ relative to the random walk $(X(t))$ is well defined. Moreover, in this case, (B2) holds with the equality, and hence, by Proposition~\ref{prop1-1},  for a sequence of random times $(t_n)$ defined  by 
\[
t_0 = 0, \quad \text{and} \quad  t_{k+1} = \begin{cases} \inf\{ n > t_k : X(n) \not\in E + X(t_k)\} &\text{ if $t_k <\infty$,}\\ 
+\infty &\text{otherwise} 
\end{cases} 
\]
in distribution 
\be\label{eq1-60}
H_k = \begin{cases} X(t_k) &\text{if $t_k < \infty$}\\
\vartheta, &\text{otherwise}.
\end{cases} 
\ee 
\medskip
Consider now the stopping times  ${\cal T}_\vartheta = \inf\{ n > 0 : H_n = \vartheta\}$ and $\tau_\vartheta = \inf\{ n > 0 : X(n) = \vartheta\}$, and let for $x\in E$,  $\eta_x = \inf\{ t > 0 : X(t)\not\in E + x\}$. Recall that the renewal function $V:E\cup\{ \vartheta\} \to\R_+$ is defined by 
$V(x) = \E_x({\cal T}_\vartheta)$ with $V(\vartheta)=0$. The following statement is a  consequence of Theorem~\ref{th2}.

\begin{cor}\label{corC-1} Under the hypotheses (C0) and (C1) the following assertions hold. 
\begin{enumerate} 
\item[(i)] The ladder height process $(H_n)$ is transient on $E$. 
\item[(ii)] The function $V$ is finite with $V(0)~=~1$ and satisfies the following relations 
\[
V(x+u) ~=~ V(x) + \E_{x+u}\bigl(V(X(\eta_u)), \, \eta_u < \tau_\vartheta \leq \infty\bigr), \quad \forall \; x,u\in E.
\]
\item[(iii)] If $\E_\cdot(\tau_\vartheta) = +\infty$, the function $V$ is  harmonic for the Markov chain $(X(t))$ and for  any $x\in E$,
\[
\lim_{n\to\infty} {\P_x(\tau_\vartheta > n)}/{\P_0(\tau_\vartheta > n)} ~=~ V(x).  
\]
\item[(iv)] If $\E_\cdot(\tau_\vartheta) < +\infty$,  the function $V$ is  potential for the Markov chain $(X(t))$ and  for  any $x\in E$,  
\[
V(x) ~=~ {\E_x(\tau_\vartheta)}/{\E_0(\tau_\vartheta)}  ~\leq~ \liminf_{n\to\infty} ~{\P_x(\tau_\vartheta > n)}/{\P_0(\tau_\vartheta > n)}.  
\]
\end{enumerate} 
\end{cor}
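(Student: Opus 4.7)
The plan is to derive Corollary~\ref{corC-1} as a direct specialization of Theorem~\ref{th2}, using Proposition~\ref{pr-intuition} to translate the abstract expression $GA_uV$ into the concrete expectation involving the exit time $\eta_u$ from the translated cone $E+x$.

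First I would verify that hypotheses (C0) and (C1) are a special case of (A0), (B1), (B2). Taking the ambient group $(\mathcal{E},\star) = (\Z^d,+)$, condition (C1) says that $0 \in E$ and $E + E \subset E$, so $(E,+)$ is a sub-semigroup of $\Z^d$ with identity element $e = 0$; this is (B1). Translation invariance of the kernel $p(x,y) = \mu(y-x)$ gives $p(x+u, y+u) = p(x,y)$ for all $x,y,u \in E$, so (B2) holds, in fact with equality. Hypothesis (A0) coincides with (C0).

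Second, once this embedding is in place, the transience of $(H_n)$, the finiteness of $V$, the normalization $V(0) = V(e) = 1$, together with assertions (iii) and (iv), follow word for word from Theorem~\ref{th2}, since $\P_e = \P_0$ and $\E_e = \E_0$ here. The only item of the corollary requiring an extra step is the recursion in (ii). For this I would invoke the first assertion of Proposition~\ref{pr-intuition}, whose hypotheses are met because (B2) holds with equality, and apply it to the non-negative function $V$ (extended by $V(\vartheta) = 0$). This yields
\[
GA_u V(x) = \E_{x+u}\bigl(V(X(\eta_u)),\; \eta_u < \tau_\vartheta \leq +\infty\bigr),
\]
and substituting into the identity $V(x+u) = V(x) + GA_uV(x)$ of Theorem~\ref{th2}(ii) gives the recursion stated in the corollary.

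The main point to watch, and essentially the only obstacle, is to make sure Proposition~\ref{pr-intuition} is legitimately applied with $h = V$. Its first assertion, as stated, does not require harmonicity: it uses only (A0), (B1) and the equality form of (B2), plus non-negativity of $h$. The finiteness of $V$ on $E$ guaranteed by Theorem~\ref{th2}(ii) removes any concern about $+\infty$ contributions in the expectation on the right-hand side, so the substitution is valid and the proof closes.
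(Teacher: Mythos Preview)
Your proposal is correct and matches the paper's own proof essentially line for line: the paper derives assertions (i), (iii), (iv) directly from Theorem~\ref{th2}, and obtains (ii) by combining Theorem~\ref{th2}(ii) with Proposition~\ref{pr-intuition}. Your additional remarks checking that (C0)--(C1) instantiate (A0), (B1), (B2) with equality, and that Proposition~\ref{pr-intuition}'s first assertion applies to any non-negative $h$ (hence to $V$), are accurate and make explicit what the paper leaves implicit.
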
 
\begin{proof} The assertions (i), (iii) and (iv) are proved by the corresponding assertions of Theorem~\ref{th2}. The assertion (ii)  follows from the second assertion of Theorem~\ref{th2} combined with Proposition~\ref{pr-intuition} \end{proof} 
\medskip 
\noindent 
Suppose now that 
{\em 
\begin{enumerate}
\item[(C0')] the random walk $(X(t))$ is transient on $ E$ and satisfies the following communication condition~:  there are $\kappa >0$ and a finite set ${\cal E}_0 \subset \Z^d$ such that 
\begin{itemize}
\item[(a)] $\mu(x) > 0$ for all $x\in{\cal E}_0$; 
\item[(b)] for any $x\not= y$, $x,y\in E$ there exists a sequence $x_0, x_1, \ldots , x_n\in E$ with $x_0=x$, $x_n=y$ and $n\leq \kappa |y-x|$ such that $x_j-x_{j-1}\in {\cal E}_0$ for all $j\in\{1,\ldots,n\}$. 
\end{itemize} 
\item[(C1')]   there is an open convex cone ${\cal C}$  in $\R^d$ with a vertex in the origin $0\in\R^d$, such that $E =  \overline{{\cal C}}\cap \Z^d$, where $\overline{\cal C}$ denotes the closure of ${\cal C}$ in $\R^d$. 
\item[(C2)] the jump generating  function $R:\Z^d \to \R\cup\{+\infty\}$ defined by 
\[
R(\alpha) ~=~ \sum_{x\in\Z^d} \mu(x) e^{\langle \alpha, x\rangle}, \quad \alpha\in\R^d,  
\] 
achieves its minimum at $0\in\R^d$ and for any $\alpha\in\R^d\setminus\{0\}$, $R(\alpha) > R(0) = 1$. 
\end{enumerate} 
}
\noindent
Remark that the last condition is satisfied for integrable and centered random walks, when 
\[
\sum_{x\in\Z^d} x\mu(x) ~=~0,
\]
and also for non integrable random walks if $R(\alpha) = + \infty$ for all $\alpha\not= 0$. 

Our next result proves that under the above assumptions, the hitting probabilities 
\[
Q(z,y) ~=~\P_z(X(t) = y \; \text{for some} \; t < \tau_\vartheta), \quad x,y\in E. 
\]
are slowly varying along any vector $u\in E$. 

\begin{prop}\label{prC-1} Under the hypotheses (C0'), (C1') and (C2),  for any $u\in E$ and uniformly on $x\in E$,   
\be\label{equation1-61}
\lim_{n\to\infty} \displaystyle{\frac{1}{n}} \log Q(x, x + n u) ~=~ \lim_{n\to\infty} \displaystyle{\frac{1}{n}} \log Q(x +nu, x) = 0,
\ee 
\end{prop}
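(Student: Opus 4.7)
The upper bound $\limsup_n n^{-1}\log Q(x,x+nu)\leq 0$ is immediate from $Q\leq 1$; the real work is the matching lower bound, uniform in $x$, together with its analogue for $Q(x+nu,x)$. I would establish this by constructing an explicit path strategy of probability $e^{-o(n)}$ that carries the killed walk from $x$ to $x+nu$ (respectively $x+nu$ to $x$) without leaving $E$.

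Fix an interior direction $v_0\in{\cal C}$ and exponents $1/2<\alpha<1<\beta<2\alpha$ (for instance $\alpha=3/4$, $\beta=5/4$); set $R_n=\lfloor n^{\alpha}\rfloor$, $T_n=\lfloor n^{\beta}\rfloor$, $y_0=x+R_n v_0$, and $y_n=y_0+nu=x+nu+R_nv_0$. Because $v_0$ is in the interior of ${\cal C}$, both $y_0$ and $y_n$ belong to $E$ at distance at least $cR_n$ from $\partial{\cal C}$ for a constant $c>0$ depending only on $v_0$; by convexity of ${\cal C}$ the same lower bound holds for every point on the segment $[y_0,y_n]$, so the Euclidean tube ${\cal U}_n$ of radius $cR_n/2$ around this segment is contained in $E$. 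Using the strong Markov property I split the passage $x\to x+nu$ into three phases: (i) an \emph{ascent} $x\to y_0$, (ii) a \emph{transit} $y_0\to y_n$ that stays inside ${\cal U}_n\subset E$, and (iii) a \emph{descent} $y_n\to x+nu$, and I multiply the three probabilities.

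Phases (i) and (iii) have displacements of norm $O(R_n)$, and by (C0') admit paths in $E\cap\Z^d$ of length $\leq\kappa R_n|v_0|$ with every step in ${\cal E}_0$; setting $\mu_0=\min_{z\in{\cal E}_0}\mu(z)>0$, their probability is bounded below by $\mu_0^{\kappa R_n|v_0|}=e^{-O(R_n)}=e^{-o(n)}$, uniformly in $x$. For phase (ii) I combine a local large deviation estimate for the unkilled walk $(S(t))$ with a bridge argument. Under the Cram\'er case of (C2) the rate function $I$ satisfies $I(0)=0$ and $\nabla I(0)=0$, hence $I(y)=o(|y|)$ as $y\to 0$, and
\[
\P_{y_0}\bigl(S(T_n)=y_n\bigr)\;\geq\;\exp\bigl(-T_nI(nu/T_n)-o(T_n)\bigr)\;=\;e^{-o(n)},
\]
because $T_n/n\to\infty$. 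Conditionally on $S(T_n)=y_n$ the trajectory resembles a Brownian bridge with transverse fluctuations of order $\sqrt{T_n}=n^{\beta/2}\ll R_n=n^{\alpha}$, so a Gaussian maximal inequality for the bridge gives $\P(S(t)\in{\cal U}_n\;\text{for all}\;0\leq t\leq T_n\mid S(T_n)=y_n)\geq 1/2$ for $n$ large. Multiplying the three bounds yields $Q(x,x+nu)\geq e^{-o(n)}$ uniformly in $x$; the reverse direction $Q(x+nu,x)$ is handled by reading the construction backwards, with $I(-nu/T_n)=o(nu/T_n)$ replacing $I(nu/T_n)$.

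The main obstacle is the non-Cram\'er regime of (C2), where $R(\alpha)=+\infty$ for every $\alpha\neq 0$ and the Legendre-dual rate function collapses to $I\equiv 0$, so the LDP lower bound above becomes vacuous. I would handle this case by running the same three-phase argument for the truncated step distribution $\mu_{K_n}=\mu\cdot\mathbf{1}_{\{|z|\leq K_n\}}/\mu(\{|z|\leq K_n\})$ with $K_n\to\infty$ slowly: the truncated walk is Cram\'er and still verifies (C2), and the multiplicative truncation cost $\mu(\{|z|\leq K_n\})^{T_n}$ can be absorbed into the $e^{-o(n)}$ error by choosing $K_n$ so that $1-\mu(\{|z|\leq K_n\})=o(n/T_n)$. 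Translation invariance of the walk guarantees that every bound above depends on $x$ only through the displacements $R_nv_0$ and $nu$, so the convergence in \eqref{equation1-61} is uniform in $x\in E$.
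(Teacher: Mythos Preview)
Your three–phase path construction is a genuinely different route from the paper's proof. The paper does not build explicit tubes or invoke random–walk bridges; instead it first reduces to the starting point $0$ via the monotonicity $Q(x,x+nu)\geq Q(0,nu)$, then proves a full large deviation principle for the \emph{Green function} of a non-centred walk (Proposition~\ref{pr7-100}, under a Cram\'er–type hypothesis (C2$'$)), and finally passes from (C2) to (C2$'$) by truncating $\mu$ to a \emph{sub}-probability $\mu_k$ and applying an exponential twist by a point $\alpha_k\in\partial D_k=\{R_k=1\}$. The twisted walk is stochastic with nonzero drift, so (C2$'$) applies; untwisting yields $\liminf n^{-1}\log Q(0,nu)\geq -\sup_{\alpha\in D_k}\langle\alpha,u\rangle$, and letting $k\to\infty$ the compact sets $D_k$ shrink to $\{0\}$ under (C2). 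Your approach trades this change-of-measure machinery for concrete path estimates, which is appealing, but the paper's twist is exactly what rescues the heavy-tailed case, and that is where your argument breaks.

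The gap is in your non-Cram\'er paragraph. You assert that the normalised truncation $\mu_{K_n}=\mu\,\mathbf 1_{\{|z|\leq K_n\}}/\mu(\{|z|\leq K_n\})$ ``still verifies (C2)''. This is false in general: (C2) for a Cram\'er law is equivalent to the law being centred, and truncating a distribution need not preserve zero mean. When $R(\alpha)=+\infty$ for every $\alpha\neq 0$ the original walk may fail to have a first moment, and then the truncated mean $m_{K_n}$ can be large or oscillate as $K_n\to\infty$. Your phase~(ii) lower bound rests on $I_{K_n}(0)=0$ and $\nabla I_{K_n}(0)=0$, i.e.\ on $m_{K_n}=0$; without this, $T_n I_{K_n}(nu/T_n)$ is of order $T_n|m_{K_n}|^2$, which is not $o(n)$. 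The paper circumvents exactly this obstruction by twisting the (sub-stochastic) truncation so as to \emph{create} a controlled drift, then removing the twist at the level of Green functions; that step has no analogue in your write-up. A secondary point: in the Cram\'er part your bridge claim $\P(\,S(t)\in\mathcal U_n\text{ for all }t\leq T_n\mid S(T_n)=y_n\,)\geq 1/2$ is plausible (your choice $\beta<2\alpha$ is the right scaling) but is stated as a heuristic; it would need a quantitative maximal inequality for the random-walk bridge, and the ``$o(T_n)$'' error you quote in the local LDP must really be shown to be $o(n)$ (it is, being $O(\log T_n)$, but as written $o(T_n)$ with $T_n=n^\beta$, $\beta>1$, does not imply $o(n)$).
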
 
The proof of this proposition is given in Section~\ref{proof_of_prC-1}. When combined with Theorem~\ref{th1} and Theorem~\ref{th3}  this result provides the following statement.
\begin{cor}\label{corC-2} Under the hypotheses (C0'), (C1') and (C2), any harmonic function $h$ of  $(X(t))$ is super harmonic for the ladder height process $(H_n)$ and satisfies the relation 
\be\label{eq-h}
h(x+u) ~=~ h(x) + \E_{x+u}\bigl(h(X(\eta_u)), \, \eta_u < \infty\bigr), \quad \forall \; x,u\in E,
\ee
and 
\[
h = h(0) V + \tilde{h} 
\]
with a harmonic, for the ladder height process $(H_n)$, function 
\be\label{eq-hp}
\tilde{h}(x) = \lim_{n\to\infty} \E_x(h(X(t_n)), \, t_n < \infty) ~\geq~0.
\ee
If moreover $\E_x(\tau_\vartheta) = +\infty$, then for any harmonic function $h$, the function \eqref{eq-hp} is equal to zero and $h$ is proportional to $V$. Otherwise, the function \eqref{eq-hp} is non trivial. 
\end{cor}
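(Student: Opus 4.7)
The plan is to reduce Corollary~\ref{corC-2} to the abstract Theorems~\ref{th1} and~\ref{th3} combined with Proposition~\ref{prC-1}, which supplies the slow-variation hypothesis in the present homogeneous setting. The content of the corollary is essentially a translation, into the intrinsic random-walk language, of statements already established in Section~3; the work is to verify that all abstract hypotheses transfer and to match notations via Propositions~\ref{pr-intuition} and~\ref{prop1-1}.

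First I would verify that (C0'), (C1'), (C2) entail (A0), (B1), (B2) of Section~3. Condition (A0) follows from (C0'): transience is explicit and irreducibility on $E$ comes from the communication condition (C0')(a)-(b). Condition (B1) holds with $(\mathcal{E},\star) = (\Z^d,+)$, and $(E,+)$ is a semigroup with identity $0$ since $E = \overline{\mathcal{C}}\cap\Z^d$ for a convex cone with vertex $0$, which is closed under addition by (C1'). Since $p(x,y) = \mu(y-x)$ is translation invariant, (B2) holds with \emph{equality} $p(x+u,y+u)=p(x,y)$, so in particular the hypotheses of Propositions~\ref{pr-intuition} and~\ref{prop1-1} are met. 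Proposition~\ref{prC-1} then delivers uniform slow variation of the hitting probabilities along every $u\in E$, so Theorems~\ref{th1} and~\ref{th3} apply.

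Next, applying Theorem~\ref{th1} produces the abstract functional relation $h(x+u) = h(x) + GA_u h(x)$. To recast $GA_u h$ in the intrinsic form (\ref{eq-h}) I would invoke Proposition~\ref{pr-intuition}, valid because (B2) holds with equality, which gives
\[
GA_u h(x) \;=\; \E_{x+u}\bigl(h(X(\eta_u));\, \eta_u < \tau_\vartheta \leq +\infty\bigr).
\]
Since $h(\vartheta)=0$ and $\eta_u\leq \tau_\vartheta$ (the exit from $E+u$ occurs no later than the exit from $E$), the set $\{\eta_u = \tau_\vartheta\}$ contributes nothing, so this expectation equals $\E_{x+u}(h(X(\eta_u));\,\eta_u<\infty)$, which is exactly (\ref{eq-h}).

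Finally, the super-harmonicity of $h$ for $(H_n)$ and the decomposition $h = h(0)V + \tilde{h}$ with $\tilde{h}$ harmonic for $(H_n)$ are immediate consequences of Theorem~\ref{th3}, as is the dichotomy: $\tilde{h}\equiv 0$ and $h\propto V$ when $\E_\cdot(\tau_\vartheta)=+\infty$, and $\tilde h$ non-trivial otherwise. To rewrite $\tilde{h}(x) = \lim_n \E_x(h(H_n);\,\mathcal{T}_\vartheta > n)$ as (\ref{eq-hp}) I would use Proposition~\ref{prop1-1}, which identifies $H_n$ in distribution with $X(t_n)$ on $\{t_n<\infty\}$ and with $\vartheta$ otherwise; together with $h(\vartheta)=0$ this yields $\E_x(h(H_n);\,\mathcal{T}_\vartheta>n) = \E_x(h(X(t_n));\,t_n<\infty)$. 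There is no genuinely new obstacle: the only step that requires care is that the equality version of (B2) is what unlocks both the $\eta_u$-description of $GA_u$ and the $X(t_n)$-description of $H_n$; if any difficulty arises it would be in the independent input Proposition~\ref{prC-1}, not in the assembly performed here.
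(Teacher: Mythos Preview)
Your proposal is correct and follows exactly the approach the paper intends: the paper merely states that Corollary~\ref{corC-2} is obtained by combining Proposition~\ref{prC-1} with Theorems~\ref{th1} and~\ref{th3}, and you have spelled out this assembly in detail, including the translations via Propositions~\ref{pr-intuition} and~\ref{prop1-1} that convert $GA_u h$ and $(H_n)$ into the intrinsic expressions involving $\eta_u$ and $X(t_n)$. Your extra care in passing from $\{\eta_u<\tau_\vartheta\}$ to $\{\eta_u<\infty\}$ using $h(\vartheta)=0$ and $\eta_u\le\tau_\vartheta$ is a valid refinement the paper leaves implicit.
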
 

A straightforward consequence of Corollary~\ref{corC-1} and Corollary~\ref{corC-2} is the following statement. 

\begin{cor} Suppose that the conditions (C0'), (C1') and (C2) are satisfied, the steps of the random walk $(S(t))$ are integrable:
\[
\sum_{x\in \Z^d} |x| \mu(x) < +\infty,
\]
and let 
\[
m ~=~ \sum_{x\in\Z^d} x\mu(x) \not= 0 \quad \text{and} \quad m\in {\cal C}.
\]
Then for any $x\in E$, $V(x) = \P_x(\tau_\vartheta = +\infty)/\P_0(\tau_\vartheta = +\infty)$  and any harmonic function $h$ of the killed random walk $(X(t))$ is proportional to $V$.
\end{cor} 
\begin{proof} If the steps of the random walk $(S(t))$ are integrable, for any $x\in\Z^d$, $\P_x$- almost surely  
\[
\lim_{n\to\infty} S(n)/n = m \in {\cal C}.  
\]
The cone ${\cal C}$ being open, from this it follows that for any $x\in E$, $\P_x$-almost surely 
\[
\sup\{n \in\N~: S(n)\not\in {\cal C}\} < \infty 
\]
and consequently, $\P_x$-almost surely 
\[
\sup\{N :~S(n) + N m \not\in {\cal C}  \quad \text{for some} \quad n\in\N\} ~<~\infty. 
\]
This proves that for any $x\in E$ there is $N \geq 0$ such that 
\[
P_x( S(n) + Nm \in {\cal C}, \; \forall n\geq 0) > 0.
\] 
For $y\in (C + N m )\cap \Z^d$,  one gets therefore 
\begin{align*}
\P_{x+ y} (S(n) \in {\cal C}, \; \forall n\geq 0) &= P_x( S(n) + y \in {\cal C}, \; \forall n\geq 0) \\ & \geq P_x( S(n) + Nm \in {\cal C}, \; \forall n\geq 0) > 0
\end{align*} 
because $C- N m \subset C-y$, and consequently, 
\[
\P_{x+ y}(\tau_\vartheta = +\infty) \geq \P_{x+ y} (S(n) \in {\cal C}, \forall n\geq 0)  > 0
\]
The random walk $(X(t)$ being irreducible on $E = \Z^d\cap E$, from this it follows that the function $x\to \P_x(\tau_\vartheta = +\infty)$ is non zero everywhere on $E$ and in prticular, the exit time $\tau_\vartheta$ is non-integrable. Using Corollary~\ref{corC-1} we conclude therefore that the function $V$ is harmonic for the killed random walk $(X(t))$, 
\[
V(x) ~=~ \lim_{n\to\infty} {\P_x(\tau_\vartheta > n)}/{\P_0(\tau_\vartheta > n)} ~=~ {\P_x(\tau_\vartheta + \infty)}/{\P_0(\tau_\vartheta = \infty)}, 
\]
for any $x\in E$ and by Corollary~\ref{corC-2}, any harmonic function $h$ of $(X(t))$ is proportional to $V$. 
\end{proof} 

Recall that for centered random walks, i.e. when 
\[
\sum_{x\in \Z^d} x \mu(x) ~=~0, 
\]
under some additional moment conditions on the jump distribution $\mu$,  the asymptotic of the tail distribution of $\tau_\vartheta$ were obtained by Denisov and Wachtel~\cite{Denisov-Wachtel}. They construct a harmonic function ${\cal V}$ by using the harmonic function of the corresponding diffusion approximation, and they proved that for any initial position $x$, 
\be
\P_x(\tau_\vartheta > n) ~\sim~ \kappa {\cal V}(x) n^{-p/2} 
\ee
with some  constant $\kappa >0$ and some $p>0$ depending on the cone and also on the covariance matrix of the process. This result provides the pointwise convergence 
\be\label{p-w-onvergence} 
\lim_{n\to\infty}\P_x(\tau_\vartheta > n)/\P_0(\tau_\vartheta > n) = {\cal V}(x), \quad x\in E. 
\ee
Under the hypotheses of the paper ~\cite{Denisov-Wachtel}, the random walk $(X(t))$ conditioned on the events $\{\tau_\vartheta >n\}$ to stay in the cone, is therefore the ${\cal V}$-transform of $(X(t))$, see for instance~\cite{Bertoin-Doney} for more details. Unfortunately, the representation of the function ${\cal V}$ obtained in  ~\cite{Denisov-Wachtel} is rather implicit and the probabilities related to the corresponding conditioned random walk are difficult to investigate. With our approach, under the hypotheses (C0') and (C1')  together with the hypotheses of ~\cite{Denisov-Wachtel}, by Corollary~\ref{corC-1} and~\ref{corC-2}, we obtain for the harmonic function ${\cal V}$ constructed  in ~\cite{Denisov-Wachtel} the following properties~: 
\begin{itemize}
\item[--]  For any $x,u\in E$, the function ${\cal V}$ satisfies the equality
\[
{\cal V}(x+u) ~=~ {\cal V}(x) + \E_{x+u}\bigl({\cal V}(X(\eta_u)), \, \eta_u < \infty\bigr).
\]
\item[--] For any $x,u\in E$, the quantity ${\cal V}(x)/{\cal V}(x+u)$ is equal to the probability that the random walk conditioned on the sequence of events $\{\tau_\vartheta > n\}$ to stay in $E$, starting at $x+u$ never exists from the set $E + u$. 
\item[--] In particular, for any $x\in E$, the quantity $1/{\cal V}(x)$ is equal to the probability that the  conditioned random walk starting at $x$ never exits from $E+x$, 
\item[--] If $p\leq 2$, then the function ${\cal V}$ is identical to the renewal function $V$,
\item[--] If $p >2$, then ${\cal V} \!=\! V + \tilde{h} = \E_\cdot(\tau_\vartheta)/\E_0(\tau_\vartheta) + \tilde{h}$ with a non trivial harmonic for the ladder height process function 
$
\tilde{h} \!=\! \lim_{n\to\infty} \E_\cdot({\cal V}(X(t_n)))$
\end{itemize} 

\medskip
\noindent
{\bf Example. }  Consider a two dimensional irreducible random walk $X(t) = (X_1(t),X_2(t))$ in $\Z^2_+$ killed upon the first exit from the positive quadrant $\Z_+^2$, with transition probabilities $\P_x(X(1)=y) = p(x,y) = \mu(y-x)$, $x,y\in\Z_+^2$ such that 
\[
\sum_{x\in\Z^2} x \mu(x) = 0.
\]
Assume moreover that 
\[
\sum_{x=(x_1,x_2) \in \Z^2} x_1^2 \mu(x) = \sum_{x=(x_1,x_2) \in \Z^2} x_2^2 \mu(x) = 1 \quad \text{and} \quad \sum_{x=(x_1,x_2) \in \Z^2} x_1x_2  \mu(x)  = \rho\in ]-1,1[.
\]
In this case $p= \pi/\arccos(-\rho)$ (see example 3 of the paper ~\cite{Denisov-Wachtel}), and if 
\[
\sum_{x\in\Z^2} \vert x \vert^{p} \mu(x) ~<~+ \infty, 
\]
then 
\[
\P_x(\tau_\vartheta > n) ~\sim~ \kappa {\cal V}(x) n^{-\pi/(2\arccos(-\rho))}, \quad x\in\Z_+^2 
\]
Hence, the exit time $\tau_\vartheta = \inf\{n \geq 0 : X(n)\not\in \Z_+^2\}$ is integrable if and only if $\rho < 0$.  Using our results one gets therefore the following statements : 
\begin{itemize}
\item[--] if $\rho \geq 0$, then,  up to multiplications by constants, the renewal function $V$ is the unique harmonic function for the Markov chain $(X(t))$, and for  any $x\in \Z_+^2$,
\[
V(x) ~=~ \lim_{n\to\infty} {\P_x(\tau_\vartheta > n)}/{\P_0(\tau_\vartheta > n)} ~=~ {\cal V}(x); 
\]
\item[--] if $\rho < 0$, then  the  renewal function $V$ is  potential for the Markov chain $(X(t))$,  
\[
V(x) ~=~ {\E_x(\tau_\vartheta)}/{\E_0(\tau_\vartheta)}  ~  <  ~ \liminf_{n\to\infty} ~{\P_x(\tau_\vartheta > n)}/{\P_0(\tau_\vartheta > n)} ~=~ {\cal V}(x) \quad \forall x\in\Z_+^2
\]
where $\tilde{h} =  {\cal V} - V$ is a non trivial harmonic for ladder height process function.  
\end{itemize} 
Moreover, in a particular case, when $\rho = 0$, the function $h:\Z_+^2\to\R_+$ defined by 
\[
h(x_1,x_2) = (x_1 + 1)(x_2+1), \quad x=(x_1,x_2)\in\Z_+^2,  
\]
is clearly harmonic for $(X(t))$. In this particular case one concludes therefore  that $V = {\cal V} = h$ and for any $x=(x_1,x_2),u=(u_1,u_2)\in\Z_+^2$,  the probability that the random walk conditioned on the sequence of events $\{\tau_\vartheta > n\}$ to stay in $\Z_+^2$, starting at $x+u$ never exists from the set $\Z_+^2 + u$ is equal  to $(x_1+1)(x_2+1)/((x_1+u_1+1)(x_2+u_2+1))$.

\section{Proof of Theorem~\ref{th1} }\label{sec2} 
\subsection{Preliminary results.} We begin the proof of this theorem with the following lemma. It provides Harnack's inequality for super harmonic functions. This first  result is classical and is given here for convenience of the reader to avoid a confusion with another types of Harnack inequalities, see for instance the book of Woess~\cite{Woess}. 

\begin{lemma} \label{lem2-0} For any super harmonic function $f:E\to \R_+$ and $x,y\in E$,
\be\label{eq-Harnack} 
f(x) ~\geq~ Q(x,y)f(y).
\ee
\end{lemma} 
\begin{proof} Let $y\in E$ and denote by $\tau_y$ the first time when the Markov chain $(X(t))$ hits the state $y$: 
\[
\tau_y = \inf\{t \geq 0 :~ X(t) = y\}.
\]
If the function $f: E\to \R_+$ is super harmonic for $(X(t))$ then the sequence $(h(X(n)))$ is a super martingale relative to the natural filtration of $(X(t))$, and by the stopping time theorem, the sequence $(h(X(\min\{n,\tau_y\})))$ is also super martingale. This proves that for  any super harmonic function $h:E\to \R_+$, $n\geq 0$ and $x,y\in E$, 
\[
h(x) ~\geq~ \E(h(X({\min\{n,\tau_y\}}))) ~=~ h(y) \P_x(\tau_y \leq n) + \E(h(X(n)); \,  \tau_y > n) ~\geq~ h(y) \P_x(\tau_y \leq n)
\] 
Letting at the last inequality $n\to+\infty$, by monotone convergence theorem one gets \eqref{eq-Harnack}. 
\end{proof} 

\begin{lemma}\label{lem2-1} Suppose that (A0)  - (A2) are satisfied and let there exists $\delta > 0$ such that 
\be\label{eq30-1}
Q(x, x \star u) ~\geq~\delta \quad \forall x\in E.
\ee
Then the following assertions hold~:
\begin{itemize}
\item[(a)]  for any super harmonic function $f$, the function $T_uf$ is also super harmonic and  satisfies the inequality $
\delta T_u f ~\leq~ f$;  
\item[(b)] for any potential function $g$, the function $T_u g$ is also potential~: 
\[
T_u g = G\varphi \quad \text{with} \quad \varphi~=~ (\Id-P)T_u g.
\] 
\item[(c)] for any  minimal harmonic function $h$, there exists $\gamma \geq 0$ such that for any $n\in\N$, $n\geq 1$,  the function $T_u^n h - \gamma^n h$ is potential and 
\be\label{eq30-2} 
T_u^{n} h - \gamma^n h ~=~ G A_{u^{\star n}} h ~=~ \sum_{k=1}^n \gamma^{k-1} T_u^{n-k} GA_u h 
\ee
\end{itemize}
\end{lemma}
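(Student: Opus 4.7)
The plan is to exploit systematically the commutation identity $T_uP = PT_u + A_u$ stated in the preliminaries, together with the Riesz decomposition, and to bootstrap from (a) to (c). The hitting probability lower bound \eqref{eq30-1} is used only once, in (a), to get the Harnack-type inequality $\delta T_u f \leq f$; everything afterwards is algebraic manipulation with the identity above.

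For part (a), let $f$ be super harmonic. Applying the operator identity to $f$ gives $T_u P f = P T_u f + A_u f$, and because all entries $a_u(x,y)$ are non-negative (by (A1)--(A2)) we have $A_u f \geq 0$. Since $P f \leq f$ implies $T_u P f \leq T_u f$, combining yields $P T_u f \leq T_u f - A_u f \leq T_u f$, so $T_u f$ is super harmonic. For the inequality $\delta T_u f \leq f$, I would use the super-mean-value property along the first hitting time $\tau_{x\star u}$ of $x\star u$: since $f$ is super harmonic and non-negative,
\[
f(x) \;\geq\; \E_x\bigl(f(X(\tau_{x\star u})),\; \tau_{x\star u} < \tau_\vartheta\bigr) \;=\; Q(x,x\star u)\, f(x\star u) \;\geq\; \delta\, T_u f(x).
\]

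For (b), suppose $g = G\phi_0$ with $\phi_0 \geq 0$. Then $g$ is super harmonic, so by (a) $T_u g$ is super harmonic with $T_u g \leq g/\delta$. The Riesz decomposition gives $T_u g = h_0 + G\bigl((\Id - P) T_u g\bigr)$ with $h_0 = \lim_n P^n T_u g$ the harmonic component. Since $g = G\phi_0$ is a potential, $P^n g = \sum_{k\geq n} P^k \phi_0 \to 0$ pointwise, and then $h_0 \leq (1/\delta)\lim_n P^n g = 0$. Hence $h_0 = 0$ and $T_u g = G\bigl((\Id - P) T_u g\bigr)$, which is exactly the claim.

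For (c), the core step: let $h$ be minimal harmonic. By (a), $T_u h$ is super harmonic and $\delta T_u h \leq h$, and Riesz gives $T_u h = \tilde h + \tilde g$ with $\tilde h$ harmonic and $\tilde g$ potential. Then $\delta \tilde h \leq \delta T_u h \leq h$ and $\delta \tilde h$ is harmonic, so minimality of $h$ forces $\delta \tilde h = c\,h$ for some $c \geq 0$; setting $\gamma = c/\delta$ gives $\tilde h = \gamma h$. The potential component is computed from $\phi := (\Id - P) T_u h = T_u h - P T_u h$, and the commutation identity combined with $P h = h$ gives $T_u h = T_u P h = P T_u h + A_u h$, so $\phi = A_u h$ and $T_u h - \gamma h = G A_u h$. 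This proves \eqref{eq30-2} for $n=1$. The general case proceeds by induction: assuming $T_u^{n-1} h = \gamma^{n-1} h + G A_{u^{\star(n-1)}} h$, apply $T_u$ and substitute $T_u h = \gamma h + G A_u h$:
\[
T_u^n h \;=\; \gamma^{n-1} T_u h + T_u G A_{u^{\star(n-1)}} h \;=\; \gamma^n h + \gamma^{n-1} G A_u h + T_u G A_{u^{\star(n-1)}} h,
\]
and by (b) both $\gamma^{n-1} G A_u h$ and $T_u G A_{u^{\star(n-1)}} h$ are potentials, so the harmonic part of $T_u^n h$ is $\gamma^n h$ and $T_u^n h - \gamma^n h$ is a potential. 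Identifying the density $(\Id - P) T_u^n h$ using the commutation identity iterated once more (or $T_{u^{\star n}} = T_u^n$ and the definition of $A_{u^{\star n}}$) yields $T_u^n h - \gamma^n h = G A_{u^{\star n}} h$. The second equality in \eqref{eq30-2} then unfolds by induction from the recursion above, shifting the index. The main obstacle I anticipate is the clean invocation of minimality: we must make sure that the inequality $\delta \tilde h \leq h$ places us in the situation where the minimality axiom genuinely applies (harmonicity of $\delta \tilde h$, not merely super harmonicity), and subsequently that the constant $\gamma$ obtained from one step persists through the induction — which is guaranteed precisely because (b) ensures that every newly produced summand $T_u^k G A_u h$ is a potential and contributes nothing to the harmonic component.
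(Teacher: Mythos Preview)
Your proof is correct and follows essentially the same approach as the paper's. The only organizational differences are cosmetic: in (a) you invoke the commutation identity $T_uP=PT_u+A_u$ while the paper writes out the inequality $PT_uf(x)\le\sum_z p(x\star u,z)f(z)\le f(x\star u)$ directly, and in (c) you run an explicit induction on $n$ whereas the paper applies Riesz to $T_u^nh$ for each $n$, iterates the $n=1$ relation, and compares the two decompositions by uniqueness---but these are the same argument.
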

\begin{proof}   Suppose that a function $f: E\to \R_+$ is super harmonic. Then by (A2),
\begin{align*}
P T_u f (x) &=~ \sum_{y\in E} p(x,y) f(y \star u) ~\leq~ \sum_{y\in E} p(x \star u,y\star u)f(y \star u) \\ &\leq~ \sum_{z\in E} p(x\star u, z)f(z) ~\leq~ f(x\star u) ~=~T_uf (x), \quad \forall x\in E. 
\end{align*}
The function $T_u f$ is therefore also super harmonic. Moreover,  from  \eqref{eq30-1} it follows that 
\[ 
\delta T_u f (x) = \delta f(x\star u) ~\leq~  Q(x, x \star u) f(x\star u), \quad \forall x\in E.  
\]
and by \eqref{eq-Harnack}, 
\[
 Q(x, x \star u) f(x\star u) ~\leq~ f(x), \quad \forall x\in E, 
\]
from which it follows  that $\delta T_u f ~\leq~ f$.  The first assertion of our lemma is therefore proved. 

\medskip

To prove the second assertion, recall that every potential function is super harmonic, and a super harmonic function $g$  is potential if and only if $\lim_n P^n g = 0$.  For any potential function $g$, the first assertion of our lemma proves that the function $T_ug$ is super harmonic and 
\[
\lim_n P^nT_u g (x) ~\leq~ \frac{1}{\delta} \lim_n P^n g (x) ~=~ 0, \quad \forall x\in E.
\]
For a potential function $g$, the function $T_u g$ is therefore also potential: $T_ug = G\varphi$ with $\varphi = (\Id-P)T_ug$.

To prove the third assertion of Lemma~\ref{lem2-1} we use the Riesz decomposition theorem. Suppose that  $h$ is a minimal harmonic function. Then from  the first assertion of our lemma  it follows that for any $n\in\N$, the function $T^n_u h = T_{u^{\star n}} h$ is super harmonic and satisfies the inequality $\delta^n T_{u^{\star n}} h ~\leq~ h$. Using the Riesz decomposition theorem,  we conclude that there exist a harmonic function $h_n ~=~\lim_k P^k T_{u^{\star n}} h$ and a potential function $G\varphi_n$ such that 
\[
T_{u^{\star n}} h ~=~ h_n + G\varphi_{n} 
\]
The last relation combined with the inequality  $\delta^n T_{u^{\star n}} h ~\leq~ h$, proves that $\delta^n h_n \leq h$. The harmonic function $h$ being minimal, from this it follows that $h_n ~=~\gamma_{n} h$ for  some $\gamma_{n} \geq 0$, and consequently, 
\be\label{eq30-3}
T_{u^{\star n}} h ~=~ \gamma_{n} h + G \varphi_{n}.
\ee
By  iterating the last equality  with $n=1$, one gets 
\be\label{eq30-4}
T_{u^{\star n}} h ~=~ T^n_u h ~=~ \gamma_1^n h + \sum_{k=1}^n \gamma_1^{k-1} T_u^{n-k} G\varphi_1.
\ee
The second assertion of our lemma proves that  the function 
\[
\sum_{k=1}^n \gamma_1^{k-1} T_u^{n-k} G\varphi_1
\]
is potential  as a sum of potential functions. Hence, by uniqueness of the Riesz decomposition, from \eqref{eq30-3} and \eqref{eq30-4} it follows that 
\[
\gamma_{n} ~=~ \gamma_1^n \quad \text{and} \quad G\varphi_n ~=~ \sum_{k=1}^n \gamma_1^{k-1} T_u^{n-k} G\varphi_1. 
\]
Finally,  recall that  $(\Id-P) h = 0$ and according to the definition of $A_{u^{\star n}}$, 
\[
A_{u^{\star n}}h = T_{u^{\star n}}P h - PT_{u^{\star n}} h.
\]
Hence, from \eqref{eq30-3} one gets  
\[
\varphi_{u^{\star n}} ~=~ (\Id-P) T_{u^{\star n}}h ~=~ T_{u^{\star n}}P h - PT_{u^{\star n}} h ~=~ A_{u^{\star n}} h, 
\]
and consequently, \eqref{eq30-2} holds with $\gamma = \gamma_1$. Lemma~\ref{lem2-1} is therefore proved. 
\end{proof}

\begin{lemma}\label{lem2-2} Under the hypotheses of Theorem~\ref{th1}, there exists $\delta >0$ for which \eqref{eq30-1} holds. 
\end{lemma}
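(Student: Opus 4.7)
The plan is to derive a uniform positive lower bound on $Q(x,x\star u)$ via a two-step strong Markov decomposition that compares direct hitting of $x\star u$ with a detour through the ``far'' point $x\star u^{\star n}$, and then to close the argument with the uniform subexponential lower bound supplied by the slow variation hypothesis.

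First, for fixed $n\geq 2$ and arbitrary $x\in E$, I would apply the strong Markov property at the first hitting time $\sigma_n$ of the two-point set $\{x\star u,\,x\star u^{\star n}\}$. Decomposing the event $\{X \text{ reaches } x\star u^{\star n}\}$ according to the value of $X(\sigma_n)$ yields
\[
Q(x,x\star u^{\star n}) \;=\; p_n + f_n\,Q(x\star u,x\star u^{\star n}),
\]
where $p_n=\P_x(X(\sigma_n)=x\star u^{\star n})$ and $f_n=\P_x(X(\sigma_n)=x\star u)$. Using $f_n \leq Q(x,x\star u)$ and $Q(x\star u,x\star u^{\star n}) \leq 1$, this gives the lower bound $p_n \geq Q(x,x\star u^{\star n}) - Q(x,x\star u)$.

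Next, applying the strong Markov property at $\sigma_n$ on the event $\{X(\sigma_n) = x\star u^{\star n}\}$, the chain then has conditional probability $Q(x\star u^{\star n},x\star u)$ of later reaching $x\star u$, so $Q(x,x\star u) \geq p_n\cdot Q(x\star u^{\star n},x\star u)$. Combining with the previous bound and rearranging (using $Q(x\star u^{\star n},x\star u)\leq 1$) gives the key inequality
\[
Q(x,x\star u) \;\geq\; \frac{1}{2}\, Q(x,x\star u^{\star n})\,Q(x\star u^{\star n},x\star u).
\]
To conclude, I would invoke the slow variation hypothesis: fixing any $\varepsilon>0$, there is $N=N(\varepsilon)$ such that uniformly in $x\in E$ and all $n\geq N$ one has $Q(x,x\star u^{\star n})\geq e^{-\varepsilon n}$; applying the same statement to $y=x\star u$ yields $Q(x\star u^{\star n},x\star u)=Q(y\star u^{\star(n-1)},y)\geq e^{-\varepsilon(n-1)}$ for $n\geq N+1$. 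Substituting at any fixed such $n$ produces the desired uniform $\delta>0$.

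The main conceptual obstacle is recognising that slow variation at ``large scale $n$'' can be leveraged to bound hitting probabilities at ``scale $1$'', which is exactly what the two strong Markov steps accomplish; once this is seen, the bookkeeping is routine. A minor technical caveat concerns the degenerate case where $u$ has finite order and $x\star u^{\star n}$ coincides with $x\star u$: the decomposition then trivialises, but the slow variation applied along the subsequence of $n$ with $x\star u^{\star n}=x\star u$ already furnishes the bound directly, since in that case $Q(x,x\star u)=Q(x,x\star u^{\star n})\geq e^{-\varepsilon n}$ uniformly in $x$ for some large $n$.
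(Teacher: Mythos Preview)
Your argument is correct and follows essentially the same strategy as the paper: pass through the far point $x\star u^{\star n}$ and bound each leg uniformly via the slow variation hypothesis. However, your two-point strong Markov decomposition is more elaborate than necessary. The elementary sub-multiplicativity
\[
Q(x,x\star u) \;\geq\; Q(x,x\star u^{\star n})\,Q(x\star u^{\star n},x\star u),
\]
which follows in one line from the strong Markov property at the first hitting time of $x\star u^{\star n}$, already gives the bound directly (and without the factor $\tfrac12$). The paper simply fixes $n$ with $Q(x,x\star u^{\star n})>c$ and $Q(y\star u^{\star(n-1)},y)>c$ uniformly in $x,y\in E$, and reads off $Q(x,x\star u)\geq c^2$ by writing $Q(x\star u^{\star n},x\star u)=Q(y\star u^{\star(n-1)},y)$ with $y=x\star u$.
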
 
\begin{proof}
Indeed, from  \eqref{equation1-2}  it follows that there exist $c > 0$ and $n > 0$, $n\in\N$, such that 
\[
Q(x, x \star u^{\star n}) ~>~ c \quad \text{and} \quad  Q(x \star u^{\star (n-1)}, x) ~>~ c, \quad \forall x\in E.
\]
When combined with (A1), these inequalities imply that 
\[
Q(x, x\star u) \geq Q(x, x\star u^{\star n}) Q(x\star u^{\star n}, x \star u) \geq Q(x, x \star u^{\star n}) Q(x\star u^{\star (n-1)}, x) \geq c^2
\]
for all $x\in E$. The last relation proves \eqref{eq30-1} with $\delta = c^2$ 
\end{proof} 

\medskip 

\subsection{Proof of Theorem~\ref{th1}.} 
Now we are ready to prove Theorem~\ref{th1}.  For this it is sufficient to show that under the hypotheses of Theorem~\ref{th1}, every non-zero harmonic function $h$ satisfies  \eqref{eq30-2} with $\gamma=1$. We begin our proof with the case when $h:E\to \R_+$ is a minimal harmonic function for $(X(t))$. 

Suppose that the conditions (A0)-(A2) are satisfied and let for some $u\in E$, \eqref{equation1-2} hold uniformly   on $x\in E$.  Then by  Lemma~\ref{lem2-2}, the inequalities \eqref{eq30-1} hold with some $\delta >0$, and consequently, by  Lemma~\ref{lem2-1}, the function $h$ satisfies the identities \eqref{eq30-2} with some $\gamma >0$. Since $GA_{u^{* n}} h\geq 0$, from \eqref{eq30-2}  it follows that for any $n\in\N$ and $x\in E$, 
\[
T_u^n h (x) \geq \gamma^n h(x).
\] 
Moreover, using \eqref{eq-Harnack} with $y= x \star u^{\star n}$ one gets 
\[
h(x)~\geq~ Q(x, x\star u^{\star n})  h(x \star u^{\star n}) ~=~ Q(x, x\star u^{\star n})  T_u^n h (x),  \quad \forall x\in E
\]
and consequently, 
\[
h(x) ~\geq~ Q(x, x\star u^{\star n}) \gamma^n h(x), \quad \forall x\in E. 
\]
Remark finally that by \eqref{eq-Harnack}, every non-zero harmonic function is strictly positive everywhere on $E$, and consequently, the above inequality implies that 
\[
1 ~\geq~ Q(x, x\star u^{\star n}) \gamma^n, \quad \forall x\in E. 
\]
When combined with  \eqref{equation1-2} the last inequality proves  that 
\[
\log \gamma \leq - \lim_n \frac{1}{n} \log Q(x, x\star u^{\star n}) ~=~ 0
\]
from which it follows that 
\be\label{eq30-5}
\gamma ~\leq~1. 
\ee
To prove that $\gamma \geq 1$,  remark that by \eqref{equation1-2}, for any $\varepsilon > 0$ there exists $n_\varepsilon > 0$ such that 
\[
Q(x \star u^{\star n}, x) ~\geq~ (1-\varepsilon)^n, \quad \forall n\geq n_\varepsilon. 
\]
Hence, for $n\geq n_\varepsilon$, using  again \eqref{eq-Harnack}, one gets 
\[
h(x \star u^{\star n}) ~\geq~ Q(x \star u^{\star n}, x) h(x) ~\geq~ (1-\varepsilon)^n h(x), \quad \forall x\in E.
\]
For $n\geq n_\varepsilon$, the function $T_{u^{\star n}} h - (1-\varepsilon)^n h$ is therefore positive and super  harmonic. Hence, using the same arguments as in the proof of Lemma~\ref{lem2-1}, we obtain 
\[
T_{u^{\star n}} h - (1-\varepsilon)^n h = \gamma_n' h + G \varphi_n'
\]
with some $\gamma'_n \geq 0$ and some potential function $G\varphi'_n$. By uniqueness of the Riesz decomposition, comparison of the above relation with \eqref{eq30-2} proves that for $n\geq n_\varepsilon$, 
\[
\gamma^n = (1 - \varepsilon)^n + \gamma_n' ~\geq~ (1-\varepsilon)^n, 
\]
and consequently, $\gamma \geq 1 - \varepsilon$. Since $\varepsilon > 0$ is arbitrary, from this it follows that $\gamma \geq 1$, and hence using \eqref{eq30-5} we conclude that 
\[
\gamma = 1. 
\]
For a minimal harmonic function $h$, Theorem~\ref{th1} is therefore proved.

\medskip

To extend this result   for an arbitrary harmonic function $h$, we use the Poisson-Martin representation theorem :  for any harmonic function $h$ there exist a positive measure $\nu_h$ on the minimal Martin boundary $\partial_m E$ relative to the Markov chain $(X(t))$ such that 
\be\label{eq30-6}
h(x) ~=~ \int_{\partial_m E} h_\theta(x) \, d\nu_h(\theta), \quad x\in E, 
\ee
where  $h_\theta$ is a minimal harmonic function corresponding to the point $\theta \in \partial_m E$. Since for every $\theta\in\partial_m E$, the following relations  
\[
h_\theta(x\star u) ~=~ T_u h_\theta ~=~ h_\theta ~+~ G A_{u} h_\theta, \quad  u\in E, 
\]
are already proved, from \eqref{eq30-6} by the Fubini-Tonelli  theorem, one gets \eqref{e1-2}. 

\section{Proof of Theorem~\ref{th2}} \label{proof-th2} 
\subsection{Preliminary results.} 

Consider  a sequence of functions $(f_n)$ defined by 
\be\label{eq40-1}
f_n(x) ~=~ \P_x(\tau_\vartheta > n)/\P _e(\tau_\vartheta >n), \quad x\in E. 
\ee
We begin the proof of Theorem~\ref{th2} with the following lemma. 

\begin{lemma}\label{lemma4-1} Suppose that  a Markov chain $(X(t))$ satisfies the conditions (A0), (B1) and (B2).  Then the following assertions hold :
\begin{enumerate}
\item[1)] The sequence of functions $(f_n)$ is relatively compact with respect to the topology of point-wise convergence.
\item[2)]  For any convergent subsequence $f_{n_k}$, the function 
\[
f(x) ~=~\lim_{k\to\infty} f_{n_k}(x), \quad x\in    E, 
\]
is super harmonic for the Markov chain $(X(t))$ and satisfies the inequalities 
\be\label{eq40-2}
  \E_x\bigl(f( H_1), \; {\cal T}_\vartheta > 1\bigr) ~\leq~ f(x) - 1 , \quad \forall x\in    E.
\ee
\item[3)] The function $V(x) = \E_x({\cal T}_\vartheta)$ is finite everywhere on $    E$ and satisfies the inequality 
\[
V(x) ~\leq~ \liminf_{n\to\infty} f_n(x), \quad \forall x\in    E. 
\]
\item[4)] The function $PV$ is finite everywhere on $E$ and satisfies the inequality  $PV(e) \leq 1$. 
\end{enumerate} 
\end{lemma}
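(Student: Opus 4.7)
The plan is to establish the four assertions in the order stated, with essentially all the work concentrated in assertion~(2).

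\textbf{Assertion (1).} Irreducibility (A0) supplies, for every $x\in E$, a path $e=x_0,\dots,x_{m_x}=x$ of positive probability $\pi_x>0$, whence the Markov property at time $m_x$ gives $\P_e(\tau_\vartheta>n+m_x)\geq\pi_x\,\P_x(\tau_\vartheta>n)$. Since $\P_e(\tau_\vartheta>\cdot)$ is non-increasing, this forces $f_n(x)\leq 1/\pi_x$, and Cantor's diagonal extraction on the countable set $E$ yields relative compactness of $(f_n)$ in the pointwise topology.

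\textbf{Assertion (2).} Super-harmonicity of any pointwise limit $f=\lim_k f_{n_k}$ follows from $Pf_n(x)=\P_x(\tau_\vartheta>n+1)/\P_e(\tau_\vartheta>n)\leq f_n(x)$ and Fatou. For the key bound~\eqref{eq40-2}, I start from the commutator identity $T_uP-PT_u=A_u$ and iterate to
\[
T_u P^n \;=\; P^n T_u + \sum_{k=0}^{n-1} P^k A_u P^{n-1-k}.
\]
Applying this to $\1$ at $e$ and writing $g_m(y)=\P_y(\tau_\vartheta>m)$, one obtains the renewal-type identity
\[
\P_u(\tau_\vartheta>n)-\P_e(\tau_\vartheta>n)\;=\;\sum_{k=0}^{n-1} P^k A_u g_{n-1-k}(e).
\]
Both $A_u$ and $P^k$ are positivity-preserving; moreover the coefficients $a_u(y,\cdot)$ are non-negative by (B2) and satisfy $\sum_z a_u(y,z)=P\1(y\star u)-P\1(y)\in[0,1]$, so $a_u(y,\cdot)$ is a sub-probability kernel. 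Since $g_{n-1-k}\geq g_n$, it follows that $P^k A_u g_{n-1-k}(e)\geq P^k A_u g_n(e)$, and dividing through by $\P_e(\tau_\vartheta>n)$ produces the basic inequality
\[
f_n(u)-1\;\geq\;\sum_{k=0}^{n-1} P^k A_u f_n(e).
\]
I then pass to the limit along the subsequence $(n_k)$. Bounded convergence through the sub-probability kernel $a_u(y,\cdot)$ gives $A_u f_{n_k}(y)\to A_u f(y)$ pointwise, and a second use of bounded convergence yields $P^t A_u f_{n_k}(e)\to P^t A_u f(e)$ for each fixed $t$. Fatou on the truncation index $t$ then delivers
\[
f(u)-1\;\geq\;\sum_{t=0}^\infty P^t A_u f(e)\;=\;GA_u f(e)\;=\;P_H f(u)\;=\;\E_u\bigl(f(H_1);\,{\cal T}_\vartheta>1\bigr),
\]
which is~\eqref{eq40-2}.

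\textbf{Assertions (3) and (4).} Iterating \eqref{eq40-2} via the Markov property for $(H_n)$ gives, by induction on $n$, the bound $\E_x(f(H_n);\,{\cal T}_\vartheta>n)\leq f(x)-\E_x(\min(n,{\cal T}_\vartheta))$. The left side is non-negative, so monotone convergence as $n\to\infty$ produces $V(x)=\E_x({\cal T}_\vartheta)\leq f(x)$ for every subsequential limit $f$, hence $V(x)\leq\liminf_n f_n(x)\leq 1/\pi_x<\infty$. For (4), combining $V\leq f$ with the super-harmonicity $Pf\leq f$ gives $PV(x)\leq Pf(x)\leq f(x)<\infty$ throughout $E$; at $x=e$ the a priori bound $Pf_n(e)=\P_e(\tau_\vartheta>n+1)/\P_e(\tau_\vartheta>n)\leq 1$ and Fatou give $Pf(e)\leq 1$, whence $PV(e)\leq 1$.

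\textbf{Main obstacle.} The delicate step is \eqref{eq40-2}: the telescoped identity naturally produces $f_{n-1-k}$ rather than $f_n$ on the right-hand side, so the sequence $(f_n)$ itself does not satisfy a clean one-step inequality against $P_H$. The monotonicity $g_{n-1-k}\geq g_n$ is the crucial trick that converts this into a partial sum of $P_H$ applied to $f_n$ itself, after which the passage to the limit proceeds by bounded convergence and Fatou on counting measure.
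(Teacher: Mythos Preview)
Your overall architecture matches the paper's proof almost exactly: both derive the telescoped identity $T_uP^n\1 = P^n\1 + \sum_k P^{k}A_u P^{n-1-k}\1$, use the monotonicity $g_{n-1-k}\ge g_n$ to replace each term by $P^kA_u g_n$, divide by $\P_e(\tau_\vartheta>n)$, pass to the limit, and then iterate \eqref{eq40-2} to bound $V$.

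There is, however, a genuine gap in your passage to the limit in assertion~(2). You invoke ``bounded convergence through the sub-probability kernel $a_u(y,\cdot)$'' to conclude $A_uf_{n_k}(y)\to A_uf(y)$ and then $P^tA_uf_{n_k}(e)\to P^tA_uf(e)$. This is not justified: the bounded convergence theorem requires the integrands $f_{n_k}(z)$ to be \emph{uniformly} bounded in $z$ (or dominated by an $a_u(y,\cdot)$-integrable function), whereas your a~priori bound $f_{n_k}(z)\le 1/\pi_z$ depends on $z$ and there is no reason why $\sum_z a_u(y,z)/\pi_z$ should be finite. The same objection applies to the outer kernel $P^t$.

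The paper avoids this by using Fatou's lemma instead of bounded convergence, which is all that is needed since \eqref{eq40-2} is a one-sided inequality. Concretely, positivity of the kernels $P^k$ and $A_u$ gives
\[
\liminf_{k\to\infty}\; P^tA_u f_{n_k}(e)\;\ge\; P^tA_u f(e)\quad\text{for each fixed }t,
\]
and a second application of Fatou (on counting measure in $t$) to the non-negative array $\1_{\{t<n_k\}}P^tA_uf_{n_k}(e)$ yields
\[
f(u)-1 \;=\;\lim_k\bigl(f_{n_k}(u)-1\bigr)\;\ge\;\liminf_k\sum_{t=0}^{n_k-1}P^tA_uf_{n_k}(e)\;\ge\;\sum_{t=0}^\infty P^tA_uf(e)\;=\;GA_uf(e).
\]
Once you replace the two appeals to bounded convergence by Fatou, your proof is correct and coincides with the paper's.
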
 
\begin{proof} Indeed, since for any $n\geq 0$ and $x\in    E$,  
\[
  f_n(x) = \P_x(\tau_\vartheta > n)/\P_e(\tau_\vartheta >n) \geq \P_x(\tau_\vartheta > n+1)/\P_e(\tau_\vartheta >n)  =  Pf_n(x) 
\]
the functions $f_n$ are super harmonic for $(X(t))$. Using  \eqref{eq-Harnack} we conclude therefore that  for any $x\in    E$,  
\[
Q(x,e)  ~=~ Q(x,e) f_n (e) ~\leq~f_n(x) ~\leq~ \frac{1}{Q(e,x)} f_n (e) ~=~ \frac{1}{Q(e,x)}, \quad \forall  n\geq 0, 
\] 
and consequently, the sequence of functions $(f_n)$ is relatively compact with respect to the topology of point-wise convergence. 

Consider now a convergent  subsequence $(f_{n_k})$.  Then the function $f = \lim_{k\to\infty} f_{n_k}$ is super harmonic as a limit of super harmonic functions. 
Before proving \eqref{eq40-2} in our general setting let us consider the case when (B2) holds with the equality. Recall that  by Proposition~\ref{prop1-1}, in this particular case, 
in distribution 
\[
H_1 = \begin{cases} X(t_1) &\text{if $t_1 < \infty$}\\
\vartheta, &\text{otherwise}.
\end{cases} 
\]
where $t_{1} = \eta_x ~=~ \inf\{ n > 0 : X(n) \not\in E \star X(0)\}$, and moreover for any $x\in E$ and $n\geq 0$, 
\[
P_x(t_1 > n) = \P_0(\tau_\vartheta > n).
\]
Hence, in the case when (B2) holds with the equality, \eqref{eq40-2}  follows by Fatou lemma from the inequalities 
\begin{align*} 
\P_x(\tau_\vartheta > n) &=  \P_0(\tau_\vartheta > n) + \sum_{y\in E\setminus(E\star x)}\sum_{k=1}^n \P_x(X(t_1) = y, \; t_1 = k) \P_y(\tau_\vartheta > n-k) \\ &\geq \P_0(\tau_\vartheta > n) + \sum_{y\in E\setminus(E\star x)}\sum_{k=1}^n \P_x(X(t_1) = y, \; t_1 = k) \P_y(\tau_\vartheta > n) 
\end{align*} 
To prove \eqref{eq40-2}  in our general setting recall that, according to the definition of $A_x$, $T_xP  =  PT_x + A_x$ and consequently, 
\[
T_xP^{k+1} ~=~ PT_xP^{k} + A_x P^{k}, \quad \forall k\in\N.   
\]
By iterating the last relation on gets 
\[
T_xP^n ~=~ P^nT_x + \sum_{k=1}^n P^{k-1}A_xP^{n-k}, \quad \forall n\geq 1,
\]
from which it follows that 
\[
T_xP^n\1 = P^nT_x\1 + \sum_{k=1}^n P^{k-1}A_xP^{n-k}\1 =  P^n\1 + \sum_{k=1}^n P^{k-1}A_xP^{n-k}\1, \quad \forall n\geq 1.
\]
Since for any $y\in    E$, 
\[
P^{n-k}\1(y) ~=~ \P_y(\tau_\vartheta > n - k) ~\geq~ \P_y(\tau_\vartheta > n) ~=~ P^n\1(y)
\]
and $f_n(y) = P^n\1(y)/P^n\1(e)$, we conclude that 
\[
T_x f_n ~\geq~ \sum_{k=1}^n P^{k-1}A_xf_n + f_n , \quad \forall n\geq 1, \; x\in    E, 
\]
and in particular, since $f_n (e) = 1$, 
\be\label{eq40-3} 
f_n(x) ~=~ T_x f_n (e) ~\geq~ \sum_{k=1}^n P^{k-1}A_xf_n (e) + 1 , \quad \forall n\geq 1, \; x\in    E. 
\ee
Letting therefore $n=n_k \to\infty$ and using Fatou lemma, one gets 
\be\label{eq40-4}
f(x) ~\geq~ \sum_{k=1}^\infty P^{k-1}A_xf (e) + 1 ~=~  GA_xf (e) + 1,  \quad \forall x\in    E.
\ee 
Since, by definition of the ladder process $( H_n)$, 
\[
\E_x(f( H_1), \, {\cal T}_\vartheta > 1) ~=~ GA_xf  (e),
\]
\eqref{eq40-4} proves \eqref{eq40-2}.  The first and the second assertions of our lemma are proved. 

To prove the last assertion, we use again \eqref{eq40-3}. By Fatou lemma,  from \eqref{eq40-3} it follows that for  $f = \liminf_n f_{n}(x)$   the inequalities \eqref{eq40-4}and \eqref{eq40-2} also hold. The iterates of \eqref{eq40-2} show that 
\[
0 ~\leq~ \E_x(f( H_n), \; {\cal T}_\vartheta > n) ~\leq~ f(x) - \sum_{k=0}^{n-1} \P_x({\cal T}_\vartheta > k),  \quad \forall n \geq 0, \, x\in    E. 
\]
Letting therefore $n\to\infty$ one gets 
\[
V(x) ~=~ \E_x({\cal T}_\vartheta) ~\leq~ f(x), \quad \forall x\in    E.
\]
The third assertion of our lemma  is therefore proved. Moreover, the function $f$ being superhamonic, the fourth assertion of Lemma~\ref{lemma4-1} follows from the above inequality in a straightforward way:
\[
PV(x) \leq Pf(x) \leq f(x) < \infty, \quad \forall x\in E
\]
and in particular,
\[
PV(e) \leq f(e) = 1. 
\]
\end{proof} 
As a straightforward consequence of the last assertion of Lemma~\ref{lemma4-1} we obtain 

\begin{lemma}\label{lemma4-1p} Under the hypotheses (A0), (B1) and (B2), for any $x\in E$, the functions $T_xPV$, $PT_xV$ and $A_xV$ are finite everywhere on $E$
\end{lemma}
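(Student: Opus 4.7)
The plan is to reduce all three finiteness claims to the assertion in Lemma~\ref{lemma4-1} that $PV$ is finite everywhere on $E$ (with in addition $PV(e)\leq 1$), together with the fact that $V$ itself is finite pointwise on $E$.

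First I would handle $T_xPV$ directly. By the semi-group hypothesis (B1), $y\star x\in E$ whenever $y,x\in E$. Consequently $T_xPV(y)=PV(y\star x)$ is the value of $PV$ at a point of $E$, so by the last assertion of Lemma~\ref{lemma4-1} it is finite. Hence $T_xPV$ is finite everywhere on $E$.

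Second I would invoke the operator identity $T_xP=PT_x+A_x$ recorded in Section~3 immediately after the definition of $A_u$. Applied at a point $y\in E$ to the non-negative function $V$, it reads
\[
T_xPV(y)=PT_xV(y)+A_xV(y).
\]
Since $p(y,\cdot)\geq 0$, $a_x(y,\cdot)\geq 0$ and $V\geq 0$, both summands on the right are unambiguously defined elements of $[0,+\infty]$. Their sum equals the finite quantity $T_xPV(y)$ established in the first step, which forces each summand to be finite; this yields the finiteness of $PT_xV$ and of $A_xV$ everywhere on $E$.

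The argument is essentially bookkeeping, so I do not expect any genuine obstacle. The only point that warrants a brief check is that the identity $T_xP\phi=PT_x\phi+A_x\phi$ holds pointwise on $V$ without any subtraction of infinities. This is immediate from the definition of $a_x$: after the change of summation variable $z\mapsto z\star x^{-1}$ in $PT_xV$, the identity becomes the splitting of $T_xPV(y)=\sum_z p(y\star x,z)V(z)$ into the part with $z\in E\star x$ and the part with $z\notin E\star x$, and both sides are sums of non-negative terms. Since $V$ is already known to be finite pointwise (Lemma~\ref{lemma4-1}(3)), no ``$\infty-\infty$'' situation can occur, and the proof is complete.
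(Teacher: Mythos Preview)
Your argument is correct and matches the paper's own proof essentially line for line: use Lemma~\ref{lemma4-1} to get $PV$ finite on $E$, hence $T_xPV(y)=PV(y\star x)$ finite, then split via $T_xPV=PT_xV+A_xV$ with both summands non-negative. Your extra paragraph checking that no $\infty-\infty$ occurs is a welcome clarification but not a different approach.
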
 
\begin{proof} Indeed, by Lemma~\ref{lemma4-1} the function $PV$ is finite on $E$ and hence, for any $x\in E$,  the function $T_xPV$ is also finite. Since according to the definition of the matrix $A_x$,
\[
T_xPV = PT_xV + A_xV
\]
with non negative $A_xV$ and $PT_xV$, we conclude therefore that the functions $A_xV$ and $PT_xV$ are also finite. 
\end{proof}

The following statement is the main technical point of our proof. 
 \begin{prop}\label{pr6-1} Under the hypotheses (A0), (B1) and (B2),  the function $V$  satisfies  \eqref{eq2-5a}.
\end{prop}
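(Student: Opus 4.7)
The plan is to reduce \eqref{eq2-5a} to a ``$y=e$'' base case, an operator factorization identity, and a Riesz-decomposition argument for the ladder chain. The key algebraic ingredients are (a) the identity $P_H\varphi(y)=GA_y\varphi(e)$ for every non-negative $\varphi$, which is a direct consequence of the definition \eqref{eq2-3} of $p_H$, and (b) the commutator identity $A_{y\star u}=T_yA_u+A_yT_u$, which follows from $A_u=T_uP-PT_u$ together with $T_{y\star u}=T_yT_u$.

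First I would use $V=\1+P_HV$ (an immediate consequence of $V(y)=\sum_{n\geq 0}\P_y(H_n\in E)$) together with (a) to establish the $y=e$ case of \eqref{eq2-5a}:
\[
V(x)=1+GA_xV(e),\qquad\forall\,x\in E.
\]
Then I would use (b) to expand $GA_{y\star u}V(e)=GT_yA_uV(e)+GA_yT_uV(e)$ and combine with the previous display to deduce
\[
V(y\star u)-V(y)=GT_yA_uV(e)+P_H(T_uV-V)(y),
\]
so that $W_u(y):=T_uV(y)-V(y)$ satisfies
\[
(\Id-P_H)W_u(y)=GT_yA_uV(e).
\]
To show that $W_u\geq 0$, I would observe that (B2) combined with (b) implies that $P_H$ preserves the cone of non-negative functions that are non-decreasing under $\star u$: if $T_u\varphi\geq\varphi$, then $G(A_{y\star u}-A_y)\varphi(e)=GT_yA_u\varphi(e)+GA_y(T_u-\Id)\varphi(e)\geq 0$. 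Hence each $P_H^n\1$ is non-decreasing under $\star u$, and so is the sum $V=\sum_{n\geq 0}P_H^n\1$.

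The final and most delicate step is to identify $W_u$ with $GA_uV$. Since $W_u\geq 0$ is super harmonic for $(H_n)$, it admits a Riesz decomposition $W_u=W_u^{pot}+W_u^{harm}$ with $W_u^{pot}=G_H(GT_\cdot A_uV(e))$ and $W_u^{harm}=\lim_nP_H^nW_u$ harmonic for $P_H$. Because $A_e\equiv 0$, the kernel $p_H(e,\cdot)$ vanishes identically, so $P_H^nW_u(e)=0$ for $n\geq 1$; combined with $W_u(e)=V(u)-1=GA_uV(e)$ from the $y=e$ case and $W_u^{pot}(e)=GT_eA_uV(e)=GA_uV(e)$, this forces $W_u^{harm}(e)=0$. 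The principal obstacle is to upgrade this to $W_u^{harm}\equiv 0$ on all of $E$ and to identify $W_u^{pot}$ with $GA_uV$. Both rely on the minimality of $V$ as the smallest non-negative super-solution of $(\Id-P_H)w\geq\1$ (Lemma~\ref{lemma4-1}(3)), together with the finiteness of $A_uV$ provided by Lemma~\ref{lemma4-1p}, which together allow one to apply dominated convergence to push $P_H^nW_u\to 0$ and to preclude a non-trivial $P_H$-harmonic residual.
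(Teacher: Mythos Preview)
Your algebraic setup is correct and coincides with the paper's: the identity $P_H\varphi(y)=GA_y\varphi(e)$, the commutator $A_{y\star u}=T_yA_u+A_yT_u$, and the resulting equation
\[
(\Id-P_H)W_u(y)=GT_yA_uV(e)=:\varphi_u(y),\qquad W_u:=T_uV-V,
\]
are exactly what the paper establishes. The paper then finishes by showing that \emph{both} $W_u$ and $GA_uV$ are $P_H$-potential with defect $\varphi_u$, hence equal. Your proposal recognises this as the target but does not actually carry out either half.

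The first gap is the claim $P_H^nW_u\to 0$. Your argument is that $W_u^{harm}(e)=0$ because $p_H(e,\cdot)\equiv 0$, and that ``minimality of $V$'' plus ``dominated convergence'' then force $W_u^{harm}\equiv 0$. Neither step works. The ladder chain $(H_n)$ is not irreducible (from $e$ it goes straight to $\vartheta$), so there is no minimum principle letting you pass from $W_u^{harm}(e)=0$ to $W_u^{harm}\equiv 0$. And there is no visible dominating function for $\E_y(W_u(H_n);{\cal T}_\vartheta>n)$: the natural bound $W_u\le T_uV$ just reproduces the problem, since showing $P_H^nT_uV\to 0$ is equivalent to what you are trying to prove. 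The paper resolves this by a \emph{coupling}: it constructs a pair $(H_n^u,H_n)$ with $H_n^u=H_n\star u$ for $n<{\cal T}_\vartheta$ and both marginals having kernel $P_H$, and reads off directly that $W_u(y)=\E_{y\star u,y}\bigl(V(H^u_{{\cal T}_\vartheta});{\cal T}_\vartheta<{\cal T}_\vartheta^u\bigr)$. A strong-Markov decomposition at ${\cal T}_\vartheta$ then exhibits $W_u$ explicitly as $G_H\varphi_u$, bypassing any limiting argument.

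The second gap is the identification $G_H\varphi_u=GA_uV$. You do not address this beyond naming it. The paper proves it (Lemma~\ref{lem60-3}) by iterating $T_xP^n=P^nT_x+\sum_{k=1}^nP^{k-1}A_xP^{n-k}$ to expand $P^nA_uV(x)$ into a sum over ladder excursions, and then resumming to recognise $GA_uV=G_H\varphi_u$. This is a genuine computation, not a consequence of Lemma~\ref{lemma4-1p}; finiteness of $A_uV$ is necessary for the manipulations to make sense but does not by itself produce the identity.
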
 
The proof of this statement is given in Section~\ref{appendix}. Using this result we obtain

\begin{lemma}\label{lemma4-3} Under the hypotheses  (A0), (B1) and (B2), the function $V$ is super harmonic for the Markov chain $(X(t))$, $V (e)=1$  and 
 the function $(\Id-P)V$ is constant  in $    E$.  
 \end{lemma}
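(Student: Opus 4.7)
The plan is to apply $(\Id-P)$ to the key identity provided by Proposition~\ref{pr6-1}, namely $T_u V = V + G A_u V$, and show that everything except $T_u(\Id-P)V$ and $(\Id-P)V$ cancels. First I would dispose of the value $V(e)=1$ directly from the definition of the ladder chain. With $u=e$ in the formula for $a_u$, one has $y\star e^{-1}=y$ and $E\star e = E$, so $a_e(z,y) = p(z,y) - p(z,y) = 0$ for all $z,y\in E$. Hence $p_H(e,y)=0$ for every $y\in E$ and $p_H(e,\vartheta)=1$, which gives ${\cal T}_\vartheta = 1$ $\P_e$-almost surely and therefore $V(e)=\E_e({\cal T}_\vartheta)=1$.

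Next, I would carry out the algebra on $T_u V = V + GA_u V$. Lemma~\ref{lemma4-1p} ensures that $T_u V$, $PV$, $T_u PV$, $PT_u V$ and $A_u V$ are all finite on $E$, so the following manipulations are legitimate. On the left, use the operator identity $T_u P = PT_u + A_u$ applied to $V$ to rewrite $PT_u V = T_u PV - A_u V$, giving
\[
(\Id-P) T_u V \;=\; T_u V - T_u P V + A_u V \;=\; T_u(\Id-P)V + A_u V.
\]
On the right, since $GA_u V(x) = T_u V(x) - V(x) < \infty$ for every $x\in E$, Fubini-Tonelli applied to the nonnegative series defining $G$ yields $(\Id-P)GA_u V = A_u V$, so
\[
(\Id-P)\bigl(V + GA_u V\bigr) \;=\; (\Id-P)V + A_u V.
\]
Equating the two sides and cancelling $A_u V$ yields $T_u(\Id-P)V = (\Id-P)V$ for every $u\in E$.

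Evaluating the latter identity at $x=e$ gives $(\Id-P)V(u) = (\Id-P)V(e)$ for all $u\in E$, so $(\Id-P)V$ is a constant function on $E$; its common value is $1 - PV(e)$, which is nonnegative by the fourth assertion of Lemma~\ref{lemma4-1}. Consequently $PV \leq V$, i.e.\ $V$ is super harmonic for $(X(t))$, completing the three claims of the lemma.

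The only delicate step is the cancellation $(\Id-P)GA_u V = A_u V$: it needs the pointwise finiteness of $GA_u V$, which is already secured by Proposition~\ref{pr6-1} (it equals $T_u V - V$), together with nonnegativity to justify interchanging $P$ with the infinite sum defining $G$. Once that is in hand, everything else is an exact operator identity, and I do not expect any further obstacle.
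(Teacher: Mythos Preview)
Your proof is correct and follows essentially the same route as the paper: compute $V(e)=1$ from $A_e=0$, apply $(\Id-P)$ to the identity $T_uV = V + GA_uV$ from Proposition~\ref{pr6-1}, use the operator relation $T_uP = PT_u + A_u$ together with the finiteness guaranteed by Lemma~\ref{lemma4-1p} to obtain $T_u(\Id-P)V = (\Id-P)V$, and then invoke $PV(e)\leq 1$ from Lemma~\ref{lemma4-1} for superharmonicity. The only cosmetic difference is the order of presentation and your explicit evaluation at $x=e$ to read off constancy.
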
 
\begin{proof} Indeed, recall that by Lemma~\ref{lemma4-1} and Lemma~\ref{lemma4-1}, the functions $PV$, $T_uPV$,  $A_uV$ and $PT_uV$ are finite on $E$. 
Since by \eqref{eq2-5a},  $T_uV \geq GA_uV$, we conclude therefore that the function $GA_u V$ is also finite and moreover potential for $(X(t))$ with 
\[
(\Id-P) GA_uV = A_u V.
\] 
The function  $PGA_uV \leq GA_uV$  is therefore also finite. Hence, from \eqref{eq2-5a} it follows that 
\begin{align*}
(\Id-P)T_u V &~=~ (\Id-P) V + (\Id-P) G A _u V ~=~ (\Id-P) V + A _u V \\ &~=~ (\Id-P) V  + T _u P V -  P T _u V,
\end{align*} 
and consequently, 
\[
 T _u(\Id-P)V ~=~ (\Id-P) V, \quad \forall u\in    E. 
\]
The last relation shows that the function $(\Id-P)V$ is constant on $E$. Remark now that $A_e = 0$, and 
\[
\E_e( H_1 = y) ~=~ p_H(e,y) ~=~ GA_e\1_{\{y\}} (u) ~=~ 0 \quad \forall y\in E. 
\]
Hence, $\P_e( H_1 = \vartheta) ~=~ p_H(e,\vartheta) ~=~ 1$ and consequently, $V (e) = \E_e({\cal T}_\vartheta) = 1$. Now, to prove that the function $V$ is super  harmonic for $(X(t))$ it is sufficient to notice that by Lemma~\ref{lemma4-1}, 
\[
PV (e) ~\leq~ 1 ~=~ V (e).
\]
Lemma~\ref{lemma4-3} is therefore proved. 
\end{proof} 

\begin{lemma}\label{lemma4-4} 
Suppose that the conditions (A0), (B1) and (B2) are satisfied and let $\E_\cdot(\tau_\vartheta) = + \infty$. Then the function $V$ is harmonic for $(X(t))$ and  
 \be\label{eq40-5}
 \lim_{n\to\infty} f_n(x) ~=~ V(x), \quad \forall x\in    E.
 \ee
\end{lemma}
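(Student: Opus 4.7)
The plan is to first show that the renewal function $V$ is in fact harmonic for $(X(t))$, using the constancy of $(\Id-P)V$ from Lemma~\ref{lemma4-3} together with the divergence of $G\1 = \E_\cdot(\tau_\vartheta)$ assumed here. Then, starting from any point-wise subsequential limit $f$ of $(f_n)$, the difference $f - V$ will be a nonnegative super harmonic function vanishing at $e$, and a maximum-principle argument based on irreducibility (A0) will force $f = V$.

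For the harmonicity of $V$: Lemma~\ref{lemma4-3} gives $V(e) = 1$ and tells us that $\phi := (\Id-P)V$ is a nonnegative constant on $E$, say $\phi \equiv c \geq 0$. Iterating $V = PV + c\1$ yields
\[
V ~=~ P^n V + c\sum_{k=0}^{n-1} P^k \1, \quad n\geq 1,
\]
and since $P^k\1(x) = \P_x(\tau_\vartheta > k)$, the partial sums $\sum_{k=0}^{n-1}\P_x(\tau_\vartheta > k)$ increase to $\E_x(\tau_\vartheta) = +\infty$ as $n\to\infty$ for every $x \in E$. As $V$ is finite (Lemma~\ref{lemma4-1}) and $P^n V \geq 0$, this is compatible only with $c = 0$, so $(\Id-P)V = 0$ and $V$ is harmonic.

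For the convergence $f_n \to V$: Lemma~\ref{lemma4-1} states that $(f_n)$ is relatively compact in the point-wise topology, that every subsequential limit is super harmonic, and that $V(x) \leq \liminf_n f_n(x)$ for all $x\in E$. Consequently, any subsequential limit $f = \lim_k f_{n_k}$ is super harmonic, satisfies $V \leq f$, and fulfils $f(e) = \lim_k f_{n_k}(e) = 1 = V(e)$. Because $V$ is harmonic by the previous step, the difference $g := f - V$ is a nonnegative super harmonic function with $g(e) = 0$. The inequality $Pg(e) \leq g(e) = 0$ combined with $g \geq 0$ forces $g(y) = 0$ for every $y$ with $p(e, y) > 0$. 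Iterating this one-step argument along a positive-probability trajectory joining $e$ to any prescribed $x \in E$ (such a trajectory exists by the irreducibility assumption (A0) on $E$), we obtain $g(x) = 0$, hence $f \equiv V$. Since every subsequential limit equals $V$ and the full sequence $(f_n)$ is relatively compact, $f_n \to V$ pointwise on $E$, proving~\eqref{eq40-5}.

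The main obstacle has effectively been dispatched upstream in Proposition~\ref{pr6-1} and Lemma~\ref{lemma4-3}: once $(\Id - P)V$ is known to be constant, both steps above are almost mechanical. The one delicate point to keep in mind is that the maximum principle at $e$ must be propagated along a \emph{finite} positive-probability path inside $E$, which is precisely what irreducibility of $(X(t))$ on $E$ provides.
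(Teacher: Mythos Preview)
Your proof is correct and follows essentially the same route as the paper: the paper uses the Riesz decomposition $V = h + G\phi$ with $\phi = (\Id-P)V$ constant to force $\phi = 0$ from $G\1 = \E_\cdot(\tau_\vartheta) = +\infty$, which is exactly your iteration argument written in slightly different language, and then argues via the minimum principle that any subsequential limit $f$ of $(f_n)$ satisfies $f - V = 0$ since $f-V$ is nonnegative super harmonic and vanishes at $e$. Your inline proof of the minimum principle by propagation along positive-probability paths is the standard justification of what the paper simply cites.
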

\begin{proof} 
Indeed, recall that by Lemma~\ref{lemma4-1}, the function $V$ is super harmonic for $(X(t))$. Hence, by the Riesz decomposition theorem, 
 \[
 V ~=~ h + G \varphi 
 \]
 with  a harmonic for $(X(t))$ function $h ~=~\lim_n P^n V$ and $\varphi = (\Id-P)V$.  Recall moreover that by Lemma~\ref{lemma4-3}, the function $\varphi=(\Id-P)V$ is constant on $E$. Hence, by Fubini-Tonelli theorem,
 \[
 G\varphi(x) ~=~ \varphi (e) G\1(x) ~=~ \varphi (e)  \E_x(\tau_\vartheta),
 \]
 and consequently,  
 \[
V(x) ~=~   h +  \varphi (e)  \E_x(\tau_\vartheta) ~\geq~ \varphi (e)  \E_x(\tau_\vartheta).
 \] 
The last relations prove that  $\varphi = \varphi(e) \1 = 0$ and $V=h$  whenever $\E_\cdot(\tau_\vartheta) = + \infty$. 

To prove \eqref{eq40-5} recall that    by Lemma~\ref{lemma4-1}, if a subsequence $(f_{n_k})$ converges pointwisely on $E$, then the limit $f = \lim_{k\to\infty} f_{n_k}$ is a super harmonic function for $(X(t))$ and  satisfies the inequality  $V \leq f$. The function $V$ being harmonic, we conclude therefore that the function $f - V$ is super harmonic with $(f - V) (e) =  0$. The Markov chain $(X(t))$ being irreducible, by  the minimum principle, from this it follows that  $f - V  = 0$  and consequently, 
\[
\lim_{k\to\infty} f_{n_k} = V 
\]
for any convergent subsequence $(f_{n_k})$. The sequence $(f_n)$  being relatively compact with respect to the topology of point wise convergence, the last equality proves \eqref{eq40-5}. 
\end{proof}

\begin{lemma}\label{lemma4-5} If the conditions (A0), (B1) and (B2) are satisfied and $\E_\cdot(\tau_\vartheta) < \infty$, then the function $g(x) = \E_x(\tau_\vartheta)$ is  potential with $(\Id-P)g = \1$ and
\be\label{eq40-6}
g(u) ~\geq~ g (e) + \E_u(g( H_1), \; {\cal T}_\vartheta > 1), \quad \forall u\in    E. 
\ee
\end{lemma}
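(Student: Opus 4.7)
The plan has three steps, all relatively direct once the right commutation identity is in hand.

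First, I would identify $g$ with the Green potential of the constant function $\1$. Since $\tau_\vartheta \geq 1$ and $P^n\1(x) = \P_x(\tau_\vartheta > n)$, summing tail probabilities gives
\[
g(x) = \E_x(\tau_\vartheta) = \sum_{n=0}^\infty \P_x(\tau_\vartheta > n) = G\1(x), \quad x \in E.
\]
The hypothesis $\E_\cdot(\tau_\vartheta) < +\infty$ makes $g$ finite on $E$, and a one-line shift of the summation index yields $Pg(x) = g(x) - 1$, i.e.\ $(\Id - P)g = \1$. To conclude that $g$ is a potential function in the Riesz sense, it suffices to check $\lim_n P^n g = 0$: by the strong Markov property $P^n g(x) = \E_x(\tau_\vartheta - n;\, \tau_\vartheta > n)$, which tends to $0$ by dominated convergence since $\E_x(\tau_\vartheta) < \infty$.

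Second, to derive \eqref{eq40-6}, I would iterate the elementary commutation relation $T_u P = PT_u + A_u$ to obtain the operator identity
\[
T_u P^n = P^n T_u + \sum_{k=1}^n P^{k-1} A_u P^{n-k}, \quad n \geq 1.
\]
Applying both sides to $\1$, using $T_u\1 = \1$ (since $E\star u \subset E$ by (B1)), and evaluating at $e$ gives
\[
\P_u(\tau_\vartheta > n) = \P_e(\tau_\vartheta > n) + \sum_{k=1}^n P^{k-1} A_u P^{n-k}\1(e), \quad n \geq 1.
\]
Summing over $n\geq 1$, the two tail series yield $g(u) - 1$ and $g(e) - 1$ respectively, while Fubini--Tonelli on the non-negative double sum (reindexing by $j = n-k$) rewrites the remainder as $\sum_{k \geq 1} P^{k-1} A_u \bigl(\sum_{j\geq 0} P^j \1\bigr)(e) = GA_u g(e)$. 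This produces the identity $g(u) = g(e) + GA_u g(e)$, which is in fact strictly stronger than the claimed inequality.

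Third, I would identify $GA_u g(e)$ with the right-hand side of \eqref{eq40-6}: from the definition $p_H(u, y) = GA_u\1_{\{y\}}(e)$, a further application of linearity and Fubini--Tonelli gives
\[
\E_u\bigl(g(H_1),\, {\cal T}_\vartheta > 1\bigr) = \sum_{y \in E} p_H(u,y)\, g(y) = GA_u g(e),
\]
completing the proof. The main point requiring care is the Fubini--Tonelli interchange in the double sum; however, since all summands are non-negative and the total is controlled a priori by the finite quantity $g(u)$, the interchange is unconditional and the finiteness of $GA_u g(e)$ emerges as a by-product rather than being an obstacle to resolve beforehand.
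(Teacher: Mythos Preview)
Your argument is correct, and in fact establishes the equality $g(u)=g(e)+GA_u g(e)$ rather than only the inequality stated.  The approach is closely related to the paper's but not identical.  The paper applies the commutation relation $T_uP=PT_u+A_u$ once to obtain $(\Id-P)T_ug=\1+A_ug$, then invokes the Riesz decomposition of the super-harmonic function $T_ug$; this leaves a residual harmonic term $\tilde g(e)=\lim_n P^nT_ug(e)\geq 0$ which the paper does not compute, yielding only the inequality.  You instead iterate the commutation relation to the exact identity $T_uP^n\1=P^n\1+\sum_{k=1}^n P^{k-1}A_uP^{n-k}\1$ (the same identity the paper uses in the proof of Lemma~\ref{lemma4-1}), and then sum in $n$ via Fubini--Tonelli.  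This direct summation is more elementary, avoids the Riesz machinery, and as a by-product shows that the paper's residual term $\tilde g(e)$ actually vanishes.  For the downstream application in the proof of Theorem~\ref{th2} only the inequality is needed, so both routes suffice.
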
 
\begin{proof} Indeed, if $\E_\cdot(\tau_\vartheta) < \infty$ then by Fubini-Tonelli theorem, $G\1 = \E_\cdot(\tau_\vartheta)$ and consequently,   the function $g(x) = \E_x(\tau_\vartheta)$ is  potential with $(\Id-P)g = \1$.  
Furthermore, for any $u\in E$, using the identity $T_u P = P T_u + A_u $ one gets  
\[
PT_u g + A_ug ~=~ T_uPg ~=~ T_u(g - \1) ~=~ T_u g - \1
\]
or equivalently, since the both terms of the left hand side of the above equality are positive and the right hand side is finite,
\[
(\Id-P)T_u g ~=~ \1 +  A_u g ~\geq~ 0. 
\]
For any $u\in E$, the function $T_ug$ is therefore super harmonic for $(X(t))$ and by Riesz decomposition theorem,
\[
T_ug(x) ~=~ \tilde{g}(x) + G(\1 + A_ug)(x), \quad \forall x\in E,  
\]
with $\tilde{g}= \lim_n P^n T_ug \geq 0$. Using the last relation with  $x=e$ we obtain 
\[
g(u) ~=~T_u g (e)  ~=~ \tilde{g}(e) +  G\1 (e) +  GA_u g (e)  ~\geq~ g (e) + GA_u g (e). 
\] 
According to the definition of the Markov process $( H_n)$, the last relation proves \eqref{eq40-6}. 

\end{proof} 

Now we are ready to complete the proof of Theorem~\ref{th2}.

 \subsection{Proof of Theorem~\ref{th2}.} Suppose that the conditions (A0), (B1) and (B2) are satisfied. Then by Lemma~\ref{lemma4-1} the function $V = \E_\cdot({\cal T}_\vartheta) = G_H\1$ is finite everywhere on $    E$ and by Lemma~\ref{lemma4-3}, $V (e)=1$.  This proves that the Markov chain $(H_n)$ is transient. 
 
Furthermore,  \eqref{eq2-5a} is proved by Proposition~\ref{pr6-1}.  If moreover, $\E_\cdot(\tau_\vartheta) = \infty$, then by by Lemma~\ref{lemma4-4}, the function $V$ is harmonic for the Markov chain $(X(t))$ and \eqref{eq2-5} holds.  The first tree assertions of Theorem~\ref{th2} are therefore proved. 
  
Suppose now that $\E_\cdot(\tau_\vartheta) < \infty$. Then by Lemma~\ref{lemma4-3}, the function $V$ is super harmonic for the Markov chain $(X(t))$ and the function $(\Id-P)V$ is constant on $    E$. Moreover, by Lemma~\ref{lemma4-5}, the function $g = G1 = \E_\cdot(\tau_\vartheta)$ satisfies the relations \eqref{eq40-6} and consequently, it is super harmonic for the ladder height process $( H_n)$ with $(\Id- P_H) g \geq g (e)$.  Hence, by the Riesz decomposition theorem, for any  $x\in    E$, 
\[
g(x) ~\geq~ g (e) G_H\1(x) +  \lim_n P_H^n g(x) ~\geq~ g (e) G_H\1(x) ~=~g(e) V(x)
\]
The function $g$ being potential for $(X(t))$, from the last inequality it follows that the super harmonic function $V$ is also potential for $(X(t))$. Since  the function $(\Id-P)V$ is constant on $E$,  this proves  that $V = c G\1$ with some $c > 0$, and consequently,  the functions $V$ and $g$ are proportional to each other. Finally, to get the equality $V = g/g (e)$ it is  sufficient to notice that $V (e) = 1$.

\section{Proof of Theorem~\ref{th3}}\label{proof-th3}
Suppose that $h:E\to\R_+$ is a harmonic function of $(X(t))$. Put $h(\vartheta) = 0$. Then using the definition of the ladder height process $(H_n)$ and \eqref{e1-2}  with $u=x$ and $y= e$  one gets
\[
h(x)  ~=~ h(e) +  \E_x\bigl(h(H_1)\bigr), \quad x\in E.
\]
This relation shows that any harmonic function $h$ of $(X(t))$ is  super harmonic for $(H_n)$ with $(\Id - P_H) h = h(e)$. By Riesz decomposition theorem, from this it follows that 
\[
h ~=~h(e) G_H\1 + \tilde{h} 
\]
where the function $\tilde{h}= \lim_n P_H^n h$ is harmonic for $(H_n)$. The last relation proves \eqref{eq2-7n} because  by Fubini-Tonelli theorem, 
$
G_H\1(x) = \E_x({\cal T}_\vartheta) = V(x)$
The first assertion of Theorem~\ref{th3} is therefore proved. 

Suppose now that $\E_\cdot(\tau_\vartheta) = +\infty$. Then  by Theorem~\ref{th2}, the function $V$ is harmonic for $(X(t))$. By \eqref{eq2-7n}, for any harmonic function $h$, the function $\tilde{h} = h - h(e)V$ is therefore also harmonic for $(X(t))$ with $\tilde{h}(e)=0$ because  $V(e)=1$.  By the minimum principle,  this proves that $\tilde{h} = 0$ and consequently, the function $h$ is proportional to $V$. 

If $\E_\cdot(\tau_\vartheta) < +\infty$, then by Theorem~\ref{th2}, the function $V$ is potential for $(X(t))$ and hence, for any harmonic function $h$, the function $\tilde{h} = h - h(e)V$ is non zero.

\section{Proof of Proposition~\ref{pr6-1}}\label{appendix} Before proving this proposition in our general setting let us consider the main ideas of the proof in the case when (B2) holds with the equality. From now on throughout this section the assumption (A0) and (B1) are assumed satisfied. 

If (B2) holds with the equality, the ladder height process $(H_n)$ is given by \eqref{e1-10}, the equation \eqref{eq2-5a} is equivalent to \eqref{e1-200} for $h=V$, and the transition probabilities of $(H_n)$ satisfy the following relations  
\[
p_H(x,y) = GA_{x}\1_{\{ y \}} (e) ~=~ \begin{cases} \sum_{z\in E} G(e,z) p(z\star x, y) &\text{if $y\in E\setminus(E\star x)$,}\\
0 &\text{otherwise.} 
\end{cases} 
\]
Hence, for any $u, x, y \in E$,
\[
p_H(x\star u, y\star u) = p_H(x,y), 
\]
and consequently, letting for $u\in E$,
\[
\nu_u = \inf\{n\geq 0 : H_n \not\in E\star u\}
\]
and using the inequality ${\cal T}_\vartheta \geq \nu_u$, one gets  
\begin{align*}
\P_{x\star u} ({\cal T}_\vartheta > n) = &\P_{x\star u}(\nu_u > n) \\&\hspace{0.5cm}+ \sum_{k=1}^n \sum_{z\in E\setminus(E\star u)} \P_{x\star u}(H_k = z, \, \nu_u = k) \P_z({\cal T}_\vartheta > n-k) 
\end{align*} 
with
\[
\P_{x\star u}(\nu_u > n) = \P_x({\cal T}_\varphi > n). 
\]
From the two last relations using the identities
\[
V(x) = \E_{x}({\cal T}_\vartheta) = \sum_{n=0}^\infty \P_{x}({\cal T}_\vartheta > n) 
\]
it follows that 
\[
V(x\star u) = V(x) + \E_{x\star u}\bigl(V(H_{\nu_u}), \, \nu_u < {\cal T}_\vartheta \leq +\infty\bigr)
\]
and using moreover \eqref{e1-10} we conclude that  
\[
V(x\star u) = V(x) + \E_{x\star u}\bigl(V(X(t_{\nu_u})), \, \nu_u < {\cal T}_\vartheta \leq +\infty\bigr). 
\]
To complete the proof of \eqref{e1-200} for $V=h$ it is therefore sufficient to show that 
\[
\E_{x\star u}\bigl(V(X(t_{\nu_u})), \, \nu_u < {\cal T}_\vartheta \leq  +\infty\bigr) = \E_{x\star u}\bigl(V(X(\eta_u)), \; \eta_u < \tau_\vartheta \leq + \infty\bigr). 
\]
The last relation  follows from \eqref{e1-10} because the event  $\{\nu_u < {\cal T}_\vartheta \leq +\infty\}$  is equivalent to the event  $\{\eta_u < \tau_\vartheta \leq + \infty\}$, and on the event $\{\eta_u < \tau_\vartheta \leq + \infty\}$, the stopping times $t_1, \ldots, t_{\nu_u}$ are finite with $t_{\nu_u}= \eta_u$. 
In the case when (B2) holds with the equality, Proposition~\ref{pr6-1} is therefore proved. 

\medskip 

To prove \eqref{eq2-5a} in our general setting we construct two Markov chains $(  H_n)$ and $(  H_n^u)$ on the same probability space,  with the same transition probabilities  \eqref{eq2-3} and \eqref{eq2-4p}, and such that $  H_n \star u =   H_n^u$ for any $n\leq {\cal T}_\vartheta = \inf\{k > 0: H_k = \vartheta\}$. For this we need the following lemmas. 
\begin{lemma}\label{lem60-0} Under the hypotheses (A0), (B1) and (B2), for any $x,u\in E$, 
\[
A_{x\star u} = A_x T_u + T_xA_u
\]
\end{lemma}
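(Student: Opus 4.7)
The plan is to derive the identity from the fundamental commutator relation $A_v = T_v P - P T_v$, which was noted immediately after the definition of $A_u$ (the paper writes this as $T_uP\phi = PT_u\phi + A_u\phi$). Since every term in the claimed identity is an operator acting on non-negative functions, the most efficient approach is to prove the operator equality directly, without unpacking kernels.

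First, I would record the factorization
\[
T_{x\star u} ~=~ T_x T_u,
\]
which follows from associativity in $(\mathcal{E},\star)$: for any $\varphi$ and $y\in E$,
\[
T_x T_u\varphi(y) ~=~ (T_u\varphi)(y\star x) ~=~ \varphi((y\star x)\star u) ~=~ \varphi(y\star(x\star u)) ~=~ T_{x\star u}\varphi(y).
\]
Next, applying $A_v = T_vP - PT_v$ with $v=x\star u$ and using the factorization above,
\[
A_{x\star u} ~=~ T_xT_u P - P T_xT_u.
\]

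Then I would commute $P$ past $T_u$ and $T_x$ in turn, each time paying the cost of an $A$-operator. Using $T_uP = PT_u + A_u$,
\[
T_xT_uP ~=~ T_xPT_u + T_xA_u,
\]
and then $T_xP = PT_x + A_x$ gives
\[
T_xPT_u ~=~ PT_xT_u + A_xT_u.
\]
Combining the last two displays and subtracting $PT_xT_u$ yields the desired
\[
A_{x\star u} ~=~ A_xT_u + T_xA_u.
\]

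I do not expect a serious obstacle here: the computation is purely algebraic, everything makes sense pointwise because $A_v\varphi \geq 0$ for non-negative $\varphi$ under (A1)–(A2), and all the sums rearrange without convergence issues when tested against a single indicator $\varphi = \mathbf{1}_{\{y\}}$. The only subtlety worth mentioning explicitly is associativity (needed for $T_{x\star u}=T_xT_u$), which is available under (B1) since $(E,\star)$ sits inside a group.
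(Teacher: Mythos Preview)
Your proof is correct and essentially identical to the paper's: both rely on the commutator relation $A_v = T_vP - PT_v$ together with the factorization $T_{x\star u} = T_xT_u$, the only difference being that the paper starts from $A_xT_u + T_xA_u$ and simplifies to $A_{x\star u}$, whereas you start from $A_{x\star u}$ and expand. Your explicit justification of $T_{x\star u} = T_xT_u$ via associativity and your remark on well-definedness are a touch more careful than the paper, but the argument is the same.
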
 
\begin{proof} Indeed, according to the definition of the matrices $A_x, x\in E$, 
\begin{align*}
A_x T_u + T_xA_u &=~ T_xPT_u - PT_xT_u + T_xT_uP - T_xPT_u  ~=~ T_xT_uP - PT_xT_u  \\&=~ T_{x\star u} P -  PT_{x\star u} ~=~ A_{x\star u} 
\end{align*} 
\end{proof} 
\noindent
Using this lemma we get 
\begin{lemma}\label{lem60-1} Under the hypotheses (A0), (B1) and (B2), transition probabilities of the ladder process $(H_n)$  
satisfy the following relations 
\begin{equation}\label{eq60-1} 
p_H(x\star u, y\star u) ~=~p_H(x,y) + GT_xA_u\1_{\{ y\star u\}}  (e) ~\geq~ p_H(x,y), \quad \forall x,y, u \in E, 
\end{equation} 
\end{lemma}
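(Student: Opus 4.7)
The plan is to derive \eqref{eq60-1} by evaluating the operator identity of Lemma~\ref{lem60-0} on the indicator $\1_{\{y\star u\}}$ and using the definition $p_H(x,y) = GA_x\1_{\{y\}}(e)$. The main point is that, thanks to the group structure in (B1), right‐translation by $u$ is a bijection of $\mathcal E$, so $T_u$ permutes the indicator functions of singletons in a transparent way.

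Concretely, I would first apply Lemma~\ref{lem60-0} to the function $\1_{\{y\star u\}}$ and then apply $G$ on the left, getting
\[
G A_{x\star u}\1_{\{y\star u\}} ~=~ GA_x T_u\1_{\{y\star u\}} ~+~ GT_x A_u\1_{\{y\star u\}}.
\]
Evaluating at $e$, the left-hand side is precisely $p_H(x\star u,\, y\star u)$ by definition \eqref{eq2-3}.

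Next I would simplify the first term on the right. Since $T_u\varphi(z) = \varphi(z\star u)$, we have $T_u\1_{\{y\star u\}}(z) = \1_{\{y\star u\}}(z\star u)$, which equals $1$ iff $z\star u = y\star u$. Because $(\mathcal E,\star)$ is a group, right-multiplication by $u$ is injective, and this condition reduces to $z = y$. Thus $T_u\1_{\{y\star u\}} = \1_{\{y\}}$, and therefore $GA_xT_u\1_{\{y\star u\}}(e) = GA_x\1_{\{y\}}(e) = p_H(x,y)$. This yields the equality in \eqref{eq60-1}.

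Finally, the inequality follows immediately: under (A1)–(A2) we have $a_u(\cdot,\cdot)\geq 0$, so $A_u\1_{\{y\star u\}}\geq 0$, and both $T_x$ and $G$ preserve non-negativity. Hence $GT_xA_u\1_{\{y\star u\}}(e)\geq 0$, giving $p_H(x\star u, y\star u)\geq p_H(x,y)$. There is no real obstacle here; the only subtle point is the identification $T_u\1_{\{y\star u\}} = \1_{\{y\}}$, which would fail outside the group setting (B1), but holds here by the injectivity of right-multiplication.
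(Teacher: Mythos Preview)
Your argument is correct and follows exactly the paper's approach: apply Lemma~\ref{lem60-0} to $\1_{\{y\star u\}}$, use $T_u\1_{\{y\star u\}}=\1_{\{y\}}$ and the definition \eqref{eq2-3}, then note the extra term is nonnegative. The only difference is that you spell out the injectivity argument for $T_u\1_{\{y\star u\}}=\1_{\{y\}}$ and the nonnegativity of $GT_xA_u\1_{\{y\star u\}}(e)$, which the paper states without elaboration.
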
 
\begin{proof} By Lemma~\ref{lem60-0}, from the definition of the transition probabilities $p_H(x,y)$, $x,y\in E$, it follows that for any $x,y\in E$, 
\[
p_H(x \star u, y\star u) ~=~ GA_{x \star u   }\1_{\{ y\star u\}}  (e) ~=~ GA_x T_u\1_{\{ y\star u\}}  (e) + GT_xA_u\1_{\{ y\star u\}}  (e)
\]
Since $T_u\1_{\{ y\star u\}} = \1_{\{ y\}}$,  $GA_x\1_{\{ y\}}  (e) = p_H(x,y)$ and $GT_xA_u\1_{\{ y\star u\}}  (e)\geq 0$, the last relation proves \eqref{eq60-1}. 
\end{proof} 

Now we are ready to introduce the process $(H^u_n,   H_n)$. We define $(H^u_n,   H_n)$ as a Markov chain on 
\[
\tilde{E} ~=~ \bigl \{ (y\star u, y), \quad y \in E\bigr\} \cup \bigl\{(y,\vartheta), y\in E\bigr\} \cup \bigl\{(\vartheta,\vartheta)\bigr\}
\]
with absorbing state $(\vartheta,\vartheta)$ and  transition probabilities 
\[\P_{(y^u,y)} \bigl((H^u_1,H_1) = (z^u,z)\bigr) = \tilde{p}_H\bigl( (y^u,y), (z^u, z) \bigr), \quad (y^u,u), (z^u,z)\in \tilde{E}
\]
such that for $y, z\in E$, 
\[
\tilde{p}_H\bigl( (y\star u,y), ( z\star u, z) \bigr) ~=~ p_H(y, z),
\]
\[
\tilde{p}_H\bigl( (y\star u,y), ( z\star u, \vartheta) \bigr) ~=~ p_H(y\star u, z\star u) - p_H(y,  z) ~=~ GT_yA_u\1_{\{ z\star u\}}  (e)
\]
\[
\tilde{p}_H\bigl( (y\star u,y), ( z,\vartheta) \bigr) ~=~
p_H(y\star u, z)  \quad \text{if $ z\in E\setminus E\star u$,}
\]
\[
\tilde{p}_H\bigl( (y\star u,y), (\vartheta,\vartheta) \bigr) ~=~ 1 - \sum_{ z\in E} p_H(y\star u,  z) 
\]
and 
\[
\tilde{p}_H\bigl( (y,\vartheta), ( z, z') \bigr) =\begin{cases} p_H(y, z) &\text{if $ z'=\vartheta$ and  $ z\in E$,}\\
p_H(y,\vartheta)  &\text{if $ z= z'=\vartheta$,} \\
0 &\text{otherwise} 
\end{cases} 
\]
By Lemma~\ref{lem60-1}, under the hypotheses (A0), (B1) and (B2), the coefficients of the matrix $(\tilde{p}_H(v,v'); \; v,v'\in \tilde{E})$ are non negative and  for any $v\in \tilde{E}$, 
\[
\sum_{v'\in\tilde{E}} \tilde{p}_H(v,v') = 1.
\]
The Markov chain $(H_n^u,H_n)$ is therefore well defined.  
For $(y^u,y)\in \tilde{E}$, we denote respectively by $\P_{y^u\!,y}$ and $\E_{y^u\!,y}$ the probability measure and the corresponding expectation given that $H^u_0 = y^u$ and $  H_0=y$.

Remark that according to the above definition, $(H_n^u)$ and $(H_n)$ are two Markov chains with the same transition probabilities defined by \eqref{eq2-3} and \eqref{eq2-4p}. For any $(y^u,y)\in \tilde{E}$, and any integrable and $\sigma( (H_n))$-measurable random variable $W$, we have therefore 
\[
\E_{y^u\!,y}(W) = \E_y(W),
\]
and similarly, for any integrable and $\sigma((H^u_n))$- measurable random variable $W^u$, 
\[
E_{y^u\!, y}(W^u) = \E_{y^u}(W^u).  
\]
In particular, for ${\cal T}_\vartheta  = \inf\{k > 0: H_k = \vartheta\}$ and ${\cal T}^u_\vartheta  = \inf\{n > 0 : H^u_n = \vartheta\}$, 
\be\label{eq60-1p} 
V(x\star u) = \E_{x\star u,x} ({\cal T}^u_\vartheta ), \quad  V(x) = \E_{x\star u, x}({\cal T}_\vartheta) \quad \text{and} \quad V(z) = \E_{z,\vartheta}({\cal T}_\vartheta^u) 
\ee
Remark moreover, that for any $x\in E$,  $\P_{x\star u,x}$-almost surely, 
\[
H^u_n = H_n\star u \in E\star u \subset E, \quad \forall n < {\cal T}_\vartheta. 
\]
The stopping times ${\cal T}_\vartheta$ and 
${\cal T}^u_\vartheta$ satisfy therefore the inequality 
\[
{\cal T}^u_\vartheta \geq {\cal T}_\vartheta,
\]
and using the same arguments as in the previous particular case, we obtain
\begin{align*}
\P_{x\star u,x}({\cal T}^u_\vartheta \!> \!n) &= \P_{x\star u, x} ({\cal T}_\vartheta \!> \!n) \\ &\hspace{1cm}+\sum_{k=1}^n \sum_{z\in E} \P_{x\star u, x}( H_k^u \!= z, {\cal T}_\vartheta = k) \P_{z,\vartheta}({\cal T}^u_\vartheta > n-k). 
\end{align*}
From the last relation using \eqref{eq60-1p} one gets 
\be\label{eq60-2}
V(x\star u) = V(x) + \E_{x\star u,x}\left(V(H^u_{{\cal T}_\vartheta}), \; {\cal T}_\vartheta < {\cal T}_\vartheta^u \leq +\infty\right).
\ee
Now, to complete the proof of \eqref{eq2-5a} it is sufficient to show that 
\be\label{eq60-3} 
\E_{x\star u,x}\left(V(H^u_{{\cal T}_\vartheta}), \; {\cal T}_\vartheta < {\cal T}_\vartheta^u \leq +\infty\right) = GA_uV(x), \quad \forall x, u\in E. 
\ee
To prove this equality we use the following two lemmas. 

\medskip 

\begin{lemma}\label{lem60-2} Under the hypotheses (A0), (B1) and (B2), for any $u\in E $, the function 
\[
F_u(x) = E_{x\star u,x}\left(V(H^u_{{\cal T}_\vartheta}), \; {\cal T}_\vartheta < {\cal T}_\vartheta^u \leq +\infty\right) 
\]
 is potential for the ladder process $(H_n)$ with 
\[
(\Id-P_H) F_u(x) ~=~ GT_xA_uV(e) , \quad x\in E. 
\]
\end{lemma}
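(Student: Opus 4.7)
The plan is to use the coupled chain $(H_n^u, H_n)$ introduced above to give a direct series representation of $F_u$ as a potential for $(H_n)$. First I note that the identity \eqref{eq60-2} (established in the display just above the lemma) already gives $F_u(x) = V(x\star u) - V(x)$, so $F_u$ is finite on $E$ because, by Theorem~\ref{th2}(ii), $V$ is. The strategy now is to decompose $F_u(x)$ according to the value $n+1={\cal T}_\vartheta$ and to match the resulting series with $G_H R_u(x)$ for the function $R_u$ claimed to equal $GT_\cdot A_uV(e)$.

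Next I would do the one-step / strong-Markov analysis on the coupled chain. On the event $\{{\cal T}_\vartheta>n\}$, by construction of $(\tilde p_H)$ one has $H_k^u = H_k\star u$ for $k=0,\ldots,n$, and the transition at time $n+1$ is governed by $\tilde p_H((H_n\star u, H_n),\cdot)$. The event $\{{\cal T}_\vartheta = n+1,\ H_{n+1}^u \in E\}$ is exactly the union of transitions $(H_n\star u, H_n)\to (z\star u,\vartheta)$ (for $z\in E$) and $(H_n\star u, H_n)\to (w,\vartheta)$ for $w\in E\setminus E\star u$, with respective probabilities $GT_{H_n}A_u\1_{\{z\star u\}}(e)$ and $p_H(H_n\star u,w)$. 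Summing these contributions weighted by $V(H_{n+1}^u)$ and applying the strong Markov property to the $H$-chain gives
\[
F_u(x) = \sum_{n=0}^\infty \E_x\bigl[R_u(H_n);\ {\cal T}_\vartheta>n\bigr] = G_HR_u(x),
\]
where
\[
R_u(y) := \sum_{z\in E} V(z\star u)\,GT_yA_u\1_{\{z\star u\}}(e) + \sum_{w\in E\setminus E\star u} V(w)\,p_H(y\star u,w).
\]
Since $F_u$ is finite and the summands are non-negative, this representation is legitimate; in particular $(\Id-P_H)F_u = R_u$ and $F_u$ is a potential for $(H_n)$.

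It remains to identify $R_u(y)$ with $GT_yA_uV(e)$, which is the main technical step. Unfolding the definitions,
\[
GT_yA_uV(e) = \sum_{z\in E} G(e,z)\sum_{w\in E} a_u(z\star y,w)\,V(w),
\]
and I split the inner sum according to whether $w=z'\star u\in E\star u$ or $w\in E\setminus E\star u$, using
$a_u(z\star y, z'\star u) = p(z\star y\star u, z'\star u) - p(z\star y, z')$ in the first case and $a_u(z\star y,w) = p(z\star y\star u,w)$ in the second. For the second case, because $(E,\star)$ is a semigroup (B1), one has $E\star y\star u\subset E\star u$, so $w\notin E\star y\star u$; hence $a_{x\star u}(z,w) = p(z\star x\star u,w)$ and the inner sum matches the second term of $R_u(y)$ via $p_H(y\star u,w) = \sum_z G(e,z)p(z\star y\star u,w)$. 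For the first case, Lemma~\ref{lem60-0} gives $GT_yA_u\1_{\{z'\star u\}}(e) = \sum_z G(e,z)[p(z\star y\star u, z'\star u) - p(z\star y, z')]$, matching the first term of $R_u(y)$. Fubini is justified by the finiteness of $F_u=G_HR_u$, which forces the double sum to be absolutely convergent. The main obstacle is precisely this bookkeeping with the $w\in E\star u$ vs.\ $w\notin E\star u$ split and the verification that all the non-negative terms reassemble correctly; once it is done, we conclude $(\Id-P_H)F_u(x) = R_u(x) = GT_xA_uV(e)$, and $F_u = G_HR_u$ is the announced potential representation.
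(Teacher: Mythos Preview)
Your argument is correct and follows essentially the same line as the paper: decompose $F_u$ according to the value of ${\cal T}_\vartheta$, use the Markov property of the coupled chain to write $F_u=G_H\varphi_u$ for the one-step quantity $\varphi_u(y)=\E_{(y\star u,y)}(V(H^u_1),\,{\cal T}_\vartheta=1<{\cal T}^u_\vartheta)$, and then identify $\varphi_u(y)=GT_yA_uV(e)$. The only real difference is in this last identification. The paper first rewrites $\varphi_u(y)=P_HV(y\star u)-P_HT_uV(y)=GA_{y\star u}V(e)-GA_yT_uV(e)$ and then invokes Lemma~\ref{lem60-0} ($A_{y\star u}=A_yT_u+T_yA_u$) to conclude. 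You instead expand both sides directly via the definition of $a_u$ and match the pieces; your reference to Lemma~\ref{lem60-0} for the formula $GT_yA_u\1_{\{z'\star u\}}(e)=\sum_zG(e,z)[p(z\star y\star u,z'\star u)-p(z\star y,z')]$ is unnecessary, as this is just the definition of $a_u$. Both routes are short and equivalent.

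One small point of hygiene: you invoke Theorem~\ref{th2}(ii) for the finiteness of $V$, but this lemma is used in the proof of Proposition~\ref{pr6-1}, which in turn is used to prove Theorem~\ref{th2}. There is no actual circularity because the finiteness of $V$ is established independently in Lemma~\ref{lemma4-1}; just cite that instead. Also, your Fubini justification is superfluous: all terms are non-negative, so Tonelli applies regardless of finiteness.
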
 
\begin{proof} Denote 
\[
\varphi_u(x) = \E_{x \star u   ,x}(V(  H ^u _1), \;   {\cal T}_\vartheta = 1 < {\cal T}^u_\vartheta ), \quad x\in E. 
\]
Then by Markov property, for any $k\geq 1$, $x\in E$, 
\begin{align*}
&\E_{x \star u   ,x}\left(V(  H ^u _k), \; {\cal T}_\vartheta = k < {\cal T}^u_\vartheta \right) \nonumber \\&= \sum_{y\in E } \P_{x \star u   ,x}\Bigl(  (H^u_{k-1}, H_{k-1}) = (y\star u, y)\Bigr)\E_{y \star u   ,y}\!\left(V(  H ^u _1), \;  {\cal T}_\vartheta = 1 < {\cal T}^u_\vartheta \right)  \\
&= \sum_{y\in E } \P_{x}\!\left(  H_{k-1} = y\right) \varphi_u(y)  ~=~ P_H^{k-1} \varphi_u (x)  
\end{align*} 
and consequently, for any $x\in E$, 
\[
F_u(x) = \E_{x \star u   ,x}\left(V(  H ^u _{{\cal T}_\vartheta} ), \,{\cal T}_\vartheta < {\cal T}^u_\vartheta \leq +\infty \right)  = \sum_{k=1}^\infty P_H^{k-1} \varphi_u(x) = G_H \varphi_u(x).  
\]
The function $F_u$ is therefore potential for $(H_n)$ with $(\Id - P_H)F_u = \varphi_u$. To complete the proof of our lemma it is now sufficient to show that 
\be\label{eq60-4} 
\varphi_u(x) ~=~ GT_xA_uV(e), \quad \forall x\in E. 
\ee
To prove these relations we notice that, by definition of the transition probabilities $\tilde{p}((x\star u,x), (y,\vartheta))$ and $p_H(x\star u, y)$, for any $x\in E$, 
\begin{align*}
\varphi_u(x)  &=~ \E_{x \star u   ,x}(V(  H ^u _1), \;   {\cal T}_\vartheta = 1 < {\cal T}^u_\vartheta ) ~=~ \sum_{y\in E} \tilde{p}((x\star u,x), (y,\vartheta)) V(y) \\
&= \sum_{y\in E} (p_H(x\star u, y\star u) - p_H(x,  y)) V(y\star u)  + \!\!\!\sum_{y\in E\setminus(E\star u)} \!\!p_H(x\star u, y) V(y) \\
&= \sum_{y\in E} p_H(x\star u, y) V(y)  -  \sum_{y\in E}  p_H(x,  y)V(y\star u) \\
&= P_HV(x\star u) - P_HT_uV(x) ~=~  GA_{x\star u}V(e) - GA_xT_uV(e) 
\end{align*}
The last relation combined with Lemma~\ref{lem60-0} proves \eqref{eq60-4}. 
\end{proof}

\begin{lemma}\label{lem60-3} Under the hypotheses (A0), (B1) and (B2), for any $u\in E $, the function $GA _u V$ is potential for the ladder height process $(H_n)$ and for any $x\in E$, 
\be\label{eq60-5} 
(\Id-P_H)GA _u V(x) = GT_xA_uV(e)
\ee 
\end{lemma}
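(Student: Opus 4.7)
The plan is to prove \eqref{eq60-5} and the potential property of $GA_uV$ simultaneously: first by establishing $GA_uV \leq T_uV$ (so $GA_uV$ is finite), then by deriving the key operator identity $T_x(GA_uV) = GT_xA_uV + GA_xGA_uV$ via a non-negative series rearrangement, and finally by ruling out a non-trivial harmonic component in the Riesz decomposition through comparison with $V$. For finiteness, using $A_uV = T_uPV - PT_uV$ and $T_uPV = T_uV - T_u(V-PV)$ one telescopes
\[
\sum_{t=0}^N P^tA_uV \;=\; T_uV \;-\; P^{N+1}T_uV \;-\; \sum_{t=0}^N P^tT_u(V-PV);
\]
all three terms on the right are non-negative, each dominated by the finite function $T_uV$, so monotone convergence yields $GA_uV \leq T_uV < \infty$.

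For the key identity, I iterate $T_xP = PT_x + A_x$ to obtain $T_xP^t = P^tT_x + \sum_{k=0}^{t-1} P^kA_xP^{t-1-k}$, apply it to $A_uV$, and sum over $t$. Since every kernel is non-negative, Fubini-Tonelli justifies all exchanges of summation, giving
\[
T_x(GA_uV) \;=\; \sum_{t \geq 0} T_xP^tA_uV \;=\; GT_xA_uV + GA_xGA_uV.
\]
Evaluating at $e$, the identity $T_x\phi(e) = \phi(x)$ gives $T_x(GA_uV)(e) = GA_uV(x)$, while $GA_xGA_uV(e) = P_HGA_uV(x)$ by the definition of $P_H$ in \eqref{eq2-3} and $GT_xA_uV(e) = \varphi_u(x)$ is the kernel $\varphi_u$ appearing in Lemma~\ref{lem60-2}. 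Rearranging yields $(\Id - P_H)GA_uV(x) = GT_xA_uV(e)$, which is \eqref{eq60-5}.

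Super-harmonicity of $GA_uV$ for $(H_n)$ follows, and Riesz decomposition gives $GA_uV = G_H\varphi_u + h$ with $h \geq 0$ harmonic for $(H_n)$. Lemma~\ref{lem60-2} identifies $G_H\varphi_u = F_u$, and \eqref{eq60-2} gives $F_u = T_uV - V$; combining with the finiteness step yields $h = GA_uV - T_uV + V \leq V$. To force $h \equiv 0$, I use that $V(x) = 1 + P_HV(x)$ by conditioning on $H_1$ for the ladder chain, so iteration gives $P_H^nV(x) = V(x) - \sum_{k<n} \P_x(\mathcal{T}_\vartheta > k) \to 0$ by finiteness of $V(x) = \E_x(\mathcal{T}_\vartheta)$. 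Since $h$ is $P_H$-harmonic with $h(\vartheta) = 0$,
\[
h(x) \;=\; \E_x\bigl(h(H_n); \, \mathcal{T}_\vartheta > n\bigr) \;\leq\; \E_x\bigl(V(H_n); \, \mathcal{T}_\vartheta > n\bigr) \;=\; P_H^nV(x) \;\to\; 0,
\]
so $h = 0$ and $GA_uV = G_H\varphi_u$ is potential for $(H_n)$ with the claimed kernel.

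The main obstacle is the key identity. A naive route would apply $G$ to the pointwise decomposition $A_xGA_uV = T_xGA_uV - T_xA_uV - PT_xGA_uV$ and invoke $GP = G - \Id$, but this requires cancelling $GT_xGA_uV(e)$, a quantity that can be infinite since $T_xGA_uV$ need not be $P$-potential. Working instead with the non-negative double series $\sum_{t \geq 0} T_xP^tA_uV$ keeps every summand $\geq 0$ and sidesteps the spurious cancellation.
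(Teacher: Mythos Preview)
Your derivation of the identity $(\Id-P_H)GA_uV(x)=GT_xA_uV(e)$ is correct: summing the iterates $T_xP^t = P^tT_x + \sum_{k=0}^{t-1}P^kA_xP^{t-1-k}$ against the non-negative function $A_uV$ and evaluating at $e$ is exactly the right manipulation, and Fubini--Tonelli applies. Your subsequent plan --- Riesz--decompose $GA_uV$ for $(H_n)$, identify the potential part as $G_H\varphi_u=F_u=T_uV-V$ via Lemma~\ref{lem60-2} and \eqref{eq60-2}, and kill the harmonic remainder $h$ through $h\le V$ and $P_H^{\,n}V\to 0$ --- is a legitimate alternative to the paper's argument.

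There is, however, a circularity in your finiteness step. To claim that ``all three terms on the right are non-negative'' in your telescoping identity you need $V-PV\ge 0$, i.e.\ that $V$ is super-harmonic for $(X(t))$. In the paper's logical order this is Lemma~\ref{lemma4-3}, which in turn relies on Proposition~\ref{pr6-1}, whose proof uses the present lemma. So the bound $GA_uV\le T_uV$ is not yet available, and without it you can neither apply the Riesz decomposition (which requires finiteness) nor obtain the inequality $h\le V$ on which your vanishing argument rests.

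The paper avoids this by never separating out a finiteness step. It iterates the relation $P^nA_uV(x)=\varphi_{u,n}(x)+\sum_{k=1}^n B_k P^{n-k}A_uV(x)$ all the way down to a finite multi-index sum $\sum_{m}\sum_{k_1+\cdots+k_m\le n}B_{k_1}\cdots B_{k_m}\varphi_{u,\,n-\sum k_i}(x)$ of non-negative terms, then sums over $n$ and regroups (Fubini--Tonelli) to obtain $GA_uV=G_H\varphi_u$ directly as an identity in $[0,\infty]$; potentiality is then immediate, with no harmonic remainder to rule out. Your operator identity is precisely the first step of this expansion, evaluated at $e$; carrying the iteration through rather than stopping at one application of $P_H$ would repair the gap without the telescoping detour. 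What your route buys, once finiteness is secured some other way, is a cleaner conceptual picture (super-harmonicity plus $P_H^{\,n}V\to 0$) in place of the paper's combinatorial bookkeeping.
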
 
\begin{proof} 
Recall  that  according to the definition of the operator $A_x$, 
\[
T_xP = PT_x + A_x. 
\]
From this identity it follows that for any $n\geq 1$, 
\[
T_xP^n = PT_x P^{n-1} + A_xP^{n-1}  =  P^nT_x +  \sum_{k=1}^{n}P^kA_xP^{n- k}.
\]
and consequently, 
\be\label{eq60-6}
P^nA_u V (x) ~=~ T_xP^nA_u V(e)  ~=~ P^nT_xA_uV(e)  + \sum_{k=1}^{n} P^{k-1}A_xP^{n- k}A_u V (e)
\ee
Consider a sequence of operators $B_n$ defined on the set of non negative functions $\{\varphi : E \to \R_+\}$ by 
\[
B_n\varphi (x)  ~=~ P^{n-1}A_x\varphi(e) ~=~\sum_{y\in E} b_n(x,y)\varphi(y), \quad x\in E, 
\]
with 
\[
b_n(x,y) = P^{n-1}A_x\1_y(e) \geq 0, \quad \forall x,y\in E, 
\]
and let 
\[
  \varphi_{u,n}(x) = P^nT_xA_uV(e), \quad x\in E. 
\]
With these notations,  letting 
\[
\varphi_u(x) = GT_xA_uV(e)
\]
we obtain 
\be\label{eq60-7} 
\varphi_u(x) ~=~ \sum_{n=0}^\infty \varphi_{u,n}(x), \quad \forall x\in E.
\ee
Moreover, by definition of the transition probabilities of the process$(H_n)$, 
\be\label{eq60-8}
P_H\varphi(x) ~=~ \E_x(\varphi(H_1)) ~=~ GA_x\varphi(e) ~=~ \sum_{n=1}^\infty B_n\varphi(x) , 
\ee
and using \eqref{eq60-6}, for any $n\geq 1$, we get 
\begin{align*}
P^nA_u V (x) &=~ \varphi_{u,n}(x) + \sum_{k=1}^{n} B_kP^{n- k}A_u V (x) \\
&= \varphi_{u,n}(x)  + \sum_{k=1}^{n} \sum_{y\in E} b_k(x,y) P^{n- k}A_u V (y).
\end{align*}
Since for $k=n$, $P^{n-k}A_u V(x) = A_uV(x) = \varphi_{u,0}(x)$, the iterates of the last relations provide the following relations   
\begin{align*}
P^nA_u V(x)  &= \varphi_{u,n}(x)  + \!\sum_{m=1}^n \,\sum_{y_1,\ldots,y_m\in E}  \sum_{\substack{k_1,\ldots k_m\geq 1:\\k_1+\cdots + k_m \leq n} }\!\!b_{k_1}(x,y_1) \cdots b_{k_m}(y_{m-1},y_m) \\ 
&\hspace{7.1cm}\times \varphi_{u,n-(k_1+\cdots +k_m)} (y_m) \\
&=\varphi_{u,n}(x)  + \!\sum_{m=1}^n  \sum_{\substack{k_1,\ldots k_m\geq 1:\\k_1+\cdots + k_m \leq n} }\hspace{-0.5cm}B_{k_1}\ldots B_{k_m}\varphi_{u,n-(k_1+\cdots +k_m)}(x)
\end{align*} 
When combined with \eqref{eq60-7} and \eqref{eq60-8} the above relations prove that for any $x\in E$, 
\begin{align*}
GA_uV(x) = \sum_{n=0}^\infty P^nA_uV(x) ~=~ \varphi_u(x) + \sum_{m=1}^n P_H^m \varphi_u(x) ~=~ G_H\varphi_u(x) 
\end{align*} 
and consequently,  the function $GA_uV$ is potential for the ladder height process $(H_n)$ with $(\Id-P_H)GA_uV(x) = \varphi_u(x) = GT_xA_uV(e)$. 
\end{proof} 
\medskip

Now we are ready to complete the proof of \eqref{eq2-5a}. The functions $F_u = T_uV - V$ and $GA_uV$ being potential for the ladder process $(H_n)$ with the same function $\varphi_u = (\Id-P_H)(T_u V - V) = (\Id-P_H) GA_uV$ we conclude that $T_uV - V = G_H \varphi_u = GA_uV$. The last equality is equivalent to \eqref{eq2-5a}. Proposition~\ref{pr6-1} is therefore proved.

\section{Proof of Proposition~\ref{prC-1}.}\label{proof_of_prC-1} We begin the proof of Proposition~\ref{prC-1} with the following preliminary results.
\subsection{Preliminary results} In this subsection, the conditions (C0') and (C1') are assumed satisfied, but instead of the assumption (C2) we will assume that 

{\em 
\begin{enumerate}
\item[(C2')] the  step generating function 
\[
R(\alpha) ~=~ \sum_{x\in\Z^d} e^{\alpha\cdot x} \mu(x) 
\]
is finite in a neighborhood of the set $D = \{\alpha\in\R^d :~R(\alpha)\leq 1\}$ on $\R^d$ and 
\[ 
\sum_{x\in \Z^d} x \mu(x) ~\not=~0. 
\]
\end{enumerate} 
}
Under the hypotheses (C0'), (C1') and (C2), the results of this subsection will be applied next to truncated  versions of the original random walk $(X(t))$. By construction, the truncated  versions of $(X(t))$  will satisfy (C2') instead of (C2).  As before, we denote by $G(x,y), \, x,y\in E$, the Green function of $(X(t))$
 \[
 G(x,y)  ~=~ \sum_{n=0}^\infty \P_x(X(n) = y).
 \] 
The main result of this subsection is the following statement. 
\begin{prop}\label{pr7-100} Under the hypotheses (C0') and (C1') and (C2'),  for any $u,v\in\overline{\cal C}\times\overline{\cal C}$ and any sequences $(u_n),(v_n)\in E^\N$, with $\lim_n u_n/n = u$ and $\lim_n v_n/n = v$, 
\be\label{eq70-1}
\lim_{n\to\infty}\frac{1}{n} \log G(u_n, v_n) ~=~ - \sup_{\alpha\in D} \langle \alpha, v - u\rangle,
\ee
\end{prop}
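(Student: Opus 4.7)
The plan is to establish \eqref{eq70-1} by matching upper and lower bounds obtained via the classical Cram\'er exponential change of measure, together with the communication condition (C0') to handle killing on the cone boundary.

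\textbf{Upper bound via tilting.} For any $\alpha$ in the interior of $D$, the tilted kernel $\tilde p_\alpha(x,y)=p(x,y)e^{\langle\alpha,y-x\rangle}/R(\alpha)$ is a stochastic kernel on $\Z^d$; denote by $\tilde{\P}^\alpha$ the law of the tilted walk. A direct computation gives
\[
\P_{u_n}(X(t)=v_n) ~=~ e^{-\langle\alpha,v_n-u_n\rangle}R(\alpha)^t\,\tilde{\P}^\alpha_{u_n}\!\left(X(t)=v_n,\,\tau_\vartheta>t\right).
\]
Summing over $t\ge 0$, using $R(\alpha)<1$ on $\mathrm{int}(D)$ together with the trivial bound $\tilde{\P}^\alpha(\cdot)\le 1$,
\[
G(u_n,v_n) ~\le~ \frac{e^{-\langle\alpha,v_n-u_n\rangle}}{1-R(\alpha)}.
\]
Taking $\tfrac{1}{n}\log$, letting $n\to\infty$, optimizing $\alpha$ over $\mathrm{int}(D)$ -- which is nonempty by (C2'), since a nonzero drift prevents $0$ from being a minimizer of $R$ -- and extending by continuity to $\partial D$, yields
\[
\limsup_n \tfrac{1}{n}\log G(u_n,v_n) ~\le~ -\sup_{\alpha\in D}\langle\alpha,v-u\rangle.
\]

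\textbf{Lower bound via path construction.} Let $\alpha^\star$ be a maximizer of $\alpha\mapsto\langle\alpha,v-u\rangle$ on $D$; by convexity it lies on $\partial D$, so $R(\alpha^\star)=1$ and the $\alpha^\star$-tilt is fully stochastic, with drift $\nabla R(\alpha^\star)$ pointing along $v-u$ by the characterization of the maximizer. One picks a tilt $\beta$ close to $\alpha^\star$ and a time horizon $t_n\sim cn$ such that the tilted walk $\tilde S^\beta$ satisfies $t_n\nabla R(\beta)/R(\beta)\approx v_n-u_n$; the duality $\sup_{\alpha\in D}\langle\alpha,v-u\rangle=\inf_{s>0}s\,\Lambda^*((v-u)/s)$, where $\Lambda^*$ is the Legendre transform of $\log R$, guarantees that the corresponding exponential rate matches the supremum in \eqref{eq70-1}. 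Retaining only the single term $t=t_n$ in the sum defining $G$ and inverting the tilt reduces the lower bound to (i) a local central limit theorem for $\tilde S^\beta$, furnishing the polynomial factor $n^{-d/2}$, and (ii) a subexponential lower bound on the conditional cone-survival probability $\tilde{\P}^\beta_{u_n}(\tau_\vartheta>t_n\mid \tilde S^\beta(t_n)=v_n)$.

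\textbf{Main obstacle: survival in the cone.} The delicate step is keeping the cone-survival probability above $e^{-\varepsilon n}$ for every $\varepsilon>0$, particularly when $u$ or $v$ lies on $\partial{\cal C}$. The communication hypothesis (C0') is used to deform $u_n,v_n$ by a bounded amount to points $u_n',v_n'$ at distance $\Omega(\sqrt{n})$ inside ${\cal C}$, at only a constant multiplicative cost in the Green function. The straight-line segment from $u_n'$ to $v_n'$ stays at distance $\Omega(\sqrt{n})$ from $\partial{\cal C}$ by convexity of $\overline{\cal C}$, while the Gaussian fluctuations of the tilted walk around this segment are of the same order $O(\sqrt{n})$; a standard tube estimate then yields a uniformly positive probability of never exiting ${\cal C}$ along the trajectory. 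Combined with the local CLT and the upper bound above, this produces \eqref{eq70-1}. Careful bookkeeping when the direction $v-u$ is tangential to $\partial{\cal C}$ --- so that the straight-line path is parallel to a face --- is where the bulk of the technical work will go, and where (C0') together with the convexity of the cone has to be exploited most carefully.
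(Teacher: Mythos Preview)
Your upper bound agrees with the paper's (Lemma~\ref{lem70-2}): both use the exponential tilt, though the paper bounds $G_\alpha(x,y)\le G_\alpha(y,y)=G(y,y)\le G_S(0,0)$ directly for every $\alpha\in D$ rather than summing a geometric series over $\mathrm{int}(D)$.

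The lower bound is where you diverge from the paper, and your plan contains an internal inconsistency. You write that (C0') lets you deform $u_n,v_n$ ``by a bounded amount'' to points at distance $\Omega(\sqrt n)$ from $\partial{\cal C}$ ``at only a constant multiplicative cost.'' If $u\in\partial{\cal C}$, the sequence $u_n$ may sit on $\partial{\cal C}$ itself, so moving it to distance $\Omega(\sqrt n)$ from the boundary requires $\Omega(\sqrt n)$ steps and costs $\delta^{\kappa\sqrt n}$ --- subexponential, hence harmless for the rate, but neither bounded nor constant. Once this is corrected your $\sqrt n$--tube strategy \emph{could} be pushed through, but it is more delicate than necessary: you need a local CLT for the tilted walk and a quantitative tube probability at the critical $\sqrt n$ scale, and you yourself flag the tangential case $v-u$ parallel to a face of $\partial{\cal C}$ as unresolved.

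The paper sidesteps all of this with a two-step argument. First it proves the lower bound only for $u\ne v$ in the \emph{open} cone ${\cal C}$ (Lemmas~\ref{lem70-4}--\ref{lem70-5}): there the straight-line path $t\mapsto u+t\,w_{v-u}$ lies at distance $\Omega(n)$ from $\partial{\cal C}$, so a Mogul'skii-type large-deviation lower bound for the \emph{free} walk applies verbatim to the killed walk, and one connects to $v_n$ at the end using (C0') across a ball of radius $\varepsilon n$, then lets $\varepsilon\to 0$. No local CLT and no boundary analysis is needed. Second (Lemma~\ref{lem70-6}) the paper shows that
\[
J(u,v)~:=~\liminf_n\frac{1}{n}\log G(u_n,v_n)
\]
is Lipschitz on $\overline{\cal C}\times\overline{\cal C}$ with constant $\kappa|\log\delta|$, a one-line consequence of (C0'). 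Since $w\mapsto\sup_{\alpha\in D}\langle\alpha,w\rangle$ is continuous, the lower bound extends from ${\cal C}\times{\cal C}$ to the closure automatically. This continuity extension replaces your entire $\sqrt n$--push-in and tube machinery, including the tangential case you left open, with two short lemmas.
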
 
Before proving this result let us consider its following  straightforward consequence.   
\begin{cor}\label{corCC-1}  Under the hypotheses (C0') and (C1') and (C2'), for any $u\in E$ and uniformly on $x\in E$, 
\be\label{eq70-2}
\liminf_{n\to\infty} \frac{1}{n}\log Q(x, x+nu) ~\geq~ - \sup_{\alpha\in D} \langle \alpha, u\rangle 
\ee
and
\be\label{eq70-3} 
\liminf_{n\to\infty} \frac{1}{n}\log Q(x+nu, x) ~\geq~ - \sup_{\alpha\in D} \langle \alpha, - u\rangle 
\ee
\end{cor}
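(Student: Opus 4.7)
The plan is to combine Proposition~\ref{pr7-100} (which gives the precise exponential rate of the Green function) with the first-entrance decomposition and a simple translation-monotonicity argument drawn from (C1'), in order to upgrade the convergence along fixed sequences to lower bounds uniform in $x$.

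First I would use the standard identity $G(y,z) = Q(y,z) G(z,z)$ for $y \neq z$, obtained by strong Markov at the first visit to $z$, together with $G(z,z) = (1 - Q(z,z))^{-1} \in [1,\infty)$ by transience. Applying Proposition~\ref{pr7-100} to the sequences $(u_n,v_n) = (0, nu)$ and $(u_n,v_n) = (nu, nu)$ gives
\[
\lim_{n\to\infty} \tfrac{1}{n} \log G(0, nu) = -\sup_{\alpha \in D}\langle \alpha, u\rangle, \qquad \lim_{n\to\infty} \tfrac{1}{n}\log G(nu, nu) = 0,
\]
so that $\lim_n \tfrac{1}{n}\log Q(0, nu) = -\sup_{\alpha\in D}\langle \alpha, u\rangle$. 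Symmetric choices $(u_n,v_n) = (nu, 0)$ and $(u_n,v_n)=(0,0)$ produce $\lim_n \tfrac{1}{n}\log Q(nu, 0) = -\sup_{\alpha\in D}\langle \alpha, -u\rangle$.

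The second step transports these limits uniformly in $x$ via translation invariance of $\mu$. If $(S_t)$ denotes the centered random walk started at $0$, then
\[
Q(x, x+nu) = \P_0\bigl(\exists\, \tau > 0 : S_\tau = nu,\ S_s \in E - x \text{ for all } 0 \leq s \leq \tau \bigr).
\]
Condition (C1') implies that $z \in E$ and $x \in E$ force $z+x \in E$, i.e.\ $E \subset E - x$. Hence the event on the right contains the corresponding event defining $Q(0, nu)$, and
\[
Q(x, x+nu) \geq Q(0, nu), \qquad \forall x \in E.
\]
The analogous inclusion $E - nu \subset E - x - nu$ (also from (C1')) gives $Q(x+nu, x) \geq Q(nu, 0)$. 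Taking $\log/n$ and $\liminf$ and inserting the limits from the first step yields the uniform lower bounds~\eqref{eq70-2} and~\eqref{eq70-3}.

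I do not anticipate a substantive obstacle: the analytic work is done by Proposition~\ref{pr7-100}, and the monotonicity step is a one-line consequence of the semigroup property (C1'). The only book-keeping point is the degenerate case $u=0$, where both sides vanish trivially.
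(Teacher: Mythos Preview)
Your proposal is correct and follows essentially the same route as the paper: both use the translation monotonicity $Q(x,x+nu)\geq Q(0,nu)$ coming from $E\subset E-x$ under (C1'), together with the first-entrance identity $Q(0,nu)=G(0,nu)/G(nu,nu)$ and Proposition~\ref{pr7-100}. The only cosmetic difference is that the paper controls the denominator by the crude bound $G(nu,nu)\leq G_S(0,0)$ (Green function of the unrestricted walk, finite by transience), whereas you apply Proposition~\ref{pr7-100} on the diagonal to get $\tfrac{1}{n}\log G(nu,nu)\to 0$; either works.
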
 
\begin{proof} Indeed, consider a homogeneous random walk $(S(t))$ on $\Z^d$ with transition probabilities $
\P_x(S(1) = y) = \mu(y-x), \quad x,y\in \Z^d$. 
Our random walk $(X(t))$ is a copy of the random walk $(S(t))$ killed upon the time $\tau_\vartheta = \inf\{ t\geq 1:~ S(t) \not\in E\}$ :
\[
X(t) ~=~\begin{cases} S(t), &\text{if $t <\tau_\vartheta$},\\ 
\vartheta &\text{otherwise.} 
\end{cases} 
\]
Because of the assumption (C2'), the random walk $(S(t))$ is transient on $\Z^d$ and Its Green function 
\[
G_S(x,y) ~=~ \sum_{n=0}^\infty\P_x(S(n)=y), \; x,y\in\Z^d
\] 
satisfies the inequalities 
\[
G(x,y) ~\leq~G_S(x,y) ~=~ G_S(0, y-x), \quad \forall x,y\in E.
\] 
Hence, for any $x, u\in E$, 
\[
Q(x, x+nu) ~\geq~ Q(0, nu) ~=~ G(0, nu)/G(nu,nu) ~\geq~ G(0,nu)/G_S(0,0).
\]
When combined with \eqref{eq70-1}, the last inequality proves \eqref{eq70-2}. The proof of \eqref{eq70-3} is quite similar. 
\end{proof} 

To prove Proposition~\ref{pr7-100} it is sufficient to show that for any $u,v\in\overline{\cal C}\times\overline{\cal C}$, and  $(u_n),(v_n)\in E^\N$ with $\lim_n u_n/n = u$ and $\lim_n v_n/n = v$, the upper bound 
\be\label{eq70-4}
\limsup_{n\to\infty} \frac{1}{n} \log G(u_n, v_n) ~\leq~ - \sup_{\alpha\in D} \langle \alpha, v - u\rangle, 
\ee
and the lower bound 
\be\label{eq70-5}
\liminf_{n\to\infty} \frac{1}{n} \log G(u_n, v_n) ~\geq~ - \sup_{\alpha\in D} \langle \alpha, v - u\rangle.
\ee
hold. The following lemma proves that these limits are well defined for all $u,v\in\overline{\cal C}$. 
\begin{lemma}\label{lem70-1} Under the hypotheses (C1'), for any $u\in\overline{\cal C}$, there is a sequence $(u_n)\in E^\N$  such that $u_n/n \to u$ as $n\to\infty$. 
\end{lemma}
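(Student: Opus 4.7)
The strategy is to approximate any $u\in\overline{\cal C}$ by perturbing $n u$ slightly into the interior ${\cal C}$, so that the perturbation vanishes after normalisation but is large enough to absorb the rounding error to $\Z^d$. Concretely, I would take $u_n$ to be the coordinatewise nearest integer to $n u + \sqrt{n}\,v_0$, for a suitable fixed lattice point $v_0\in{\cal C}\cap\Z^d$, and then verify that $u_n\in E$ for $n$ large and that $u_n/n\to u$.

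The first step is to exhibit $v_0\in{\cal C}\cap\Z^d$ together with a radius $\rho>0$ such that the Euclidean ball $B(v_0,\rho)$ lies in ${\cal C}$. Since ${\cal C}$ is a non-empty open convex cone, it contains some ball $B(x_0,r)$; by the cone property $B(t x_0,t r)\subset{\cal C}$ for every $t>0$, and once $t r>\sqrt{d}/2$ such a ball is guaranteed to contain an integer point. Any such point can be taken as $v_0$, and the existence of $\rho$ then follows from the openness of ${\cal C}$.

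With $v_0$ fixed, the main geometric observation to establish is
\[
B\bigl(n u + \sqrt{n}\,v_0,\,\sqrt{n}\,\rho\bigr) ~=~ n u + \sqrt{n}\,B(v_0,\rho) ~\subset~ \overline{\cal C} + {\cal C} ~\subset~ {\cal C},
\]
where the last inclusion holds for any open convex cone: for $a\in\overline{\cal C}$ and $b\in{\cal C}$, the midpoint $\tfrac{1}{2}(a+b)$ lies in ${\cal C}$ by the standard segment lemma for convex sets with non-empty interior, and then $a+b = 2\cdot\tfrac{1}{2}(a+b)\in{\cal C}$ by the cone property. Once $\sqrt{n}\,\rho > \sqrt{d}/2$, the rounded point $u_n$ lies inside the above ball, hence in ${\cal C}\cap\Z^d\subset E$. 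Finally $u_n/n = u + v_0/\sqrt{n} + O(1/n)\to u$, which gives the desired sequence.

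The only delicate step is producing $v_0$ in the open cone; all the remaining estimates are purely geometric and routine. The $\sqrt{n}$-scale of the perturbation is essentially forced: it must dominate the $O(1)$ rounding error yet be negligible compared to $n$ in the limit.
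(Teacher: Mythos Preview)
Your argument is correct. The paper's proof rests on the same geometric idea --- the open cone, once scaled, contains balls large enough to capture lattice points --- but packages it differently: it sets $u_n=[nu]$, the nearest point of $E$ to $nu$, and shows $|u-[nu]/n|\to 0$ by approximating $u$ by an interior point $u_\eps\in{\cal C}$ with $B(u_\eps,r_\eps)\subset{\cal C}$, so that $B(nu_\eps,nr_\eps)\subset{\cal C}$ contains a lattice point once $nr_\eps$ is large, giving $|nu-[nu]|\le n(|u-u_\eps|+r_\eps)$. Your construction with a fixed interior lattice point $v_0$ and the sublinear shift $\sqrt{n}\,v_0$ is more explicitly constructive and avoids the double limit in $\eps$ and $n$; the paper's choice $u_n=[nu]$ is less explicit but has the virtue of being canonical (independent of any auxiliary $v_0$). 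Both routes are elementary. One minor remark: your claim that the $\sqrt{n}$ scale is ``essentially forced'' is a slight overstatement --- any scale $a_n$ with $a_n\to\infty$ and $a_n/n\to 0$ would serve equally well.
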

\begin{proof}
For $u\in\overline{\cal C}$ we denote by $[u]\in E$ a nearest to $u$ point of $E$, i.e.  such that  
\[
\min_{v\in E} |u-v| = | u - [u]|, 
\]
To prove our lemma it is sufficient to show that for any $u\in\overline{\cal C}$, 
\be\label{eq70-6} 
\lim_{n\to\infty} \left| u - \frac{ [nu]}{n}\right| = 0. 
\ee
Let $u\in\overline{\cal C}$. Then for any $\eps > 0$ there is $u_\eps \in {\cal C}$ such that $|u-u_\eps| < \eps$. Moreover, the set ${\cal C}$ being open, there is $0 < r_\eps < \eps$ for which an open ball $B(u_\eps,r_\eps)$ centered at $u_\eps$ and having radius $r_\eps$ is included to $\overline{\cal C}$. For any $n\geq 1$, we get therefore 
\[
B(nu_\eps, n r_\eps) \subset \overline{\cal C}.
\]
Since $E = \overline{\cal C}\cap \Z^d$,  any open ball $B(a, r)\subset\overline{\cal C}$ with $r > d$ contains some point of the set $E$, and consequently, for $n > d/r_\eps$, 
\[
 \left| u - \frac{ [nu]}{n}\right|   \leq |u - u_\eps| + r_\eps ~\leq |u - u_\eps|  + \eps
\] 
Letting therefore first $n\to\infty$ and next $\eps\to 0$ one gets  \eqref{eq70-6}. \end{proof} 

The upper bound~\eqref{eq70-4} follows from the following statement:
\begin{lemma}\label{lem70-2} Under the hypotheses (C0'), (C1'), (C2'), for any $\alpha\in D$ and  $(u_n),(v_n)\in E^\N$ with $\lim_n u_n/n = u$ and $\lim_n v_n/n = v$, 
\be\label{eq70-7}
\limsup_{n\to\infty} \frac{1}{n} \log G(u_n, v_n) ~\leq~ - \langle\alpha, v-u\rangle.
\ee
\end{lemma}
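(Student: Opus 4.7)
The plan is to prove the upper bound by the classical Cram\'er--Chernoff (exponential Chebyshev) change of measure applied to the underlying unkilled walk $(S(t))$ on $\Z^d$ introduced in the proof of Corollary~\ref{corCC-1}. Since $(X(t))$ is $(S(t))$ killed on leaving $E$, one has the pointwise domination
\[
G(u_n,v_n)\ \leq\ G_S(u_n,v_n)\ =\ \sum_{t\geq 0}\P_{u_n}(S(t)=v_n),
\]
so it suffices to prove the same upper bound for $G_S$.

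First I would treat $\alpha$ in the interior of $D$, i.e.\ with $R(\alpha)<1$. Exponential Markov applied to the i.i.d.\ increments of $S$ gives
\[
\P_{u_n}(S(t)=v_n)\ \leq\ e^{-\langle\alpha,\,v_n-u_n\rangle}\,\E\bigl[e^{\langle\alpha,\,S(t)-S(0)\rangle}\bigr]\ =\ e^{-\langle\alpha,\,v_n-u_n\rangle}\,R(\alpha)^{t},
\]
and summing the geometric series (convergent since $R(\alpha)<1$) yields
\[
G_S(u_n,v_n)\ \leq\ \frac{e^{-\langle\alpha,\,v_n-u_n\rangle}}{1-R(\alpha)}.
\]
Taking logarithms, dividing by $n$, and using $u_n/n\to u$, $v_n/n\to v$ then produces \eqref{eq70-7} for every such $\alpha$, the constant $-\log(1-R(\alpha))$ being absorbed in the $\limsup$.

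To extend to $\alpha\in\partial D$, where $R(\alpha)=1$, I would approximate from inside $D$. Assumption (C2') gives $\nabla R(0)=\sum_{x}x\mu(x)\neq 0$, so the convex function $R$ strictly decreases from $R(0)=1$ in the direction $-\nabla R(0)$ and there is some $\alpha_{0}$ with $R(\alpha_{0})<1$. For any $\alpha\in\partial D$ the convex combination $\alpha_{k}=(1-\tfrac{1}{k})\alpha+\tfrac{1}{k}\alpha_{0}$ satisfies
\[
R(\alpha_{k})\ \leq\ (1-\tfrac{1}{k})R(\alpha)+\tfrac{1}{k}R(\alpha_{0})\ <\ 1
\]
by convexity of $R$, hence $\alpha_{k}$ lies in the interior of $D$ and $\alpha_{k}\to\alpha$. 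Applying the case already established to each $\alpha_{k}$ and using continuity of $\beta\mapsto\langle\beta,v-u\rangle$ as $k\to\infty$ yields \eqref{eq70-7} at $\alpha$.

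The hard part is really the boundary case: at $R(\alpha)=1$ the geometric-series shortcut breaks down, and a direct tilted-measure argument would require invoking transience of the walk with step distribution $\tilde\mu(x)=e^{\langle\alpha,x\rangle}\mu(x)$ together with a first-passage bound $G_{\tilde S}(x,y)\leq G_{\tilde S}(0,0)<\infty$. The approximation route above sidesteps this because the target bound $-\langle\alpha,v-u\rangle$ is continuous in $\alpha$, so the nonempty interior of $D$ provided by (C2') is all that is needed.
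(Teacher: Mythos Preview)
Your proof is correct but takes a different route from the paper. The paper works directly with the tilted Green function: for any $\alpha\in D$ it sets $\mu_\alpha(x)=e^{\langle\alpha,x\rangle}\mu(x)$, observes that the corresponding killed twisted walk $(X_\alpha(t))$ satisfies $G_\alpha(x,y)=e^{\langle\alpha,y-x\rangle}G(x,y)$, and then bounds $G_\alpha(x,y)\le G_\alpha(y,y)=G(y,y)\le G_S(0,0)<\infty$, the last inequality coming from transience of the unkilled walk $(S(t))$ under (C2'). This yields $G(x,y)\le e^{-\langle\alpha,y-x\rangle}G_S(0,0)$ uniformly for all $\alpha\in D$, with no need to distinguish interior from boundary.

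Your approach replaces this one-line tilted-measure bound by a Chernoff/geometric-series estimate valid only when $R(\alpha)<1$, and then passes to the boundary by convex approximation using that (C2') forces $\nabla R(0)\ne 0$ and hence $\mathrm{int}\,D\ne\emptyset$. This is a bit longer but has the virtue of never invoking transience of $(S(t))$; you correctly point out in your last paragraph that the alternative is exactly the tilted-walk transience argument, which is what the paper does. Both arguments lean on (C2') in complementary ways: the paper uses the nonzero mean to get transience, you use it to get a nonempty interior of $D$.
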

\begin{proof} The proof of this lemma uses the method of the exponential change of measure. For $\alpha\in D$, the twisted measure 
$
\mu_\alpha(x) = \exp(\langle \alpha, x\rangle)\mu(x)$ 
is substochastic on $\Z^d$. Consider a twisted random walk $(X_\alpha(t))$ on $E$ with transition probabilities  
$
\P_x(X_\alpha(1) = y) = \mu_\alpha(y-x), \;x,y\in E$.
Since clearly, 
\[
\P_x(X_\alpha(t) = y) ~=~ \exp(\langle\alpha, y-x\rangle) \P_x(X(t)=y)
\]
for all $t\in\N$ and $x,y\in E$, the Green function $G_\alpha(x,y)$ of the twisted random walk $(X_\alpha(t))$ satisfies the identities 
\[
G_\alpha(x,y) ~=~ \exp(\langle\alpha, y-x\rangle) G(x,y), \quad x,y\in E, 
\]
and in particular, for any $y\in E$, 
\[
G_\alpha(y,y) = G(y,y) ~\leq~ G_S(0,0) 
\] 
where $G_S(x,y)$ denotes the Green function of the homogeneous random walk $(S(t))$ on $\Z^d$ with transition probabilities 
$
\P_x(S(1) = y) ~=~\mu(y-x), \; x,y\in \Z^d$. 
The homogeneous random walk being transient, from this it follows that  
\begin{align*}
G(x,y) &\leq \exp(- \langle\alpha, y-x\rangle) G_\alpha(x,y) \\ &\leq~ \exp(- \langle\alpha, y-x\rangle) \P_x(X_\alpha(t) = y \; \text{for some} \; t \geq 0) G_S(0,0) \\ 
&\leq~ \exp(- \langle\alpha, y-x\rangle) G_S(0,0),
\end{align*} 
and the last inequality implies \eqref{eq70-7}. 
\end{proof} 

To prove the lower bound \eqref{eq70-5} we need the following lemmas. The first lemma is a straightforward consequence of the communication condition (C0'). Recall that 
under the hypotheses (C0'), there is a finite set 
\[{\cal E}_0\subset\supp(\mu) = \{x\in\Z^d: \mu(x) > 0\}, 
\]
such that for any  $x,y\in E$ there exists a sequence $x_0,x_1,\ldots, x_m\in E$  with 
$x_0 = x$, $x_m =y$, $m\leq \kappa |y- x|$, and $x_j-x_{j-1}\in{\cal E}_0$ for all $j=1,\ldots,k$. Denote 
\[
\delta = \min_{x\in{\cal E}_0} \mu(x) ~>~ 0. 
\]
\begin{lemma}\label{lem70-3} Under the hypotheses (C0'), for any $x,y\in E$, there is $m\leq \kappa |y-x|$ such that 
\[
\P_x(X(m)=y) ~\geq~\delta^{\kappa|y-x|} 
\]
\end{lemma}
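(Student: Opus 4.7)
The statement is essentially a direct consequence of the communication condition (C0'), so the plan is short and purely combinatorial; the only care needed concerns the relationship between the length $m$ of the connecting path and the bound $\kappa|y-x|$ appearing in the exponent.

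First I would dispose of the trivial case $x=y$ by taking $m=0$, so that $\P_x(X(0)=y)=1\geq\delta^{0}$. Assume now $x\neq y$. I would then invoke condition (C0')(b) to obtain a finite sequence $x_0,x_1,\ldots,x_m\in E$ with $x_0=x$, $x_m=y$, $m\leq\kappa|y-x|$, and $x_j-x_{j-1}\in{\cal E}_0$ for every $j\in\{1,\ldots,m\}$. Since every intermediate point $x_j$ lies in $E$, the killed random walk $(X(t))$ can follow exactly this path without being sent to $\vartheta$; consequently
\[
\P_x(X(m)=y)\;\geq\;\P_x\bigl(X(j)=x_j,\ 1\leq j\leq m\bigr)\;=\;\prod_{j=1}^{m}\mu(x_j-x_{j-1}).
\]

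Next I would use that each increment $x_j-x_{j-1}$ lies in the finite set ${\cal E}_0$, and (C0')(a) gives $\mu(z)>0$ for $z\in{\cal E}_0$, so by the definition $\delta=\min_{z\in{\cal E}_0}\mu(z)>0$ each factor is at least $\delta$. This yields
\[
\P_x(X(m)=y)\;\geq\;\delta^{m}.
\]

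Finally, since $\delta\in(0,1]$ and $m\leq\kappa|y-x|$, the elementary inequality $\delta^m\geq\delta^{\kappa|y-x|}$ closes the argument. (If $\delta=1$ the inequality is trivial; if $\delta<1$ it follows from monotonicity of $t\mapsto\delta^{t}$.) The only minor obstacle, and it is a cosmetic one, is to observe that the path provided by (C0') lives entirely inside $E$, so that the probability estimate applies to the killed chain $(X(t))$ and not merely to the ambient homogeneous walk $(S(t))$; this is precisely what (C0')(b) guarantees.
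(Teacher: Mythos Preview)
Your proof is correct and follows essentially the same approach as the paper: invoke the path $x_0,\ldots,x_m$ from (C0'), bound each step by $\delta$, and use $m\leq\kappa|y-x|$ together with $\delta\leq 1$ to get $\delta^m\geq\delta^{\kappa|y-x|}$. Your version is in fact slightly more careful, since you explicitly handle the case $x=y$ and remark that the path staying in $E$ is what makes the bound valid for the killed chain.
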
 
\begin{proof} Because of the assumption (C0'), for any  $x,y\in E$ there is a sequence $x_0$, $x_1$, \ldots, $x_m{\in}E$  with 
$x_0 = x$, $x_m =y$ and $m\leq \kappa |y- x|$ such that $x_j-x_{j-1}\in{\cal E}_0$ for all $j=1,\ldots,m$. Hence, for any $ j=1,\ldots, m$, 
one gets 
\[
\P_{x_{j-1}}(X(1) = x_j) ~=~ \tilde\mu(x_j - x_{j-1}) \geq \delta  > 0,
\]  
and consequently, 
\[
\P_{x}(X(m) = y) ~\geq~ \delta^m ~\geq~\delta^{\kappa |y-x|} 
\]
\end{proof} 

For a real number $s>0$ we denote by $[s]$ the integer part of $s$. The lower bound \eqref{eq70-5} will be proved in two steps. First we establish this bound for the points $u\not=v$ in the interior ${\cal C}$ of the cone $\overline{\cal C}$, and next we extend this bound to the whole cone $\overline{\cal C}$ by showing that  the left and the right sides of \eqref{eq70-5} are continuous on $\overline{\cal C}\times\overline{\cal C}$. To prove the lower bound for $u,v\in{\cal C}$, we need 
\begin{lemma}\label{lem70-4} For any $u\not= v$, $u,v\in{\cal C}$, there is a positive real number $T_{u,v} >0$ such that for any sequence $(u_n)\in E^\N$ with $\lim_n u_n/n = u$,  
\be\label{eq70-8} 
\lim_{\eps \to 0} \liminf_{n\to\infty} \frac{1}{n} \log\P_{u_n}( |X([T_{u,v}n]) - v n| \leq \eps n) ~\geq~ - \sup_{\alpha\in D} \langle \alpha, v - u\rangle 
\ee
\end{lemma}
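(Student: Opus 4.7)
The plan is to prove \eqref{eq70-8} by the classical exponential (Cramér) tilting: tilt by an extremizer $\alpha^*\in D$ of the linear functional $\alpha\mapsto\langle\alpha,v-u\rangle$, and use the weak law of large numbers under the tilted measure. The openness of ${\cal C}$ is crucial, because it guarantees that the straight segment from $u$ to $v$ lies at positive distance from $\partial{\cal C}$, which enables an elementary control of the tilted walk staying inside $E$.

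First I would select $\alpha^*$ and the time scale $\theta_{u,v}$. Under (C2'), $D$ is a compact convex subset of $\R^d$ on an open neighborhood of which $R$ is real-analytic. Since $v\neq u$, the supremum $\sup_{\alpha\in D}\langle\alpha,v-u\rangle$ is strictly positive and is attained at some $\alpha^*$ on the boundary $\{R=1\}$, and the Lagrange condition yields
\[
\nabla R(\alpha^*) ~=~ \lambda (v-u) \quad \text{for some $\lambda > 0$.}
\]
Set $\theta_{u,v}:=1/\lambda>0$. The twisted measure $\mu_{\alpha^*}(x):=e^{\langle\alpha^*,x\rangle}\mu(x)$ is then a genuine probability on $\Z^d$ (because $R(\alpha^*)=1$) with mean $m^*=\nabla R(\alpha^*)=(v-u)/\theta_{u,v}$, and with finite second moment because $R$ is finite in a full neighborhood of $\alpha^*$. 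Let $(S^{\alpha^*}(t))$ denote a homogeneous walk on $\Z^d$ with step law $\mu_{\alpha^*}$ and $\P^{\alpha^*}_x$ its law starting at $x$.

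Next, the path-by-path change-of-measure identity gives (with $\theta=\theta_{u,v}$)
\begin{align*}
\P_{u_n}\!\bigl(|X([\theta n])-vn|\leq\eps n\bigr)
&= \E^{\alpha^*}_{u_n}\!\Bigl[e^{-\langle\alpha^*,S^{\alpha^*}([\theta n])-u_n\rangle};\\
&\qquad |S^{\alpha^*}([\theta n])-vn|\leq\eps n,\; S^{\alpha^*}(t)\in E\;\forall\,t\leq[\theta n]\Bigr] \\
&\geq e^{-\langle\alpha^*,\,vn-u_n\rangle-|\alpha^*|\eps n}\cdot \P^{\alpha^*}_{u_n}(\text{same event}).
\end{align*}
It then remains to show that the $\P^{\alpha^*}$-probability on the right tends to $1$. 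Under $\P^{\alpha^*}_{u_n}$, the expected trajectory $u_n+tm^*$ lies, when divided by $n$, within $o(1)$ uniformly on $t\leq[\theta n]$ of the segment $\{u+s(v-u):s\in[0,1]\}$, which by convexity and openness of ${\cal C}$ sits at positive distance $\rho>0$ from $\partial{\cal C}$. Kolmogorov's maximal inequality applied to the centered partial sums (finite-variance steps) yields $\max_{t\leq[\theta n]}|S^{\alpha^*}(t)-u_n-tm^*|=o(n)$ in probability. Hence, for every $\eps\in(0,\rho)$, the trajectory remains in ${\cal C}$ and finishes inside the $\eps n$-ball around $vn$ with probability tending to $1$. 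Combining with $u_n/n\to u$ gives $\frac{1}{n}\log\P_{u_n}(\cdots)\geq -\langle\alpha^*,v-u\rangle-|\alpha^*|\eps+o(1)$; taking $\liminf_n$ and then $\eps\to 0$ produces \eqref{eq70-8}.

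The principal obstacle is the simultaneous control of the endpoint and of the entire path of the tilted walk; the positive-distance $\rho$ provided by the openness of ${\cal C}$ is exactly what allows the simple $L^2$-maximal-inequality argument to absorb path-fluctuations without a delicate boundary analysis. The convex-analytic extraction of $\alpha^*$ and the Lagrange identification of $\theta_{u,v}$ are routine once (C2') supplies compactness of $D$ and smoothness of $R$ near its boundary.
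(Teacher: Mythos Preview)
Your proof is correct and follows essentially the same route as the paper's. Both arguments select the boundary point $\alpha^*\in\partial D$ (the paper calls it $\alpha_{v-u}$) at which $\nabla R(\alpha^*)$ is parallel to $v-u$, set $\theta_{u,v}=|v-u|/|\nabla R(\alpha^*)|$ (which coincides with your $1/\lambda$), and exploit openness of ${\cal C}$ so that the tilted mean path, the segment from $u$ to $v$, lies at positive distance from the boundary.

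The only notable difference is packaging: the paper invokes the lower bound of Mogulskii's sample-path large deviation theorem to obtain
\[
\lim_{\eps\to0}\liminf_n\frac{1}{n}\log\P_{u_n}\Bigl(\sup_{t\in[0,\theta]}\bigl|n^{-1}S([nt])-\phi_{u,v}(t)\bigr|<\eps\Bigr)\ \geq\ -\langle\alpha_{v-u},v-u\rangle,
\]
and then argues that on this event the killed walk coincides with the free walk. You instead perform the exponential tilt by hand and use Kolmogorov's $L^2$ maximal inequality to control the whole path. Your version is slightly more elementary and self-contained (it avoids importing a functional LDP), while the paper's citation of Mogulskii is terser; conceptually they are the same argument, since the proof of the Mogulskii lower bound is precisely the tilting-plus-LLN you carry out.
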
 
\begin{proof}
Under the hypotheses (C0'), the function $\log R$ is strictly convex, the set $D = \{\alpha :~R(\alpha)\leq 1\}$ is convex and compact, the gradient $\nabla\log R(\alpha)$ exists everywhere on  $\R^d$ and does not vanish on the boundary $\partial D = \{\alpha\in\R^d :~ R(\alpha) = 1\}$, and the mapping $\alpha \to \nabla\log R(\alpha) = \nabla R(\alpha)/R(\alpha)$ determines a homeomorphism between $\partial\tilde{D}$ and the unit sphere $S^d$ in $\R^d$ (see \cite{Hennequin}) For any non zero vector $u\in\R^d$ there is therefore a unique point $\alpha_{u} \in \partial D$ for which the gradient vector $\nabla R(\alpha_{u} )$ is proportional to $u$:
\be\label{eq70-9} 
\frac{\nabla R(\alpha_u)}{|\nabla R (\alpha_u)|} = \frac{u}{|u|}
\ee
and 
\be\label{eq70-10}
\sup_{\alpha\in D} \langle \alpha, u\rangle = \langle \alpha_u, u\rangle. 
\ee
To prove our lemma we use large deviation estimates (see Mogulkii's theorem in the book of Dembo and Zeitouni~\cite{D-Z}) for scaled processes 
\[
Z_n(t) = \frac{1}{n} S([nt]), \; t\in \R_+
\]
where $(S(t))$ is a homogeneous random walk on $\Z^d$ with transition probabilities $\P_x(S(1)=y) = \mu(y-x)$ and $[nt]$ denotes the integer part of $nt$. Recall that a function $\phi:[0,T]\to \R^d$ is absolutely continuous if there existes an integrable function $\dot\phi :[0,T]\to \R^d$ such that 
\[
\phi(t) = \phi(0) + \int_0^t \dot\phi(s) \, ds, \quad \forall t\in[0,T]. 
\] 
The lower large deviation bound of  Mogulkii's theorem proves that for any $T>0$, $\eps > 0$ and any absolutely continuous function $\phi: [0,T]\to\R^d$, if a sequence of points $(u_n)\in (\Z^d)^\N$ converges to $\phi(0)$ then 
\[
 \liminf_{n\to\infty} \frac{1}{n}\log \P_{u_n}\left( \sup_{t\in[0,T]}|Z_n(t) - \phi(t)| < \eps \right)  ~\geq~ - \int_0^T( \log R)^*(\dot\phi(t)) \, dt 
\]
where $\P_{u_n}$ denotes the conditional probability given that $S(0) = u_n$ and $(\log R)^*$ denotes the convex conjugate of the function $\log R$ defined by  
\[
(\log R)^*(w) ~=~\sup_{\alpha\in \R^d} \left(  \langle \alpha, w\rangle - R(\alpha) \right), \; w\in\R^d.
\]
When applied with an affine function $\phi(t) = u + wt$, $t\in[0,T]$, this inequality implies that 
\[
\lim_{\substack{\eps \to 0 \\ \eps > 0}}\liminf_{n\to\infty} \frac{1}{n}\log \P_{u_n}( \sup_{t\in[0,T]}| Z_n(t)  - u - wt | < \eps) ~\geq~ - T (\log R)^*(w). 
\]
Recall moreover that if for a given $w\in\R^r$ there is point $\alpha\in\R^d$ such that $\nabla (\log R)(\alpha) = w$ and $R(\alpha) = 1$, then $(\log R)^*(w) = \langle \alpha, w\rangle - \log R(\alpha) = \langle \alpha, w\rangle$ and consequently, from the last inequality on gets 
\[
\lim_{\substack{\eps \to 0 \\ \eps > 0}}\liminf_{n\to\infty} \frac{1}{n}\log \P_{u_n}( \sup_{t\in[0,T]}| Z_n(t)  - u - wt | < \eps) ~\geq~ - T \langle \alpha, w\rangle. 
\]
Consider now two points $u\not= v$, $u,v\in{\cal C}$ and let a sequence $(u_n)\in E^\N$ be such that $\lim_n u_n/n = u$. Then  for the affine function $\varphi_{u,v}(t) = u + w_{v-u}t$ with $w_{v-u} = \nabla R(\alpha_{v-u})$ and $T = T_{v-u} = |v-u|/|\nabla R(\alpha_{v-u})|$, from the last inequality it follows that  
\begin{align}
\lim_{\substack{\eps \to 0 \\ \eps > 0}}\liminf_{n\to\infty} \frac{1}{n}\log \P_{u_n}&\left( \sup_{t\in[0,T_{v-u}]} \left| Z_n(t) - \varphi_{u,v}(t) \right| < \eps\right) \nonumber\\ 
&\hspace{2cm}\geq - \langle\alpha_{v-u}, v-u\rangle \label{eq70-11} 
\end{align}
because according to the definition of the mapping $u \to \alpha_u$, $T_{v-u}w_{v-u} = v-u$ and $R(\alpha_{v-u}) = 1$. The set ${\cal C}$ being open and convex, for small $\eps > 0$ and large $n$, on the event 
\[
\left\{ \sup_{t\in[0,T_{v-u}]} \left| Z_n(t) - u - w_{v-u}t \right| < \eps\right\},
\]
our random walk $(S(t))$ starting at $u_n$ does not exit from $E= \overline{\cal C}\cap \Z^d$. This proves that the left hand side of \eqref{eq70-11} is equal to the similar probability for the  random walk $(X(t))$ starting at $X(0) = u_n$ and killed upon the time $\tau_\vartheta$ of the first exit from $E$, and consequently,  
\begin{multline}\label{eq70-12}
\lim_{\substack{\eps \to 0 \\ \eps > 0}}\liminf_{n\to\infty} \frac{1}{n}\log \P_{u_n}\!\!\left( \sup_{t\in[0,T_{v-u}]} \left| \frac{1}{n} X([nt]) - u - w_{v-u}t \right| < \eps, \; \tau_\vartheta > nT_{v-u} \right) \\ \geq - \langle\alpha_{v-u}, v-u\rangle . 
\end{multline}
Remark finally that  on the event 
\[
\left\{ \sup_{t\in[0,T_{v-u}]} \left| \frac{1}{n} X([nt]) - u - w_{v-u}t \right| < \eps, \; \tau_\vartheta > nT_{v-u}\right\} 
\]
the inequality $|X([T_{v-u} n]) - v| \leq \eps$ holds. From \eqref{eq70-12} it follows therefore that 
\[
\lim_{\eps > 0} \liminf_{n\to\infty} \frac{1}{n}\log \P_{u_n} (|X([T_{v-u} n]) - v| \leq \eps) ~\geq~  -  \langle \alpha_{u-v},u-v\rangle. 
\]
When combined with \eqref{eq70-10} the last relation proves \eqref{eq70-8} 
\end{proof} 

Now, we are ready to prove the lower bound \eqref{eq70-5} for $u\not= v$, $u,v\in{\cal C}$. This is a subject of the following lemma. 

\begin{lemma}\label{lem70-5} Suppose that $u\not= v$, $u,v\in{\cal C}$ and let two sequences $(u_n), (v_n)\in E^\N$ be such that $\lim_n u_n/n = u$ and $\lim_n v_n/n = v$. Then under the hypotheses (C0'), (C1') and (C2'), \eqref{eq70-5} holds. 
\end{lemma}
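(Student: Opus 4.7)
The plan is to combine the large-deviation lower bound of Lemma~\ref{lem70-4} with the communication condition of Lemma~\ref{lem70-3}. Lemma~\ref{lem70-4} provides, at time $[\theta_{u,v}n]$, a lower bound on the probability that the killed walk starting from $u_n$ lies in the Euclidean ball of radius $\eps n$ around $vn$; Lemma~\ref{lem70-3} then lets us reach the precise point $v_n$ from any nearby lattice point in $E$ at a controlled subexponential cost (that vanishes as $\eps\to 0$).

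Concretely, fix a small $\eps>0$ and write, for large $n$ (using $v_n/n\to v$),
\[
\{y\in E:~|y-vn|\leq \eps n\}\subset \{y\in E:~|y-v_n|\leq 2\eps n\}.
\]
The number of lattice points in this set is at most $C_d(\eps n)^d$, so by pigeonhole there exists $y_n\in E$ with $|y_n-vn|\leq \eps n$ and
\[
\P_{u_n}\bigl(X([\theta_{u,v}n])=y_n\bigr)\ \geq\ \frac{1}{C_d(\eps n)^d}\,\P_{u_n}\bigl(|X([\theta_{u,v}n])-vn|\leq \eps n\bigr).
\]
Applying Lemma~\ref{lem70-3} to the pair $(y_n,v_n)\in E\times E$, since $|y_n-v_n|\leq 2\eps n$ there exists $m_n\leq 2\kappa\eps n$ with
\[
\P_{y_n}\bigl(X(m_n)=v_n\bigr)\ \geq\ \delta^{\kappa|v_n-y_n|}\ \geq\ \delta^{2\kappa\eps n}.
\]
The Markov property for the killed walk gives
\[
G(u_n,v_n)\ \geq\ \P_{u_n}\bigl(X([\theta_{u,v}n])=y_n\bigr)\,\P_{y_n}\bigl(X(m_n)=v_n\bigr).
\]

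Taking logarithms, dividing by $n$, using that $\frac{1}{n}\log\bigl(C_d(\eps n)^d\bigr)\to 0$, and invoking Lemma~\ref{lem70-4},
\[
\liminf_{n\to\infty}\frac{1}{n}\log G(u_n,v_n)\ \geq\ \liminf_{n\to\infty}\frac{1}{n}\log\P_{u_n}\bigl(|X([\theta_{u,v}n])-vn|\leq \eps n\bigr)\,+\,2\kappa\eps\log\delta.
\]
Letting first $n\to\infty$ and then $\eps\to 0$, the first term converges to $-\sup_{\alpha\in D}\langle\alpha,v-u\rangle$ by \eqref{eq70-8} and the second vanishes, which yields \eqref{eq70-5}.

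The one point that requires care is the fact that the communication step from $y_n$ to $v_n$ takes place inside the killed walk, not the free walk on $\Z^d$; this is exactly what (C0') guarantees, because the finite path $x_0,\ldots,x_m$ produced there lies in $E=\overline{\cal C}\cap\Z^d$. Apart from this, the argument is a straightforward two-step chaining, the only cost being the polynomial pigeonhole factor $C_d(\eps n)^d$ and the exponential factor $\delta^{2\kappa\eps n}$, both of which produce errors tending to $0$ after dividing by $n$ and letting $\eps\to 0$. The use of $u,v\in{\cal C}$ (rather than just $\overline{\cal C}$) is already absorbed in Lemma~\ref{lem70-4}, where it guarantees that the affine interpolating trajectory $\phi_{u,v}$ stays strictly inside the open cone, so that the scaled random walk does not hit $\partial{\cal C}$ before time $\theta_{v-u}$.
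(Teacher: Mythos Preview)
Your proof is correct and follows essentially the same approach as the paper: use Lemma~\ref{lem70-4} to land within $\eps n$ of $vn$, then Lemma~\ref{lem70-3} to reach $v_n$ at cost $\delta^{O(\eps n)}$, and let $\eps\to 0$. The only difference is that the paper avoids the pigeonhole step by summing over all $z$ in the ball,
\[
G(u_n,v_n)\ \geq\ \sum_{z\in E:\,|z-nv|\leq \eps n}\P_{u_n}\bigl(X([\theta_{u,v}n])=z\bigr)\,G(z,v_n)\ \geq\ \delta^{\kappa\eps n}\,\P_{u_n}\bigl(|X([\theta_{u,v}n])-nv|\leq \eps n\bigr),
\]
so the polynomial factor $C_d(\eps n)^d$ never appears; your detour through a single $y_n$ is harmless but unnecessary.
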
 
\begin{proof} Indeed,  Lemma~\ref{lem70-3} combined with the inequality 
\[
G(u_n,v_n) ~\geq~ \sum_{z\in E: ~ |z - nv| \leq \eps n} \P_{u_n}\left( X([T_{u,v}n]) = z \right) G(z, v_n) \
\]
shows that for any $\eps > 0$,  
\[
G(u_n,v_n) ~\geq~ \delta^{\kappa \eps n} \P_{u_n} \left( |X([T_{u,v} n]) - nv| \leq \eps n \right) 
\]
and hence, using \eqref{eq70-8} one gets 
\begin{align*}
\liminf_{n\to\infty} \frac{1}{n} \log G(u_n,v_n) &~\geq~ \lim_{\eps\to 0} \liminf_{n\to\infty} \frac{1}{n} \log \P_{u_n} \left( |X([T_{u,v} n]) - nv| \leq \eps n \right) \\ 
&~\geq~ - \sup_{\alpha\in D} \langle\alpha, v-u\rangle. 
\end{align*} 
\end{proof} 
Consider now for $u,v\in \overline{\cal C}\times\overline{\cal C}$ 
\[
J(u,v) ~=~ \liminf_{n\to\infty} \frac{1}{n} \log G(u_n,v_n) 
\]
where the limit is taken over all sequences $(u_n), (v_n)\in E^\N$ with $\lim_n u_n/n = u$ and $\lim_n v_n/n = v$. Remark that by Lemma~\ref{lem70-1}, the function $J$ is well defined everywhere on $\overline{\cal C}\times\overline{\cal C}$. To extend the lower bound~\eqref{eq70-5} for $u,v\in\overline{\cal C}\times\overline{\cal C}$, we will use 

\begin{lemma}\label{lem70-6} Under the hypotheses (C0'), (C1') and (C2'), the function $J$ is finite and continuous on $\overline{\cal C}\times\overline{\cal C}$. 
\end{lemma}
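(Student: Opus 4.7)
The plan is to show that $J(u,v)$ coincides with the continuous function
$$
S(u,v)\;=\;-\sup_{\alpha\in D}\langle\alpha,v-u\rangle, \qquad (u,v)\in\overline{\cal C}\times\overline{\cal C}.
$$
Since $D$ is compact under (C2'), $S$ is finite everywhere and Lipschitz in $v-u$ (as the support function of $D$ composed with an affine map); once the identity $J=S$ is proved, both assertions of the lemma follow at once.

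The upper bound $J\leq S$ is immediate from Lemma~\ref{lem70-2}: taking the supremum over $\alpha\in D$ in $\limsup_n\tfrac{1}{n}\log G(u_n,v_n)\leq -\langle\alpha,v-u\rangle$ gives the desired estimate for every admissible sequence. For the lower bound $J\geq S$ on the interior ${\cal C}\times{\cal C}$ (with $u\neq v$), Lemma~\ref{lem70-5} already supplies the matching estimate, and Lemma~\ref{lem70-1} guarantees that admissible sequences exist.

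The heart of the proof is to extend the interior lower bound to the full closure $\overline{\cal C}\times\overline{\cal C}$ via the communication condition of Lemma~\ref{lem70-3}. Given $(u,v)$ and admissible $(u_n),(v_n)$, I pick approximating points $u',v'\in{\cal C}$ with $u'\neq v'$ and sequences $u'_n/n\to u'$, $v'_n/n\to v'$ furnished by Lemma~\ref{lem70-1}. Iterating the strong Markov property at the successive first hitting times yields
$$
Q(u_n,v_n)\;\geq\;Q(u_n,u'_n)\,Q(u'_n,v'_n)\,Q(v'_n,v_n),
$$
and by Lemma~\ref{lem70-3} the two outer factors are bounded below by $\delta^{\kappa(|u_n-u'_n|+|v_n-v'_n|)}$. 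Because the unkilled random walk on $\Z^d$ is transient under (C2') (its drift is non-zero), one has the uniform bound $1\leq G(y,y)\leq G_S(0,0)<+\infty$; combined with $G(u_n,v_n)\geq Q(u_n,v_n)$ and $Q(u'_n,v'_n)=G(u'_n,v'_n)/G(v'_n,v'_n)\geq G(u'_n,v'_n)/G_S(0,0)$, this gives
$$
G(u_n,v_n)\;\geq\;\frac{\delta^{\kappa(|u_n-u'_n|+|v_n-v'_n|)}}{G_S(0,0)}\;G(u'_n,v'_n).
$$
Taking $\tfrac{1}{n}\log$, passing to $\liminf$, and substituting $\liminf_n\tfrac{1}{n}\log G(u'_n,v'_n)=S(u',v')$ from the interior case yields
$$
J(u,v)\;\geq\;\kappa\bigl(|u-u'|+|v-v'|\bigr)\log\delta\;-\;\sup_{\alpha\in D}\langle\alpha,v'-u'\rangle.
$$
Letting $u'\to u$ and $v'\to v$ inside ${\cal C}$ (keeping $u'\neq v'$, which is possible because ${\cal C}$ is open and convex), Lipschitz continuity of the support function of $D$ delivers $J(u,v)\geq S(u,v)$, matching the upper bound.

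The main technical obstacle is controlling the multiplicative corrections on the exponential scale $n$ when replacing $(u'_n,v'_n)$ by $(u_n,v_n)$: the chained hitting inequality $Q(x,z)\geq Q(x,y)Q(y,z)$ is routine via strong Markov, but the passage from $Q$ to $G$ relies crucially on the uniform bound $G(y,y)\leq G_S(0,0)<+\infty$, and hence on the $\Z^d$-transience forced by the non-zero drift in (C2'). A subsidiary but necessary point is that Lemma~\ref{lem70-5} requires the interior pair to be distinct, so one must verify that $u',v'\in{\cal C}$ with $u'\neq v'$ can always be taken arbitrarily close to any $(u,v)\in\overline{\cal C}\times\overline{\cal C}$, even on the diagonal; the openness of ${\cal C}$ together with Lemma~\ref{lem70-1} makes this straightforward.
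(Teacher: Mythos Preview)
Your argument is correct, but it takes a different route from the paper's. You prove the full identity $J=S$ (which is exactly Proposition~\ref{pr7-100}) by combining the upper bound of Lemma~\ref{lem70-2}, the interior lower bound of Lemma~\ref{lem70-5}, and the communication estimate of Lemma~\ref{lem70-3} to push the lower bound to the boundary; continuity of $J$ then drops out because $S$ is continuous. The paper instead proves Lemma~\ref{lem70-6} \emph{directly} and more cheaply: it never invokes Lemma~\ref{lem70-5} or the function $S$, but uses only Lemma~\ref{lem70-2} for finiteness above, Lemma~\ref{lem70-3} for finiteness below, and then the chain inequality
\[
G(u_n,v_n)\;\ge\;\P_{u_n}\bigl(X(k_n)=\hat u_n\bigr)\,G(\hat u_n,\hat v_n)\,\P_{\hat v_n}\bigl(X(m_n)=v_n\bigr)
\]
together with Lemma~\ref{lem70-3} to show $|J(u,v)-J(\hat u,\hat v)|\le\kappa(|u-\hat u|+|v-\hat v|)\,|\log\delta|$, i.e.\ explicit Lipschitz continuity. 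The paper's ordering is therefore more modular---Lemma~\ref{lem70-6} stands on its own, and the identification $J=S$ on $\overline{\cal C}\times\overline{\cal C}$ is obtained afterwards by combining Lemma~\ref{lem70-5} with the continuity just established---whereas your approach collapses Lemma~\ref{lem70-6} and the concluding step of Proposition~\ref{pr7-100} into a single argument. Both are valid; yours is a little shorter overall, the paper's yields the Lipschitz constant and keeps the lemma logically independent of the large-deviation lower bound.
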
 
\begin{proof} Let $u,v\in \overline{\cal C}\times\overline{\cal C}$ and let two sequences $(u_n), (v_n)\in E^\N$ be such that $\lim_n u_n/n = u$ and $\lim_n v_n/n = v$. Then by Lemma~\ref{lem70-3}, for any $n\geq 0$ there ix $0 \leq m_n\leq \kappa |u_n-v_n|$ such that 
\[
G(u_n,v_n)  ~\geq~ \P_{u_n}(X(m_n)=v_n) ~\geq~ \delta^{\kappa|v_n-u_n|}
\]
and consequently, 
\[
J(u,v) ~\geq~ \liminf_{n\to\infty} \frac{1}{n} \log \delta^{\kappa|v_n-u_n|} ~=~ \kappa |u-v| \log\delta 
\]
Moreover,  from Lemma~\ref{lem70-2} it follows that for any $\alpha\in D$, 
\[
J(u,v) ~\leq~ - \langle\alpha, v-u\rangle.
\]
The function $J$ is therefore finite on $\overline{\cal C}\times\overline{\cal C}$. To prove its continuity, we use again Lemma~\ref{lem70-3}. Consider $(u,v), (\hat{u},\hat{v})\in \overline{\cal C}\times\overline{\cal C}$ and let sequences $(u_n),(v_n), (\hat{u}_n),(\hat{v}_n)\in E^\N$ be such that 
\[
\lim_{n\to\infty}  u_n/n = u, \quad \lim_{n\to\infty} v_n/n = v,  \quad  \lim_{n\to\infty}  \hat{u}_n/n = \hat{u}, \quad \lim_{n\to\infty}\hat{v}_n/n = \hat{v}, 
\]
and
\[
J(u,v) = \lim_n\frac{1}{n}\log G(u_n,v_n). 
\]
By Lemma~\ref{lem70-3}, for any $n\geq0$ there exist $0\leq k_n\leq \kappa|u_n - \hat{u}_n|$ and $m_n\leq \kappa|v_n-\hat{v}_n|$ such that 
\[
\P_{u_n}(X(k_n) = \hat{u}_n) ~\geq~\delta^{\kappa |u_n-\hat{u}_n|}  \quad \text{and} \quad \P_{\hat{v}_n}(X(m_n) = v_n) ~\geq~\delta^{\kappa |v_n-\hat{v}_n|}. 
\]
Using these inequalities together with  the inequality 
\[
G(u_n,v_n) ~\geq~ \P_{u_n}(X(k_n) = \hat{u}_n) G(\hat{u}_n,\hat{v}_n) \P_{\hat{v}_n}(X(m_n) = v_n) 
\]
we obtain 
\[
\frac{1}{n} \log G(u_n, v_n) ~\geq~ \frac{1}{n} \log G(\hat{u}_n,\hat{v}_n) + \kappa \frac{|u_n-\hat{u}_n| + |v_n-\hat{v}_n|}{n} \log\delta 
\]
and letting finally $n\to\infty$ we conclude that 
\[
J(u,v) ~\geq~ J(\hat{u},\hat{v}) + \kappa(|\hat{u}-u| + |\hat{v}-v|) \log\delta.
\]
Similarly 
\[
J(\hat{u},\hat{v})   ~\geq~ J(u,v) + \kappa(|\hat{u}-u| + |\hat{v}-v|) \log\delta,
\]
and consequently, for any $(u,v), (\hat{u},\hat{v})\in \overline{\cal C}\times\overline{\cal C}$,
\[
|J(\hat{u},\hat{v}) -  J(u,v) | ~\leq~ \kappa(|\hat{u}-u| + |\hat{v}-v|) |\log\delta|
\]
The last inequality proves that the function $J$ is Lipschitz continuous on $\overline{\cal C}\times\overline{\cal C}$ 
\end{proof} 

\medskip

Now we are ready to complete the proof of Proposition~\ref{pr7-100}. Recall that the upper bound~\eqref{eq70-4} is a straightforward consequence of Lemma~\ref{lem70-2}. According to the definition of the function $J :\overline{\cal C}\times\overline{\cal C} \to\R$, to prove the lower bound \eqref{eq70-5} it is sufficient to show that 
\be\label{eq70-13}
J(u,v) ~\geq~ - \sup_{\alpha\in D} \langle \alpha, v-u\rangle 
\ee
for all $(u,v)\in\overline{\cal C}\times\overline{\cal C}$. The last inequality is already proved by Lemma~\ref{lem70-6} for $u\not= v$, $u,v\in{\cal C}$. To extend \eqref{eq70-13} for all $(u,v)\in\overline{\cal C}\times\overline{\cal C}$, it is now sufficient to notice that by Lemma~\ref{lem70-6},  the function $J$ is continuous everywhere on $:\overline{\cal C}\times\overline{\cal C}$ and the function $w \to \sup_{\alpha\in D}\langle\alpha, w\rangle$ is continuous on $\R^d$ (see Corollary~13.2.2 in the book of Rockafellar~\cite{R} ).  Proposition~\ref{pr7-100} is therefore proved.

\subsection{Proof of Proposition~\ref{prC-1}} 
From now on the conditions (C0'), (C1') and (C2) are assumed satisfied. Since for any $x,u\in E$ and $n\geq 1$, 
\[
Q(x, x + nu) \geq Q(0, nu) \quad \text{and} \quad Q(x + nu, x) \geq Q(nu, 0),
\]
to prove Proposition~\ref{prC-1} it is sufficient to show that for any $u\in E$,
\be\label{eq70-14}
\liminf_{n\to\infty} \frac{1}{n}\log Q(0,nu) ~\geq~ 0
\ee
and
\be\label{eq70-15} 
\liminf_{n\to\infty} \frac{1}{n}\log Q(nu,0) ~\geq~ 0
\ee
To get these inequalities we introduce the following truncated and twisted processes.  

Recall that by  (C0'),  there are $\kappa >0$ and a finite set ${\cal E}_0\subset \supp(\mu)=\{x\in\Z^d : \mu(x) > 0\}$, such that for any $x\not= y, \; x,y\in E$, there is a sequence $x_0, x_1, \ldots, x_n\in E$ with $x_0= x$, $x_n = y$ and $n\leq \kappa |y-x|$ such that $x_j-x_{j-1} \in {\cal E_0}$ for all $j\in\{1,\ldots,n\}$. Consider  a sequence of sub probability measures $(\mu_k)$ on $\Z^d$ such that 
\begin{itemize} 
\item[--] $\mu_k(x)\not= 0$ for all $x\in {\cal E}_0$; 
\item[--] for any $x\in\Z^d$, the sequence $\mu_k(x)$ is increasing and tends to $\mu(x)$ as $k\to\infty$; 
\item[--] for any $k\geq 1$, the set $\supp(\mu_k) = \{x\in\Z^d : \mu_k(x) > 0\}$ is finite  and 
\[
\sum_{x\in\Z^d} \mu_k(x) < 1,
\]
\end{itemize} 
Then for any $k\geq 1$ the function 
\[
R_k(\alpha) ~=~ \sum_{x\in\Z^d} e^{\alpha\cdot x} \mu_k(x) 
\]
is strictly convex and finite everywhere in $\R^d$, the set $D_k = \{\alpha\in\R^d: R_k(\alpha) \leq 1\}$ is compact and the gradient 
\[
\nabla \log R_k(\alpha) = \frac{\nabla R_k(\alpha)}{R_k(\alpha))} 
\]
does not vanish on the boundary $\partial D_k = \{\alpha\in\R^d:~R_k(\alpha) = 1\}$ (see~\cite{Hennequin}).  For given $k\geq 1$ and $\alpha_k\in\partial D_k$, we put
\[
\tilde\mu_k(x) ~=~\exp(\langle\alpha_k,x\rangle)\mu_k(x), \quad x\in\Z^d, 
\]
and we introduce a truncated and twisted  random walk $(\tilde{X}_k(n))$ on $E$ with transition probabilities 
\[
\P_x(\tilde{X}_k(1) = y) ~=~ \tilde\mu_k(y-x)
\]
Denote by $\tilde{R}_k$ the generating function of $\tilde\mu_k$:
\[
\tilde{R}_k(\alpha) = \sum_{x\in\Z^d} \exp(\langle \alpha, x\rangle) \tilde\mu_k(x) 
\]
and let $\tilde{D}_k = \{\alpha\in\R^d: \tilde{R}_k(\alpha)\leq 1\}$. The hitting probabilities related to $(\tilde{X}_k(t))$ will be denoted by  
\[
\tilde{Q}_{k}(x,y) ~=~ \P_x( \tilde{X}_k(n) = y, \; \text{for some} \; n > 0), \quad x,y\in E. 
\] 
Remark that the twisted measure $\tilde\mu_k$ is stochastic because for any $x\in\Z^d$, 
\[
\sum_{x\in \Z^d} \tilde\mu_{k} (x) ~=~ \sum_{x\in\Z^d} \mu_k(x) \exp(\langle \alpha_{k} , y-x\rangle ) ~=~ R_k(\alpha_{k}) ~=~ 1.
\]
Moreover, for the twisted random walk $(\tilde{X}_k(n))$ the conditions (C0'), (C1') and (C2') of the previous subsection are clearly satisfied. Using therefore Corollary~\ref{corCC-1} we obtain
\begin{cor}\label{corCC-2}  Under the hypotheses (C0') and (C1') and (C2), for any $k\geq 1$, $u\in E$ and uniformly on $x\in E$, 
\be\label{eq70-16}
\liminf_{n\to\infty} \frac{1}{n}\log \tilde{Q}_k(0, nu) ~\geq~ - \sup_{\alpha\in \tilde{D}_k} \langle \alpha, u\rangle 
\ee
and
\be\label{eq70-17} 
\liminf_{n\to\infty} \frac{1}{n}\log \tilde{Q}_k(nu, 0) ~\geq~ - \sup_{\alpha\in \tilde{D}_k} \langle \alpha, - u\rangle 
\ee
\end{cor}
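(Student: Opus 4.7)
The plan is to apply Corollary~\ref{corCC-1} directly to the truncated and twisted random walk $(\tilde{X}_k(t))$, taking $x=0\in E$ (which belongs to $E$ by (C1')). To do so, I must verify that the walk $(\tilde{X}_k(t))$ itself satisfies the hypotheses (C0'), (C1'), (C2') of that corollary, with the role of $\mu,R,D$ played by $\tilde\mu_k,\tilde R_k,\tilde D_k$; the conclusion then reads precisely as the inequalities \eqref{eq70-16}--\eqref{eq70-17}, because Corollary~\ref{corCC-1} applied to $(\tilde X_k(t))$ gives
\[
\liminf_{n\to\infty}\tfrac{1}{n}\log\tilde Q_k(x,x+nu)\geq -\sup_{\alpha\in\tilde D_k}\langle\alpha,u\rangle
\]
uniformly in $x\in E$ (and similarly for $\tilde Q_k(x+nu,x)$), which specialises at $x=0$ to the desired bounds.

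First I would check (C1'), which is trivial: the state space of $(\tilde X_k(t))$ is still $E=\overline{\cal C}\cap\Z^d$, so (C1') carries over unchanged. Next, for (C2'), I use the relation $\tilde R_k(\alpha)=R_k(\alpha+\alpha_k)$ that follows from the definition of the exponential change of measure. Since $\mu_k$ has finite support, $R_k$ is finite and real-analytic on all of $\R^d$; hence $\tilde R_k$ is finite everywhere on $\R^d$, in particular in a neighbourhood of $\tilde D_k$. For the non-centering condition I compute
\[
\sum_{x\in\Z^d}x\,\tilde\mu_k(x)=\sum_{x\in\Z^d}x\,e^{\langle\alpha_k,x\rangle}\mu_k(x)=\nabla R_k(\alpha_k),
\]
and since $\alpha_k\in\partial D_k$ and the gradient $\nabla R_k$ does not vanish on $\partial D_k$ (recalled in the paragraph preceding the definition of $\tilde\mu_k$), this mean is non-zero. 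Hence (C2') holds.

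The main verification is (C0'). The walk $(\tilde X_k(t))$ is stochastic on $\Z^d$ because $\sum_x\tilde\mu_k(x)=R_k(\alpha_k)=1$; once killed on exiting $E$ it becomes substochastic on $E$. Transience of $(\tilde X_k(t))$ on $\Z^d$ follows from the fact that it has exponential moments and a non-zero drift $\nabla R_k(\alpha_k)\neq 0$, so by the law of large numbers $|\tilde X_k(t)|\to\infty$ almost surely; killing on exit from $E$ preserves transience. For the communication condition I use the same finite set ${\cal E}_0$ and constant $\kappa$ as in (C0') for the original walk: by construction of the truncating sequence, $\mu_k(x)>0$ for every $x\in{\cal E}_0$, and therefore $\tilde\mu_k(x)=e^{\langle\alpha_k,x\rangle}\mu_k(x)>0$ for every $x\in{\cal E}_0$. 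The same admissible paths used for $(X(t))$ are then admissible for $(\tilde X_k(t))$, so the communication condition of (C0') transfers verbatim.

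With (C0'), (C1'), (C2') established for $(\tilde X_k(t))$, Corollary~\ref{corCC-1} applies and yields \eqref{eq70-16}--\eqref{eq70-17} upon specialisation at $x=0$. The only delicate point is the verification of the non-centering part of (C2') and the transience part of (C0'); both rely essentially on the fact that $\alpha_k$ is chosen on the boundary $\partial D_k$ rather than in the interior, so that $\nabla R_k(\alpha_k)\neq 0$. The remainder is a routine translation of hypotheses through the exponential change of measure.
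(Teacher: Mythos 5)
Your proposal is correct and follows exactly the paper's route: the paper simply asserts that the twisted truncated walk $(\tilde X_k(t))$ "clearly" satisfies (C0'), (C1') and (C2') and then invokes Corollary~\ref{corCC-1}, which is precisely what you do, except that you actually carry out the verification (finite support of $\mu_k$ gives finiteness of $\tilde R_k=R_k(\cdot+\alpha_k)$, the mean $\nabla R_k(\alpha_k)\neq 0$ since $\alpha_k\in\partial D_k$, positivity of $\tilde\mu_k$ on ${\cal E}_0$, and transience from the nonzero drift). Your spelled-out checks are all sound, so this is a complete version of the paper's own argument.
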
 
\noindent
Remark now that 
\[
\tilde{R}_k(\alpha) = \sum_{x\in\Z^d} \exp(\langle \alpha + \alpha_k, x\rangle) \mu_k(x) ~=~ R_k(\alpha +\alpha_k),
\]
\[
\tilde{D}_k =  \{\alpha\in\R^d: R_k(\alpha+\alpha_k)\leq 1\} = \{\alpha\in\R^d :~R_k(\alpha)\leq 1\} - \alpha_k = D_k - \alpha_k.
\]
and, by definition of the process $(\tilde{X}(t))$, for any $x,y\in E$,
\[
\tilde{Q}_k(x, y) ~\leq~ \exp(\langle \alpha_k, y-x\rangle) Q(x,y).
\] 
Hence, using \eqref{eq70-16} we conclude that for any $u\in E$ and $k\geq 1$, 
\be\label{eq70-18}
\liminf_{n\to\infty} \frac{1}{n}\log Q(0, nu) ~\geq~ - \sup_{\alpha\in D_k} \langle \alpha, u\rangle. 
\ee
Recall now that by construction, the sequence of functions $R_k$ is increasing and by monotone convergence theorem, $R_k(\alpha)\to R(\alpha)$ as $k\to\infty$ for any $\alpha\in\R^d$. 
The sequence of compact sets $D_k$ is therefore decreasing and 
\[
\bigcap_{k\geq 1} D_k ~=~ \{\alpha\in\R^d : R(\alpha) \leq 1\}.
\]
Since by (C2), $\{\alpha\in\R^d : R(\alpha) \leq 1\} = \{0\}$, letting $k\to\infty$ in \eqref{eq70-18} one gets \eqref{eq70-14}. The proof of \eqref{eq70-15} is quite similar and uses the estimates \eqref{eq70-17}. 
Proposition~\ref{prC-1} is therefore proved.

\end{document}